\documentclass[12pt]{amsart}%
\usepackage{color}
\usepackage[right=2.7cm,left=2.7cm,top=3.8cm,bottom=3.5cm]{geometry}

\usepackage{float}
\usepackage[pdftex]{graphicx}
\usepackage{amsmath}
\usepackage{amsfonts}
\usepackage{bbold}
\usepackage{dsfont}
\usepackage{amssymb}
\usepackage{flafter}
\usepackage{float}
\usepackage{amsfonts}
\usepackage[pdftex]{graphicx}
\usepackage{amsmath}
\usepackage{amsfonts}
\usepackage{dsfont}
\usepackage{amssymb}
\usepackage{flafter}
\usepackage[pdffitwindow=false, pdfview=FitH, pdfstartview=FitH,
plainpages=false]{hyperref}%
\setcounter{MaxMatrixCols}{30}
\providecommand{\U}[1]{\protect \rule{.1in}{.1in}}
\newtheorem{theorem}{Theorem}[section]
\newtheorem{lemma}[theorem]{Lemma}
\newtheorem{proposition}[theorem]{Proposition}
\theoremstyle{definition}
\newtheorem{definition}[theorem]{Definition}

\theoremstyle{remark}
\newtheorem{remark}[theorem]{Remark}

\numberwithin{equation}{section}
\newcommand{\iii}{{\, \vert\kern-0.25ex\vert\kern-0.25ex\vert\, }}

\begin{document}
\thispagestyle{empty}
\title[stabilization of a locally transmission problems of two wave systems]{stabilization of a locally transmission problems of two strongly-weakly coupled wave systems }

\author{Wafa Ahmedi}
\address{ UR Analysis and Control of PDE's, UR 13ES64\\ Higher School of Sciences and Technology of Hammam Sousse, University of Sousse, Tunisia.}
\email{ahmadisouad53@gmail.com}

\author{Akram Ben Aissa}
\address{ Lab Analysis and Control of PDEs, LR22ES03\\ Higher Institute of transport and Logistics of Sousse, University of Sousse,  Tunisia.}
\email{akram.benaissa@fsm.rnu.tn}

\date{}
\begin{abstract}
In this paper, we embark on a captivating exploration of the stabilization of locally transmitted problems within the realm of two interconnected wave systems. To begin, we wield the formidable Arendt-Batty criteria\cite{AW} to affirm the resolute stability of our system. Then, with an artful fusion of a frequency domain approach and the multiplier method, we unveil the exquisite phenomenon of exponential stability, a phenomenon that manifests when the waves of the second system synchronize their propagation speeds. In cases where these speeds diverge, our investigation reveals a graceful decay of our system's energy, elegantly characterized by a polynomial decline at a rate of $t^{-1}$.

\end{abstract}

\subjclass[2010]{35Q74,  93D15, 93D15, 74J30}
\keywords{ Transmission problems, coupled wave equation, strong stability, exponential stability, polynomial stability, frequency domain approach.}

\maketitle

\section{Introduction}
    When a vibrating source disturbs the first particular of a medium, a wave is created. This phenomena begins to travel from particle to particle along the medium, which is typically modelled by a wave equation. In order to suppress those vibrations, the most common approach is adding damping.\\
    Over the past few years, the stabilization of wave systems (simple or coupled) with localized damping has attracted the attention of many authors, in the one dimensional case: (see \cite{Lk}, \cite{Lz}, \cite{hs}, \cite{AHh}, \cite{A}, \cite{Aa}, \cite{IB}, \cite{AG} ), see also (\cite{Abn}, \cite{AG} ). In this two papers, it was proved that the smoothness of the damping and coupling coefficients play a crucial role in the stability and regularity of the solution of the studied system. Especially, in \cite{Abn}, Akil, Badawi and Nicaise studied the stability of locally coupled wave equations with local Kelvin-Voigt damping when the supports of damping and coupling coefficients are disjoint. They proved that the energy of thier system decays polynomially with rates $t^{-1}$ and $t^{-4}$ in the differents cases.
    On the other hand, there are many publications in the multi-dimensional setting (see \cite{HAS}, \cite{Awy}, \cite{Aak},\cite{Aaw}, \cite{Te2}, \cite{NN}, \cite{Naj}, \cite{Nass}, \cite{lab1}, \cite{lab2}, \cite{AA}, \cite{AAI}, \cite{Amm}, \cite{exp}, \cite{f1}, \cite{f2}, \cite{f3}). Our purpose in this work is to study a more general problem. But, before stating our main contributions, let us start by
    recalling some previous results for such systems. In 2020, S.Gerbi et al. in \cite{SCA} proved the exponential decay rate of solution when the waves propagate with equal speeds, the coupling region is a subset of the damping region and satisfies the geometric control condition GCC and that the damping and the coupling coefficients are in $W^{1,\infty}(\Omega)$. In the same direction, recently, Wehbe, Ibtissam and Akil in \cite{AIW}, showed that the energy of the smooth solutions of the system decays polynomially of type $t^{-1}$, by considering that both the damping and the coupling coefficients are non smooth. Then, in \cite{AAI}, they  generalize this work to a multidimensional case and they study the stability of the system under several
    geometric control conditions. they establish polynomial stability when there is an intersection between the damping and the coupling regions. Also, when the coupling region is a subset of the damping region and under Geometric Control Condition GCC.\\
     
    Within the intricate weave of this paper, our focus converges on an intriguing question: What enigmatic qualities define the stability of our transmission problems (\ref{pb})?. Indeed, this problem involves two wave systems: the first one is weakly coupled and the second system is strongly coupled with non smooth coefficients.
    To the best of our knowledge, it seems that no result in the literature exists concerning our problem (\ref{pb}), especially in the one dimensional case. The goal of the present paper is to fill this gap by studying the stability of the following locally transmission problem:
\begin{equation}\label{pb}
\left \{
\begin{aligned}
&\displaystyle u_{tt}- a_{1}  u_{xx}+d_{1}(x) u_{t} +c_{1}(x)y=0, &(x,t) \in &\; (0,L_{0}) \times \mathbb{R}^{*}_{+},\\
&\displaystyle y_{tt}- y_{xx}+c_{1}(x)u=0, &(x,t) \in &\; (0,L_{0}) \times \mathbb{R}^{*}_{+},\\
&\displaystyle \varphi_{tt}- a_{2}  \varphi_{xx}+d_{2}(x) \varphi_{t} +c_{2}(x) \psi_{t}=0, &(x,t) \in &\; (L_{0},L) \times \mathbb{R}^{*}_{+},\\
&\displaystyle \psi_{tt}- \psi_{xx}-c_{2}(x) \varphi_{t}=0, &(x,t) \in &\; (L_{0},L) \times \mathbb{R}^{*}_{+},
\end{aligned} \right.
\end{equation}
with fully Dirichlet boundary conditions,
\begin{equation}\label{bord}
u(0,t)=y(0,t)=\varphi(L,t)=\psi(L,t)=0, \;\; \;\; t \in   \mathbb{R}^{*}_{+},
\end{equation}
and the following transmission conditions,
\begin{equation}\label{tran}
\left \{
\begin{aligned}
&u(L_{0},t)=\varphi(L_{0},t),\;y(L_{0},t)=\psi(L_{0},t), &t \in &\;  \mathbb{R}^{*}_{+},\\
&a_{1}u_{x}(L_{0},t)=a_{2} \varphi_{x}(L_{0},t),\; y_{x}(L_{0},t)=\psi_{x}(L_{0},t), &t \in &\;  \mathbb{R}^{*}_{+},
\end{aligned} \right.
\end{equation}
and with the following initial data
\begin{equation}\label{init}
(u,y,\varphi,\psi,u_{t},y_{t},\varphi_{t},\psi_{t})(x,0)=(u_{0},y_{0},\varphi_{0},\psi_{0},u_{1},y_{1},\varphi_{1},\psi_{1}).
\end{equation}
where
\begin{equation}\label{d1}
d_{1}(x)=\left \{
\begin{aligned}
&d_{1} & \text{if}  \;\; x \in (\alpha_{2},\alpha_{4}) \\
&0 &\text{otherwise}
\end{aligned} \right. \;\; \;\; \;\; \;\; \;\;
c_{1}(x)=\left \{
\begin{aligned}
&c_{1} & \text{if}  \;\; x \in (\alpha_{1},\alpha_{3}) \\
&0 &\text{otherwise},
\end{aligned} \right.
\end{equation}
\begin{equation}\label{d2}
d_{2}(x)=\left \{
\begin{aligned}
&d_{2} & \text{if}  \;\; x \in (\beta_{2},\beta_{4}) \\
&0 &\text{otherwise}
\end{aligned} \right. \;\; \;\; \text{and} \;\; \;\;
c_{2}(x)=\left \{
\begin{aligned}
&c_{2} & \text{if}  \;\; x \in (\beta_{1},\beta_{3}) \\
&0 &\text{otherwise},
\end{aligned} \right.
\end{equation}
and $a_{1}, a_{2}, d_{1}, d_{2}$ are strictly positives constants, $c_{1}, c_{2} \in \mathbb{R^{*}}$, with
\begin{equation}\label{infty}
\left|  c_{1} \right| <\frac{1}{C_{0}},
\end{equation}
where $C_{0}$ denotes the Poincar$\acute{e}$ constant. More precisely, $C_{0}$ is the smallest positive constant such that
\begin{equation}
\int_{0}^{L_{0}}\left| f\right|^{2} dx \leq C_{0} \int_{0}^{L_{0}}\left| f_{x}\right|^{2} dx, \;\; \;\; \forall f \in \mathnormal{H}_{0}^{1}(0,L_{0}).
\end{equation}
We consider $0<\alpha_{1}<\alpha_{2}<\alpha_{3}<\alpha_{4}<L_{0}<\beta_{1}<\beta_{2}<\beta_{3}<\beta_{4}<L$. $\left( \text{See Figure}\; \ref{fig:1}\right) $ \\

The paper is structured as follows: First in Sect.\ref{sect}, we prove the well-posedness of our system by using semigroup approach. Then in Sect.\ref{sec3}, following a general criteria of Arendt and Batty, we show the strong stability of our problem. Next, in Sect\ref{exp}, by using the frequency domain approach combining with a specific multiplier method, we establish exponential stability of the solution if and only if the waves of the second coupled equations have the same speed of propagation (i.e., $a_{2} = 1$). In the case when $a_{2} \neq 1$, we prove that the energy of our problem decays polynomially with the rate $t^{-1}$. 
\begin{figure}
	\centering
	\includegraphics[width=0.7\textwidth]{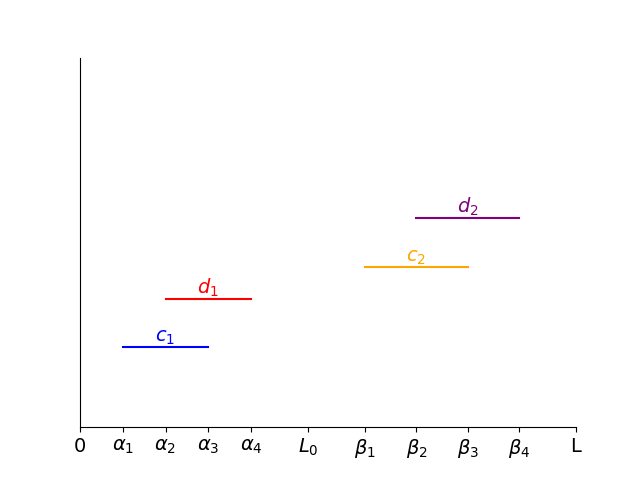}
	\caption{Geometric description of the functions d$_{1}$, d$_{2}$, c$_{1}$ and c$_{2}$ }
	\label{fig:1}
\end{figure}
\newpage
\section{Well-posedness} \label{sect}
This section is devoted to establish the well-posedness of the system (\ref{pb})-(\ref{init}) using a semi-group approach.\\
\textbf{Case1.} If $d_{1}(x)=0\; \text{in} \; (0,L_{0})$.\\
Let $(u,u_{t},y,y_{t},\varphi,\varphi_{t},\psi,\psi_{t})$ be a regular solution of the system (\ref{pb})-(\ref{init}). The energy of the system is given by
\begin{align}
E(t)&=\frac{1}{2} \int_{0}^{L_{0}}\left(  
| u_{t}|^{2}+a_{1}| u_{x}|^{2}+|y_{t}|^{2}+|y_{x}|^{2}+2 \Re \left( c_{1}(x)u\overline{y}\right) \right)dx\\ \nonumber
&+ \frac{1}{2} \int_{L_{0}}^{L}\left(  
| \varphi_{t}|^{2}+a_{2}| \varphi_{x}|^{2}+|\psi_{t}|^{2}+|\psi_{x}|^{2}\right)dx .
\end{align}
A straightforward computation gives
\begin{equation}
E'(t)=- \int_{L_{0}}^{L} d_{2}(x) |\varphi_{t}|^{2} dx \leq 0.
\end{equation}
Thus, the system (\ref{pb})-(\ref{init}) is dissipative in the sense that its energy is a non increasing function with respect to the time variable $t$. We introduce the following Hilbert spaces
$$
\mathnormal{H}^{1}_{L}(a,b)=\left\lbrace f \in \mathnormal{H}^{1}(a,b);\; f(a)=0\right\rbrace ,\;\; \;\; \mathnormal{H}^{1}_{R}(a,b)=\left\lbrace f \in \mathnormal{H}^{1}(a,b);\; f(b)=0\right\rbrace,
$$ 
for any real numbers $a, b$ such that $a < b$. The energy space $\mathcal{H}$ is now defined by 
\begin{align}
\mathcal{H}=  \Big \{ &
\left[  \mathnormal{H}_{L}^{1}(0,L_{0}) \times \mathnormal{L}^{2}(0,L_{0})\right]^{2} \times \left[  \mathnormal{H}_{R}^{1}(L_{0},L) \times \mathnormal{L}^{2}(L_{0},L)\right]^{2};\; u(L_{0})=\varphi(L_{0}) \\ \nonumber
& \text{and} \;\; \;\; y(L_{0})=\psi(L_{0})
\Big \}
\end{align}
equipped with the following norm
\begin{align*}
\|U\|^{2}_{\mathcal{H}}=&a_{1} \| u_{x}\|^{2}_{\mathnormal{L}^{2}(0,L_{0})}+\| v\|^{2}_{\mathnormal{L}^{2}(0,L_{0})}+\| y_{x}\|^{2}_{\mathnormal{L}^{2}(0,L_{0})}+\|z\|^{2}_{\mathnormal{L}^{2}(0,L_{0})} + 2 \Re \int_{0}^{L_{0}} c_{1}(x)u\overline{y} dx\\
&+ a_{2} \| \varphi_{x}\|^{2}_{\mathnormal{L}^{2}(L_{0},L)}
+\| \eta\|^{2}_{\mathnormal{L}^{2}(L_{0},L)}+\| \psi_{x}\|^{2}_{\mathnormal{L}^{2}(L_{0},L)}+\|\xi\|^{2}_{\mathnormal{L}^{2}(L_{0},L)}, 
\end{align*}
for all $U=(u,v,y,z,\varphi,\eta,\psi,\xi)^{\mathsf{T}} \in \mathcal{H} $.\\
Let $U=(u,u_{t},y,y_{t},\varphi,\varphi_{t},\psi,\psi_{t})^{\mathsf{T}}$, it's easy to see that problem (\ref{pb})-(\ref{init}) is formally equivalent to the following abstract evolution equation in the Hilbert space $\mathcal{H}$
\begin{equation}\label{koko}
\begin{cases}
U'(t) = \mathcal{A}U(t), \;\; t>0
\\
U(0)=U_{0}=(u_{0},u_{1},y_{0},y_{1},\varphi_{0},\varphi_{1},\psi_{0},\psi_{1})^{\mathsf{T}}, 
\end{cases} 
\end{equation}
and the unbounded operator $\mathcal{A}$ is defined by
\begin{equation}\label{eeq}
\mathcal{A}U=  \begin{pmatrix} v \\ 
 a_{1}  u_{xx}-c_{1}(x)y \\ z \\ y_{xx}-c_{1}(x)u\\ \eta \\ 
 a_{2}  \varphi_{xx}-c_{2}(x)\xi-d_{2}(x) \eta \\ \xi \\ \psi_{xx}+c_{2}(x) \eta  \end{pmatrix} ,
\end{equation}
for all $U=(u,v,y,z,\varphi,\eta,\psi,\xi)^{\mathsf{T}} \in \mathnormal{D}(\mathcal{A})$, with domain
\begin{align*}
\mathnormal{D}(\mathcal{A})=& \Big \{ U \in \mathcal{H} ;\; v,z  \in  \mathnormal{H}^{1}_{L}(0,L_{0}),u,y \in   \mathnormal{H}^{2}(0,L_{0})\cap \mathnormal{H}^{1}_{L}(0,L_{0}), \eta, \xi  \in  \mathnormal{H}^{1}_{R}(L_{0},L),\\
&\varphi, \psi \in   \mathnormal{H}^{2}(L_{0},L)\cap \mathnormal{H}^{1}_{R}(L_{0},L) \;\text{and}\;
  u(L_{0})=\varphi(L_{0}) , y(L_{0})=\psi(L_{0})  \Big \} .
\end{align*}
\begin{proposition}\label{po}
The unbounded linear operator $\mathcal{A}$ generates a $C_{0}$-semigroup of contractions on $\mathcal{H}$.
\end{proposition}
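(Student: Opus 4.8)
The plan is to invoke the Lumer--Phillips theorem, so it suffices to establish two facts: that $\mathcal{A}$ is dissipative on $\mathcal{H}$, and that $I-\mathcal{A}$ is surjective (equivalently, that $\mathcal{A}$ is maximal dissipative). Since $\mathcal{H}$ is a Hilbert space and $D(\mathcal{A})$ is dense in it, these two properties yield that $\mathcal{A}$ generates a $C_0$-semigroup of contractions.

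For dissipativity, I would compute $\Re\langle \mathcal{A}U,U\rangle_{\mathcal{H}}$ for $U=(u,v,y,z,\varphi,\eta,\psi,\xi)^{\mathsf T}\in D(\mathcal{A})$, using the inner product associated with the given norm (in particular the coupling term $2\Re\int_0^{L_0}c_1(x)u\bar y\,dx$). Substituting the components of $\mathcal{A}U$ from \eqref{eeq} and integrating by parts the second-order terms, the interior contributions combine and boundary terms appear at $x=0,\,L_0,\,L$. The boundary terms at $x=0$ and $x=L$ vanish thanks to the Dirichlet conditions \eqref{bord}, while the terms generated at $x=L_0$ from the two subdomains cancel pairwise by the transmission conditions \eqref{tran} (the displacement matching absorbing the trace terms and the flux matching $a_1u_x(L_0)=a_2\varphi_x(L_0)$, $y_x(L_0)=\psi_x(L_0)$ pairing the conormal derivatives). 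The weak coupling terms in $c_1$ and the skew coupling in $c_2$ drop out of the real part, and what remains is precisely $-\int_{L_0}^{L}d_2(x)|\eta|^2\,dx\le 0$, in agreement with the energy identity $E'(t)$ recorded earlier. Hence $\mathcal{A}$ is dissipative.

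For maximality, I would fix $F=(f_1,\dots,f_8)^{\mathsf T}\in\mathcal{H}$ and solve $(I-\mathcal{A})U=F$ for $U\in D(\mathcal{A})$. The first, third, fifth and seventh rows determine $v,z,\eta,\xi$ algebraically in terms of $u,y,\varphi,\psi$ (for instance $v=u-f_1$), and substituting into the remaining rows yields a coupled elliptic system of the form $u-a_1u_{xx}+c_1(x)y=g_1$, $y-y_{xx}+c_1(x)u=g_2$, and analogously on $(L_0,L)$. I would recast this weakly on the closed subspace
\[
V=\{(u,y,\varphi,\psi):\ u,y\in H^1_L(0,L_0),\ \varphi,\psi\in H^1_R(L_0,L),\ u(L_0)=\varphi(L_0),\ y(L_0)=\psi(L_0)\},
\]
and apply the Lax--Milgram theorem. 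The associated sesquilinear form is clearly continuous, and its coercivity is exactly where the hypothesis \eqref{infty} enters: the indefinite cross term $2\Re\int_0^{L_0}c_1u\bar y$ is dominated, via the Poincar\'e inequality with constant $C_0$, by the positive principal part $a_1\|u_x\|^2+\|y_x\|^2$, so the full form stays coercive on $V$. Once a weak solution is produced, elliptic regularity lifts it to $u,y\in H^2(0,L_0)$ and $\varphi,\psi\in H^2(L_0,L)$, the flux transmission conditions $a_1u_x(L_0)=a_2\varphi_x(L_0)$ and $y_x(L_0)=\psi_x(L_0)$ are recovered as natural conditions from the variational identity, and therefore $U\in D(\mathcal{A})$.

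The main obstacle is the coercivity step: the $c_1$-coupling makes the sesquilinear form a priori indefinite, and one must exploit the smallness condition \eqref{infty} together with Poincar\'e's inequality to absorb this term into the gradient contributions; this is also precisely what guarantees that $\|\cdot\|_{\mathcal{H}}$ is a bona fide norm in the first place. A secondary technical point is the careful handling of the interface at $L_0$: the displacement-matching conditions must be encoded in the space $V$, the flux conditions must be extracted weakly, and all resulting boundary terms must be shown to cancel so that the recovered function genuinely belongs to $D(\mathcal{A})$.
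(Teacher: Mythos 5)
Your argument is correct and follows the same overall strategy as the paper: Lumer--Phillips, the dissipativity identity \eqref{re} (with the same cancellations at $x=L_{0}$ coming from the transmission conditions \eqref{tran}), and maximality via a variational formulation, Lax--Milgram and elliptic regularity, with the smallness condition \eqref{infty} combined with Poincar\'e's inequality securing coercivity exactly as you describe. The one genuine difference is how you organize the maximality step. The paper solves $-\mathcal{A}U=F$, i.e., it proves $0\in\rho(\mathcal{A})$: there $v,z,\eta,\xi$ are read off directly from the data ($v=-f_{1}$, $\eta=-f_{5}$, etc.), so the $c_{2}$- and $d_{2}$-terms become pure right-hand-side data, the $(\varphi,\psi)$-equations decouple into independent Dirichlet problems, and $\vartheta_{2}$ reduces to the plain Dirichlet form; surjectivity of $\lambda I-\mathcal{A}$ for small $\lambda>0$ then follows from $0\in\rho(\mathcal{A})$, and density of $D(\mathcal{A})$ is deduced a posteriori from dissipativity plus the range condition via \cite[Thm.4.6]{PA}. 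You instead solve $(I-\mathcal{A})U=F$ directly, which is the more standard entry into Lumer--Phillips and spares the perturbation step, at the cost of a slightly richer sesquilinear form: yours retains the zeroth-order terms, the $d_{2}$-term (harmless since $d_{2}\geq 0$) and the $c_{2}$-coupling on $(L_{0},L)$, which you correctly dispose of by noting it is skew and contributes nothing to the real part, so complex Lax--Milgram still applies. The two variational settings are equivalent (your space $V$ is the paper's $H^{1}_{0}(0,L)\times H^{1}_{0}(0,L)$ after gluing at $L_{0}$), and in both the flux conditions in \eqref{tran} are recovered as natural conditions. One small caution: you take the density of $D(\mathcal{A})$ as a given hypothesis, whereas it is not among the facts you verify; on a Hilbert space it follows automatically from dissipativity together with your range condition (this is precisely the paper's closing step), so you should state that implication rather than assume density outright.
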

\begin{proof}
Using Lumer-Phillips theorem (see\cite{PA}), it is sufficient to prove that $\mathcal{A}$ is a maximal
dissipative operator so that $\mathcal{A}$ generates a C$_{0}$-semigroup of contractions on $\mathcal{H}$. First, let $U=(u,v,y,z,\varphi,\eta,\psi,\xi)^{\mathsf{T}} \in \mathnormal{D}(\mathcal{A}) $. Then, integrating by parts we have
\begin{equation}\label{re}
\Re \left\langle  \mathcal{A}U,U  \right\rangle _{\mathcal{H}} =-\int_{L_{0}}^{L}  d_{2}(x) \left| \eta\right| ^{2} dx\leq 0.
\end{equation}
This implies that $\mathcal{A}$ is dissipative. Now, let us go on with maximality. Let\\ $F=(f_{1},f_{2},f_{3},f_{4},f_{5},f_{6},f_{7},f_{8})^{\mathsf{T}} \in \mathcal{H} $, we look for $U=(u,v,y,z,\varphi,\eta,\psi,\xi)^{\mathsf{T}} \in \mathnormal{D}(\mathcal{A}) $ solution of
\begin{equation}\label{inv}
-\mathcal{A}U=F.
\end{equation}
Equivalently, we have the following system
\begin{align} \label{1}
-v&=f_{1},\\ \label{2}
-a_{1} u_{xx}+c_{1}(x) y&=f_{2},\\ \label{3}
-z&=f_{3}, \\ \label{4}  
- y_{xx}+c_{1}(x)u  &=f_{4},\\ \label{5}
-\eta&=f_{5},\\ \label{6}
-a_{2} \varphi_{xx}+d_{2}(x) \eta+c_{2}(x)\xi
&=f_{6},\\ \label{7}
-\xi&=f_{7}, \\ \label{8}  
- \psi_{xx}-c_{2}(x)\eta  &=f_{8}.
\end{align}
Inserting (\ref{5}) and (\ref{7}) into (\ref{6}) and (\ref{8}), we get
\begin{equation}\label{*}
-a_{2} \varphi_{xx} =f_{6}+c_{2}(x) f_{7}+ d_{2}(x)f_{5},
\end{equation}
and
\begin{equation}\label{**}
-\psi_{xx} =f_{8}-c_{2}(x) f_{5}.
\end{equation}
Multiplying (\ref{2}), (\ref{*}) by $\overline{\varPhi_{1}} \in \mathnormal{H}^{1}_{0}(0,L)$ and  (\ref{4}), (\ref{**}) by $\overline{\varPhi_{2}} \in \mathnormal{H}^{1}_{0}(0,L)$, integrating over $(0,L)$, we get
\begin{equation}\label{eq1}
-a_{1} \overline{\varPhi_{1}}(L_{0}) u_{x}(L_{0}) + a_{1} \int_{0}^{L_{0}}  u_{x}  \left( \overline{\varPhi_{1}}\right) _{x} dx +\int_{0}^{L_{0}} c_{1}(x)y   \overline{\varPhi_{1}} dx= \int_{0}^{L_{0}}f_{2} \overline{\varPhi_{1}} dx, 
\end{equation}
\begin{equation}\label{eq2}
- \overline{\varPhi_{2}}(L_{0}) y_{x}(L_{0}) +  \int_{0}^{L_{0}}  y_{x}  \left( \overline{\varPhi_{2}}\right)_{x} dx +\int_{0}^{L_{0}} c_{1}(x)u   \overline{\varPhi_{2}} dx= \int_{0}^{L_{0}}f_{4} \overline{\varPhi_{2}} dx, 
\end{equation}
\begin{equation}\label{eq3}
a_{2} \overline{\varPhi_{1}}(L_{0}) \varphi_{x}(L_{0}) + a_{2} \int_{L_{0}}^{L}  \varphi_{x}  \left( \overline{\varPhi_{1}}\right) _{x} dx =\int_{L_{0}}^{L}\left( f_{6}+ c_{2}(x)f_{7}+d_{2}(x)f_{5}\right)    \overline{\varPhi_{1}} dx,
\end{equation}
and
\begin{equation}\label{eq4}
 \overline{\varPhi_{2}}(L_{0}) \psi_{x}(L_{0}) +  \int_{L_{0}}^{L}  \psi_{x}  \left( \overline{\varPhi_{2}}\right) _{x} dx =\int_{L_{0}}^{L}\left( f_{8}- c_{2}(x)f_{5} \right)    \overline{\varPhi_{2}} dx.
\end{equation}
Adding the above equations, we obtain
the following variational problem:
\begin{equation}\label{fo}
\Lambda\left( \left( \left( u,\varphi \right) ,\left( y, \psi \right) \right), \left( \varPhi_{1},\varPhi_{2}\right)\right)  =l(\varPhi_{1},\varPhi_{2}), \;\; \;\; \forall (\varPhi_{1},\varPhi_{2}) \in \mathnormal{H}^{1}_{0}(0,L) \times \mathnormal{H}^{1}_{0}(0,L),
\end{equation} 
where
$$
\Lambda \left( \left( \left( u, \varphi \right) ,\left( y, \psi \right) \right) ,\left( \varPhi_{1},\varPhi_{2}\right)\right)  =\vartheta_{1}\left( \left( u,y \right) ,  \left( \varPhi_{1},\varPhi_{2}\right) \right)+\vartheta_{2}\left( \left( \varphi, \psi \right) , \left( \varPhi_{1},\varPhi_{2}\right) \right),
$$
with
\begin{equation*}
\left \{
\begin{aligned}
&\vartheta_{1}\left( \left( u,y\right) ,  \left( \varPhi_{1},\varPhi_{2}\right) \right)=\int_{0}^{L_{0}} \left( a_{1}  u_{x} ( \overline{\varPhi_{1}})_{x} +y_{x} ( \overline{\varPhi_{2}})_{x}   +c_{1}(x) y \overline{\varPhi_{1}} +c_{1}(x) u \overline{\varPhi_{2}} \right) dx ,\\
&\vartheta_{2}\left( \left( \varphi,\psi\right), \left( \varPhi_{1},\varPhi_{2}\right) \right)= a_{2}\int_{L_{0}}^{L}   \varphi_{x} ( \overline{\varPhi_{1}})_{x} dx + \int_{L_{0}}^{L} \psi_{x} ( \overline{\varPhi_{2}})_{x} dx ,\\
\end{aligned} \right.
\end{equation*}
and 
\begin{align*}
l(\varPhi_{1},\varPhi_{2})= &\int_{0}^{L_{0}} \left( f_{2} \overline{\varPhi_{1}}+ f_{4} \overline{\varPhi_{2}} \right)  dx +  \int_{L_{0}}^{L} \left( f_{6} +c_{2}(x)f_{7}+d_{2}(x) f_{5}\right) \overline{\varPhi_{1}} dx \\ 
&+ \int_{L_{0}}^{L}\left(  f_{8}-c_{2}(x)f_{5}\right)  \overline{\varPhi_{2}}  dx .
\end{align*}
First, thanks to (\ref{infty}) we have that $\vartheta_{1}$ is a bilinear, continuous and coercive form on $\left( \mathnormal{H}^{1}_{L}(0,L_{0}) \times \mathnormal{H}^{1}_{L}(0,L_{0}) \right) ^{2}$. Second it's easy to see that $\vartheta_{2}$ is a bilinear, continuous and coercive form on $\left( \mathnormal{H}^{1}_{R}(L_{0},L) \times \mathnormal{H}^{1}_{R}(L_{0},L) \right) ^{2}$ and $l$ is linear continuous form on $\mathnormal{H}^{1}_{0}(L_{0}, L) \times \mathnormal{H}^{1}_{0}(L_{0},L)$. Then, using Lax-Milgram theorem, we deduce that there exists $((u,\varphi),(y, \psi)) \in \mathnormal{H}^{1}_{0}(0,L) \times \mathnormal{H}^{1}_{0}(0,L)$ unique
solution of the variational problem (\ref{fo}). By using the classical elliptic regularity we deduce
that $u ,y \in \mathnormal{H}^{2}(0,L_{0})\cap  \mathnormal{H}^{1}_{L}(0,L_{0})$ and $\varphi, \psi \in \mathnormal{H}^{2}(L_{0},L)\cap  \mathnormal{H}^{1}_{R}(L_{0},L)$. Next by setting $v = -f_{1}$, $z = -f_{3}$, $\eta = -f_{5}$ and $\xi = -f_{7}$, we deduce that 
$U = (u, v, y, z,\varphi, \eta, \psi, \xi)^{\mathsf{T}} \in \mathnormal{D}(\mathcal{A})$ is solution of (\ref{inv}). To conclude, we need to show the uniqueness of such a solution. So, let $U = (u, v, y, z,\varphi, \eta, \psi, \xi)^{\mathsf{T}} \in \mathnormal{D}(\mathcal{A})$ be a solution of (\ref{inv}) with 
$$f_{1} = f_{2} = f_{3} = f_{4} =f_{5}=f_{6}=f_{7}=f_{8}= 0.
$$
Then we directly deduce that $v = z =\eta = \xi= 0$ and therefore $(u, y)\in \left[ \mathnormal{H}^{1}_{L}(0,L_{0}) \right] ^{2}$  satisfies (\ref{fo}) with $l(\varPhi_{1},\varPhi_{2})=0$ and $(\varphi, \psi)\in \left[ \mathnormal{H}^{1}_{R}(L_{0},L) \right] ^{2}$  satisfies (\ref{fo}) with $l(\varPhi_{1},\varPhi_{2})=0$. As $\vartheta_{1}$, $\vartheta_{2}$ are two sesquilinears, continuous coercive forms, we deduce that 
$$
u = y = \varphi = \psi = 0.
$$
In other words, $\ker\left( \mathcal{A}\right) =\left\lbrace 0\right\rbrace $. Consequently, we get $U = (u,- f_{1}, y,- f_{3},\varphi,- f_{5}, \psi,- f_{7})^{\mathsf{T}} \in \mathnormal{D}(\mathcal{A}) $ is  a unique solution of (\ref{inv}).\\

 Since $0\in \rho (\mathcal{A})$ the resolvent set of $\mathcal{A}$ we easily get  $\mathnormal{R} \left( \lambda I - \mathcal{A}\right) = \mathcal{H}$ for a sufficiently small $\lambda > 0$ (see \cite[Thm.1.2.4]{ZZ}). This, together
with the dissipativeness of $\mathcal{A}$, imply that $\mathnormal{D}(\mathcal{A})$ is dense in $\mathcal{H}$ (see\cite[Thm.4.6]{PA}). Then $\mathcal{A}$ is m-dissipative in
$\mathcal{H}$.
\end{proof}
As $\mathcal{A}$ generates a $C_{0}$-semigroup of contractions $\left(e^{t\mathcal{A}} \right)_{t\geq 0}$, we have the following result:
\begin{theorem} $\left( \text{Existence and uniqueness of the solution}\right) $.
	\begin{enumerate}
		\item If $ U_{0} \in \mathnormal{D}(\mathcal{A})$, then problem (\ref{koko}) admits a unique strong solution $U $ satisfying:
		$$
		U\in C^{1}\left( \mathbb{R}_{+},\mathcal{H}\right) \cap C^{0}\left( \mathbb{R}_{+},\mathnormal{D}(\mathcal{A})\right)  .
		$$
		\item If $ U_{0}  \in \mathcal{H}$, then problem (\ref{koko}) admits a unique weak solution $U$ satisfying:
		$$
		U\in C^{0}\left( \mathbb{R}_{+},\mathcal{H}\right) .
		$$
		\end{enumerate}
\end{theorem}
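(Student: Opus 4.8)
The plan is to treat this theorem as an immediate corollary of Proposition \ref{po}, since all the analytic substance has already been expended in proving that $\mathcal{A}$ is m-dissipative and hence, by Lumer--Phillips, generates a $C_0$-semigroup of contractions $(e^{t\mathcal{A}})_{t\ge 0}$ on $\mathcal{H}$. The abstract Cauchy problem (\ref{koko}) then has the explicit solution $U(t)=e^{t\mathcal{A}}U_0$, and both assertions are the standard regularity dichotomy of linear semigroup theory (see, e.g., \cite[Thm.~4.6]{PA} or the corresponding results in \cite{ZZ}). So the write-up will mostly consist of invoking the generation theorem and reading off the regularity appropriate to where $U_0$ lives.

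For part (1), I would argue as follows. Assume $U_0\in D(\mathcal{A})$. A classical theorem on $C_0$-semigroups states that if the generator's domain contains the initial datum, then the orbit $t\mapsto e^{t\mathcal{A}}U_0$ is continuously differentiable on $\mathbb{R}_+$, takes values in $D(\mathcal{A})$ for every $t\ge 0$, and satisfies $\tfrac{d}{dt}e^{t\mathcal{A}}U_0=\mathcal{A}\,e^{t\mathcal{A}}U_0$ with $e^{0\cdot\mathcal{A}}U_0=U_0$. This yields precisely a strong solution of (\ref{koko}) in the regularity class $C^1(\mathbb{R}_+,\mathcal{H})\cap C^0(\mathbb{R}_+,D(\mathcal{A}))$, where $D(\mathcal{A})$ is understood with its graph norm. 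The derivative identity is exactly $U'(t)=\mathcal{A}U(t)$, so no further computation beyond citing the generation result is needed.

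For part (2), I would take $U_0\in\mathcal{H}$ arbitrary and define the mild solution $U(t)=e^{t\mathcal{A}}U_0$. Strong continuity of the semigroup gives at once that $t\mapsto U(t)$ is continuous from $\mathbb{R}_+$ into $\mathcal{H}$, i.e. $U\in C^0(\mathbb{R}_+,\mathcal{H})$; the contraction property also gives the a priori bound $\|U(t)\|_{\mathcal{H}}\le\|U_0\|_{\mathcal{H}}$, consistent with the dissipativity (\ref{re}) and the energy identity. Uniqueness in both cases is automatic from the semigroup representation: any two solutions with the same initial datum coincide because the generator determines the semigroup uniquely.

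The honest remark is that there is no genuine obstacle left to overcome here — the difficulty was entirely front-loaded into establishing maximal dissipativity in Proposition \ref{po}, in particular the Lax--Milgram step for the variational problem (\ref{fo}) that used the smallness condition (\ref{infty}) on $c_1$ to secure coercivity of $\vartheta_1$. Once that generation result is in hand, the present theorem is a textbook application, and the only thing I would double-check is that the graph-norm topology on $D(\mathcal{A})$ is the one meant in the statement $U\in C^0(\mathbb{R}_+,D(\mathcal{A}))$, so that the regularity claim is stated in the correct space.
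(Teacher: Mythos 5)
Your proposal is correct and matches the paper exactly: the paper states this theorem without proof, presenting it as the standard consequence of Proposition \ref{po} via classical semigroup theory (as in \cite{PA}), which is precisely the route you take. Your added remarks on the graph norm on $\mathnormal{D}(\mathcal{A})$ and the contraction bound are sound but not needed beyond the textbook citation.
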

\section{Strong stability} \label{sec3}
Now the following result is about the strong stability of system (\ref{pb})-(\ref{init})
\begin{theorem}\label{pap}
The $C_{0}$-semigroup of contractions $\left(e^{t\mathcal{A}} \right)_{t\geq 0}$ is strongly stable in the energy space $\mathcal{H}$ in the sense that
$$
\lim_{t\to +\infty}\|e^{t\mathcal{A}}U_{0}\|_{\mathcal{H}}=0,\quad
\forall U_{0}\in \mathcal{H} .
$$
\end{theorem}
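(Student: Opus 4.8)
The plan is to invoke the Arendt--Batty criterion \cite{AW}: since $(e^{t\mathcal{A}})_{t\ge 0}$ is a bounded (indeed contraction) $C_0$-semigroup on the Hilbert space $\mathcal{H}$, it is strongly stable as soon as (i) $\mathcal{A}$ has no eigenvalue on the imaginary axis, i.e. $\sigma_p(\mathcal{A})\cap i\R=\emptyset$, and (ii) $\sigma(\mathcal{A})\cap i\R$ is at most countable. Condition (ii) comes essentially for free: the domain $D(\mathcal{A})$ is built from $H^2$- and $H^1$-spaces on the bounded intervals $(0,L_0)$ and $(L_0,L)$, so by the Rellich--Kondrachov theorem $D(\mathcal{A})$ embeds compactly into $\mathcal{H}$; since $0\in\rho(\mathcal{A})$ by Proposition \ref{po}, the resolvent $\mathcal{A}^{-1}$ is compact and hence $\sigma(\mathcal{A})$ consists only of isolated eigenvalues of finite multiplicity, a countable set. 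Thus everything reduces to proving (i), which moreover upgrades to $i\R\subset\rho(\mathcal{A})$.

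To prove (i) I would argue by contradiction: suppose $i\lambda\in\sigma_p(\mathcal{A})$ for some $\lambda\in\R$ with eigenvector $U=(u,v,y,z,\varphi,\eta,\psi,\xi)^{\mathsf T}\in D(\mathcal{A})\setminus\{0\}$. The case $\lambda=0$ is excluded since $0\in\rho(\mathcal{A})$, so $\lambda\ne 0$. Taking the real part of $\langle\mathcal{A}U,U\rangle_{\mathcal{H}}=i\lambda\|U\|_{\mathcal{H}}^{2}$ and using the dissipation identity \eqref{re} forces $\int_{L_0}^{L}d_2(x)|\eta|^2\,dx=0$, whence $\eta\equiv 0$ on the support $(\beta_2,\beta_4)$ of $d_2$. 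Since the eigenvalue relations give $v=i\lambda u$, $z=i\lambda y$, $\eta=i\lambda\varphi$, $\xi=i\lambda\psi$ and $\lambda\ne 0$, this yields $\varphi\equiv 0$ on $(\beta_2,\beta_4)$.

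Next I would propagate this vanishing. Feeding $\varphi\equiv 0$ and $\eta\equiv 0$ into the sixth eigenvalue equation $a_2\varphi_{xx}-c_2(x)\xi-d_2(x)\eta=-\lambda^2\varphi$ leaves $c_2(x)\xi\equiv 0$ on $(\beta_2,\beta_4)$; on the overlap $(\beta_2,\beta_3)$ one has $c_2\ne 0$, so $\xi\equiv 0$ and therefore $\psi\equiv 0$ there. Hence $\varphi,\psi$ together with their first derivatives (by $H^2\subset C^1$ regularity) vanish on the nonempty open interval $(\beta_2,\beta_3)$. On each of the subintervals $(L_0,\beta_1),(\beta_1,\beta_2),(\beta_2,\beta_3),(\beta_3,\beta_4),(\beta_4,L)$ the pair $(\varphi,\psi)$ solves a second-order linear system with constant coefficients (the coefficients jump only at the $\beta_i$, where $\varphi,\varphi_x,\psi,\psi_x$ remain continuous), so Cauchy uniqueness for linear ODEs spreads the zero Cauchy data from $(\beta_2,\beta_3)$ across all interfaces and gives $\varphi\equiv\psi\equiv 0$ on the whole of $(L_0,L)$.

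Finally the transmission conditions \eqref{tran} transfer this to the first system: $u(L_0)=\varphi(L_0)=0$, $y(L_0)=\psi(L_0)=0$, $a_1u_x(L_0)=a_2\varphi_x(L_0)=0$ and $y_x(L_0)=\psi_x(L_0)=0$, so $(u,y)$ has vanishing Cauchy data at $L_0$. Since $(u,y)$ solves the constant-coefficient system $a_1u_{xx}-c_1(x)y=-\lambda^2u$, $y_{xx}-c_1(x)u=-\lambda^2y$ on each subinterval of $(0,L_0)$ cut out by the jumps of $c_1$, Cauchy uniqueness again forces $u\equiv y\equiv 0$ on $(0,L_0)$. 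Hence $U=0$, a contradiction, so $\mathcal{A}$ has no imaginary eigenvalue, $i\R\subset\rho(\mathcal{A})$, and Arendt--Batty yields the claim. The main difficulty is precisely this propagation step: one must use the nesting $0<\alpha_1<\cdots<L_0<\beta_1<\cdots<L$ to guarantee that the overlap $(\beta_2,\beta_3)$ of the damping and coupling regions in the second system is nonempty --- this is exactly what lets the single damping acting on $\varphi$ also annihilate $\psi$ --- and then to chain the ODE-uniqueness arguments through all interior interfaces and across the transmission point $L_0$ without losing the $C^1$-matching that Cauchy uniqueness requires.
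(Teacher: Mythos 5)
Your proposal is correct and follows essentially the same route as the paper: Arendt--Batty with compact resolvent, the dissipation identity killing $\eta$ (hence $\varphi$) on the damping region, the overlap $(\beta_{2},\beta_{3})$ with the coupling region annihilating $\psi$, and then propagation of zero Cauchy data across $(L_{0},L)$ and, via the transmission conditions at $L_{0}$, across $(0,L_{0})$. The only cosmetic difference is that you chain Cauchy uniqueness for the piecewise-constant ODE systems through each interface, whereas the paper writes the same step as a first-order system solved by a matrix exponential on $(L_{0},\beta_{2})$ and $(\alpha_{1},\alpha_{3})$ together with a Holmgren-type argument on the outer intervals.
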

\begin{proof}
Since the resolvent of $\mathcal{A}$ is compact in $\mathcal{H}$, it follows from the Arendt-Batty's theorem (see\cite{AW}) that the system (\ref{pb})-(\ref{init}) is strongly stable if and only if $\mathcal{A}$ does not have pure imaginary eigenvalues, i.e. $\sigma(A) \cap \mathrm{i} \mathbb{R}=\emptyset$.
From Proposition\ref{po}, we have that $0 \in \rho (\mathcal{A}) $. Therefore, only $\sigma(A) \cap \mathrm{i} \mathbb{R^{*}}=\emptyset$ must be proved. For this purpose, suppose that there exists a real number $\lambda \neq 0$ and $U = (u, v, y, z,\varphi, \eta, \psi, \xi)^{\mathsf{T}} \in \mathnormal{D}(\mathcal{A})$ such that 
\begin{equation}\label{eg}
\mathcal{A}U = \mathrm{i}\lambda U.	
\end{equation}	
From (\ref{re}) and (\ref{eg}), we have
\begin{equation}\label{egg}
0=\Re\left( \mathrm{i}\lambda \|U\|^{2}_{\mathcal{H}}\right)=\Re \left( \langle \mathcal{A}U,U \rangle _{\mathcal{H}}\right)=-\int_{L_{0}}^{L}  d_{2}(x) \left|  \eta \right|^{2} dx.	
\end{equation}
Condition  (\ref{d2}) implies that
\begin{equation}\label{t}
\sqrt{d_{2}} \eta =0\; \text{in}\; (L_{0},L)  \;\; \;\; \text{and} \;\; \;\;  \eta =0  \; \text{in} \;  (\beta_{2},\beta_{4}).
\end{equation}
Detailing (\ref{eg}) and using (\ref{t}), we get the following equations
\begin{equation}\label{moo}
v=\mathrm{i}\lambda u\; \text{in} \; (0,L_{0}),\;\; \;\; z=\mathrm{i}\lambda y\; \text{in} \; (0,L_{0}),\;\; \;\; \eta=\mathrm{i}\lambda \varphi\; \text{in} \; (L_{0},L),\;\; \;\; \xi=\mathrm{i}\lambda \psi\; \text{in} \; (L_{0},L),
\end{equation}
and
\begin{equation}\label{mu}
\left \{
\begin{aligned}
&\displaystyle \lambda^{2}u+ a_{1}  u_{xx}-c_{1}(x)y=0, &x \in &\; (0,L_{0}) ,\\
&\displaystyle \lambda^{2}y+   y_{xx}-c_{1}(x)u=0, &x \in &\; (0,L_{0}) ,\\
&\displaystyle \lambda^{2}\varphi+ a_{2}  \varphi_{xx}-\mathrm{i} \lambda c_{2}(x)\psi=0, &x \in &\; (L_{0},L) ,\\
&\displaystyle \lambda^{2} \psi+  \psi_{xx} +\mathrm{i} \lambda c_{2}(x)\varphi=0, &x \in &\; (L_{0},L) .
\end{aligned} \right.
\end{equation}
Our goal is to prove that $u = y =0 \; \text{in} \; (0, L_{0})$ and $ \varphi = \psi= 0 \; \text{in} \; (L_{0}, L)$. For simplicity, we divide the proof into two steps.\\
\textbf{Step 1.} The aim of this step is to show that $\varphi =\psi = 0 \; \text{in}\; (L_{0},L)$. So, using (\ref{t}) and the third equation in (\ref{moo}), we have
\begin{equation}\label{dam}
\varphi=0 \;\; \;\; \text{in} \;\; \;\;  (\beta_{2},\beta_{4}).
\end{equation} 
From (\ref{mu})$_{3}$, (\ref{d2}) and the above equation, we get 
\begin{equation}\label{ml}
\psi = 0 \;\; \;\; \text{in} \;\; \;\; (\beta_{2},\beta_{3}).
\end{equation}
Using the above result and (\ref{dam}), we have $\varphi=\psi=0 \;  \text{in}  \; (\beta_{2},\beta_{3})$. Since $\varphi, \psi \in \mathnormal{H}^{2}(L_{0},L) \subset C^{1}([\beta_{2},\beta_{3}])$, then
\begin{equation}
\varphi(\zeta)=\varphi_{x}(\zeta)=\psi(\zeta)=\psi_{x}(\zeta)=0,\;\; \;\; \forall \zeta \in \left\lbrace \beta_{2},\beta_{3}\right\rbrace . 
\end{equation}
Let $V_{1}= (\varphi, \varphi_{x}, \psi, \psi_{x})^{\mathsf{T}}$, the following system
\begin{equation}\label{muo}
\left \{
\begin{aligned}
&\displaystyle \lambda^{2}\varphi+ a_{2}  \varphi_{xx}-\mathrm{i} \lambda c_{2}(x)\psi=0, &x \in &\; (L_{0},\beta_{2}) ,\\
&\displaystyle \lambda^{2}\psi+  \psi_{xx}+\mathrm{i} \lambda c_{2}(x)\varphi=0, &x \in &\; (L_{0},\beta_{2}) ,\\
&\displaystyle \varphi(\beta_{2})=\varphi_{x}(\beta_{2})=\psi(\beta_{2})=\psi_{x}(\beta_{2})=0,
\end{aligned} \right. 
\end{equation}
can be written as
\begin{equation}\label{kokoo}
\begin{cases}
\left( V_{1}\right)_{x} = \mathnormal{B}_{1}V_{1} \;\; \;\; \text{in} \;\; \;\;(L_{0},\beta_{2}),
\\
V_{1}\left( \beta_{2}\right)=0 , 
\end{cases} 
\end{equation}
where 
\begin{equation*}
\mathnormal{B}_{1}=\begin{pmatrix}
0&&&1&&&0&&&0  \\
\frac{-\lambda^{2}}{a_{2}}&&&0&&&\frac{\mathrm{i}\lambda}{a_{2}}c_{2}&&&0\\
0&&&0&&&0&&&1\\
-\mathrm{i}\lambda c_{2}&&&0&&&-\lambda^{2}&&&0
\end{pmatrix} .
\end{equation*}
The solution of the differential equation (\ref{kokoo}) is given by
\begin{equation}
V_{1}(x)=e^{\mathnormal{B}_{1}\left( x-\beta_{2}\right) }V_{1}\left( \beta_{2}\right) =0 \;\; \;\; \text{in} \;\; \;\; \left(L_{0},\beta_{2}\right) . 
\end{equation}
Then,
\begin{equation}\label{wa}
\varphi=\psi=0 \;\; \;\; \text{in} \;\; \;\;  (L_{0},\beta_{2}).
\end{equation}
Now, we still need to prove that $\varphi=0 \; \text{in} \; (\beta_{4},L)$ and $\psi=0 \; \text{in} \; (\beta_{3},L)$. Using (\ref{dam}), (\ref{ml}), the fact that $\varphi \in  C^{1}([\beta_{2},\beta_{4}])$, $\psi \in  C^{1}([\beta_{2},\beta_{3}])$ and the equations (\ref{mu})$_{3}$ and (\ref{mu})$_{4}$, we get the following systems:
\begin{equation}\label{muu}
\left \{
\begin{aligned}
&\displaystyle \lambda^{2}\varphi+ a_{2}  \varphi_{xx}=0, &x \in &\; (\beta_{4},L) ,\\
&\displaystyle \varphi(\beta_{4})=\varphi_{x}(\beta_{4})=\varphi(L)=0,
\end{aligned} \right. 
\end{equation}
and
\begin{equation}\label{muuo}
\left \{
\begin{aligned}
&\displaystyle \lambda^{2}\psi+ \psi_{xx}=0, &x \in &\; (\beta_{3},L) ,\\
&\displaystyle \psi(\beta_{3})=\psi_{x}(\beta_{3})=\psi(L)=0.
\end{aligned} \right. 
\end{equation}
Then, using Holmgren uniqueness theorem, we get
\begin{equation}
\varphi=0 \;\;  \text{in}  \;\; (\beta_{4},L) \;\; \;\; \text{and} \;\; \;\;  \psi=0\;\;  \text{in}  \;\; (\beta_{3},L).
\end{equation} 
Hence from (\ref{dam}), (\ref{ml}), (\ref{wa}) and the above result, we obtain
\begin{equation}\label{dou}
\varphi=\psi=0 \;\; \;\; \text{in} \;\; \;\; (L_{0},L).
\end{equation}
\textbf{Step2.} The aim of this step is to show that $u = y = 0 \; \text{in} \; (0,L_{0})$.
From (\ref{dou}) and the fact that $\varphi$, $\psi \in C^{1}\left( \left[ L_{0},L\right] \right) $, we have the following boundary condition
\begin{equation}
\varphi(L_{0})=\varphi_{x}(L_{0})=\psi(L_{0})=\psi_{x}(L_{0})=0.
\end{equation}
So, from (\ref{tran}) and the above equation, we obtain  
\begin{equation}\label{ff}
u(L_{0})=u_{x}(L_{0})=y(L_{0})=y_{x}(L_{0})=0. 
\end{equation}
From equations (\ref{mu})$_{1}$ and (\ref{mu})$_{2}$ and the fact that $c(x)=0 \;\;  \text{in} \;\; (\alpha_{3}, L_{0})$, we obtain the following system:
\begin{equation}\label{sy}
\left \{
\begin{aligned}
&\displaystyle \lambda^{2}u+ a_{1}  u_{xx} =0, &x \in &\; (\alpha_{3},L_{0}) ,\\
&\displaystyle \lambda^{2}y+  y_{xx} =0, &x \in &\; (\alpha_{3},L_{0}) .
\end{aligned} \right. 
\end{equation}
It is easy to see that system (\ref{sy}) admits only a trivial solution on $(\alpha_{3},L_{0})$ under the boundary condition (\ref{ff}). Then 
\begin{equation}\label{00}
u=y=0 \;\; \;\; \text{in} \;\; \;\; (\alpha_{3},L_{0}).
\end{equation}
Using the above equation and the fact that  $\varphi$, $\psi \in C^{1}\left( \left[ L_{0},L\right] \right) $, we have 
\begin{equation}\label{fff}
u(\alpha_{3})=u_{x}(\alpha_{3})=y(\alpha_{3})=y_{x}(\alpha_{3})=0. 
\end{equation}
Next, using equations (\ref{mu})$_{1}$ and (\ref{mu})$_{2}$, we get
\begin{equation}\label{muno}
\left \{
\begin{aligned}
&\displaystyle \lambda^{2}u+ a_{1}  u_{xx} - c_{1}(x)y=0, &x \in &\; (\alpha_{1},\alpha_{3}) ,\\
&\displaystyle \lambda^{2}y+  y_{xx}-c_{1}(x)u=0, &x \in &\; (\alpha_{1},\alpha_{3}) .
\end{aligned} \right. 
\end{equation}
Let $V_{2}= (u, u_{x}, y, y_{x})^{\mathsf{T}}$. From (\ref{fff}), $V_{2}(\alpha_{3})=0$. The system (\ref{muno})
can be written as the following equation
\begin{equation}\label{kookoo}
\left( V_{2}\right)_{x} = B_{2}V_{2} \;\; \;\; \text{in} \;\; \;\;(\alpha_{1},\alpha_{3}).
\end{equation}
where 
\begin{equation*}
B_{2}=\begin{pmatrix}
0&&&1&&&0&&&0  \\
\frac{-\lambda^{2}}{a_{1}}&&&0&&&\frac{c_{1}}{a_{1}}&&&0\\
0&&&0&&&0&&&1\\
 c_{1}&&&0&&&-\lambda^{2}&&&0
\end{pmatrix} .
\end{equation*}
The solution of the differential equation (\ref{kookoo}) is given by
\begin{equation}
V_{2}(x)=e^{B_{2}\left( x-\alpha_{3}\right) }V_{2}\left( \alpha_{3}\right) =0 \;\; \;\; \text{in} \;\; \;\; \left(\alpha_{1},\alpha_{3}\right) . 
\end{equation}
Then, 
\begin{equation}\label{lo}
u=y=0\;\;  \text{in}  \;\; (\alpha_{1},\alpha_{3}).
\end{equation}
Now, we still need to show that $u=y=0 \; \text{in} \; (0,\alpha_{1})$. Using the above result, the fact that $u, y \in  C^{1}([\alpha_{1},L_{0}])$ and the equations (\ref{mu})$_{1}$ and (\ref{mu})$_{2}$, we get:
\begin{equation}\label{muu}
\left \{
\begin{aligned}
&\displaystyle \lambda^{2}u+ a_{1}  u_{xx}=0, &x \in &\; (0,\alpha_{1}) ,\\
&\displaystyle u(\alpha_{1})=u_{x}(\alpha_{1})=u(0)=0,
\end{aligned} \right. 
\end{equation}
and
\begin{equation}\label{muuo}
\left \{
\begin{aligned}
&\displaystyle \lambda^{2}y+ y_{xx}=0, &x \in &\; (0,\alpha_{1}) ,\\
&\displaystyle y(\alpha_{1})=y_{x}(\alpha_{1})=y(0)=0.
\end{aligned} \right. 
\end{equation}
Again, using Holmgren uniqueness theorem, we have
\begin{equation}\label{aj}
u=y=0 \;\;  \text{in}  \;\; (0,\alpha_{1}) .
\end{equation} 
Finally, by using (\ref{moo}), (\ref{dou}), (\ref{00}), (\ref{lo}) and (\ref{aj})  we deduce that $U=0  \;\; \text{in}  \;\; (0,L)$ and we reached our disered result.
\end{proof}
\section{Exponential and polynomial stability}\label{exp}
In this section, we will study the exponential and polynomial stabilities of the system (\ref{pb})-(\ref{init}). Our main result in this part is the following theorems.
\begin{theorem}\label{port}
If $a_{2} = 1$, then the C$_{0}$-semigroup $\left( e^{t \mathcal{A}}\right) _{t\geq 0}$ is exponentially stable; i.e., there exists
constants $M\geq1$ and $\epsilon >0$ independent of $U_{0}$ such that
\begin{equation*}
	\left\| e^{t\mathcal{A}} U_{0} \right\|_{\mathcal{H}}\leq Me^{-\epsilon t} \left\| U_{0}\right\|_{\mathcal{H}} .
\end{equation*}
\end{theorem}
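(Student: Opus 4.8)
The natural engine for this theorem is the Gearhart--Huang--Pr\"uss frequency domain characterization: a contraction $C_{0}$-semigroup is exponentially stable if and only if (i) $\mathrm{i}\R \subset \rho(\mathcal{A})$ and (ii) $\sup_{\lambda \in \R}\|(\mathrm{i}\lambda I - \mathcal{A})^{-1}\|_{\mathcal{L}(\mathcal{H})} < \infty$. Condition (i) is essentially free here: the resolvent of $\mathcal{A}$ is compact, so $\sigma(\mathcal{A})$ consists only of eigenvalues, and the absence of imaginary eigenvalues established in the proof of Theorem \ref{pap}, together with $0 \in \rho(\mathcal{A})$, gives $\sigma(\mathcal{A}) \cap \mathrm{i}\R = \emptyset$, hence $\mathrm{i}\R \subset \rho(\mathcal{A})$. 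The entire weight of the proof therefore falls on the uniform resolvent bound (ii).

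To establish (ii) I would argue by contradiction. If it fails, there exist $\lambda_{n} \in \R$ and $U_{n} = (u,v,y,z,\varphi,\eta,\psi,\xi)^{\mathsf{T}} \in D(\mathcal{A})$ with $\|U_{n}\|_{\mathcal{H}} = 1$ and $F_{n} := (\mathrm{i}\lambda_{n} I - \mathcal{A})U_{n} \to 0$ in $\mathcal{H}$; writing $F_{n} = (f_{1},\dots,f_{8})^{\mathsf{T}}$ produces the eight scalar relations $\mathrm{i}\lambda u - v = f_{1}$, $\mathrm{i}\lambda v - a_{1}u_{xx} + c_{1}(x)y = f_{2}$, and so on down to $\mathrm{i}\lambda\xi - \psi_{xx} - c_{2}(x)\eta = f_{8}$. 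Because $\mathrm{i}\R \subset \rho(\mathcal{A})$ and the resolvent is norm-continuous, it is bounded on every compact subinterval of the imaginary axis, so a blow-up forces $|\lambda_{n}| \to \infty$, which I may freely assume. The objective is to contradict $\|U_{n}\|_{\mathcal{H}} = 1$ by proving $\|U_{n}\|_{\mathcal{H}} \to 0$. The first estimate is immediate from dissipativity: taking the real part of $\langle F_{n}, U_{n}\rangle_{\mathcal{H}}$ and invoking (\ref{re}) gives $\int_{L_{0}}^{L} d_{2}(x)|\eta|^{2}\,dx = \Re\langle F_{n}, U_{n}\rangle \to 0$, whence $\eta \to 0$ in $L^{2}(\beta_{2},\beta_{4})$ and, through $\mathrm{i}\lambda\varphi - \eta = f_{5}$, also $\lambda\varphi \to 0$ there.

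The core of the argument is then a multiplier analysis that propagates this localized damping to the full energy of the second system on $(L_{0},L)$. I would introduce smooth cut-off functions supported in and around $(\beta_{2},\beta_{4})$, together with the classical multipliers $h(x)\varphi_{x}$ and $h(x)\psi_{x}$ and the functions $\varphi$, $\psi$ themselves, to control $\int|\varphi_{x}|^{2}$, $\int|\psi_{x}|^{2}$, $\int|\eta|^{2}$ and $\int|\xi|^{2}$ over all of $(L_{0},L)$; the strong velocity coupling $c_{2}(x)$ between $\varphi$ and $\psi$ is what lets the damping felt by $\varphi$ feed across to $\psi$. It is exactly here that the hypothesis $a_{2} = 1$ enters: with a common wave speed the two equations share the same principal part $\partial_{t}^{2} - \partial_{x}^{2}$, so the interior and boundary terms generated by the multipliers combine without the loss of one derivative that would otherwise appear, and the estimate closes with a uniform, frequency-independent constant. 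This is precisely the mechanism that degrades to the polynomial rate $t^{-1}$ when $a_{2} \ne 1$.

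Having controlled the second system, I would transport the estimates across the interface $x = L_{0}$ using the transmission conditions (\ref{tran}): continuity of the traces and of the $a_{i}$-weighted fluxes converts bounds on $(\varphi,\psi)$ near $L_{0}$ into bounds on $(u,y)$ near $L_{0}$, supplying the Cauchy data needed to run a multiplier/energy estimate on the first system $(0,L_{0})$, which in this configuration carries no interior damping and is reached only through the interface. There the coupling is merely weak and zeroth order through $c_{1}(x)$, and the smallness assumption $|c_{1}| < 1/C_{0}$ from (\ref{infty}) guarantees that the coupling term is absorbed coercively rather than wrecking the energy identity, yielding $\int_{0}^{L_{0}}(a_{1}|u_{x}|^{2} + |v|^{2} + |y_{x}|^{2} + |z|^{2}) \to 0$. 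Collecting all pieces gives $\|U_{n}\|_{\mathcal{H}} \to 0$, the desired contradiction. I expect the decisive obstacle to be this propagation-plus-transmission step: choosing the cut-offs and weights so that every interface and boundary contribution is dominated, uniformly in $\lambda_{n}$, by the already-small damping term and the vanishing right-hand side $F_{n}$, and checking that $a_{2} = 1$ is exactly what eliminates the residual derivative loss.
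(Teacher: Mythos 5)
Your proposal follows essentially the same route as the paper: the Huang--Pr\"uss criterion combined with a contradiction argument, the dissipativity estimate localizing $\eta$ and $\lambda\varphi$ on the damped region, cut-off multiplier estimates (including the second-derivative multipliers implicit in your ``derivative loss'' discussion, which is exactly the paper's Lemma \ref{lemme3}, where the $\Re\{\mathrm{i}\lambda\,\varphi_x\overline{\psi_x}\}$-type cross terms combine into a factor $(a_2-1)$ that vanishes when $a_2=1$), propagation to all of $(L_0,L)$ via weighted multipliers vanishing at the endpoints, trace estimates transported through the transmission conditions at $x=L_0$, and a final multiplier $x\,\overline{u_x}$, $x\,\overline{y_x}$ estimate on the undamped weakly coupled system. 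The only slight difference is bookkeeping: in the paper the $c_1$-coupling terms on $(0,L_0)$ are dispatched as $O(\lambda^{-1})$ by Cauchy--Schwarz rather than absorbed via the smallness condition (\ref{infty}), which enters only through the coercivity of the $\mathcal{H}$-norm when concluding $\|U_n\|_{\mathcal{H}}=o(1)$.
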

\begin{theorem}\label{portt}
If $a_{2} \neq 1$, then there exists
 $C>0$ such that for every $U_{0} \in \mathnormal{D}(\mathcal{A})$, we have 
	\begin{equation*}
	E(t)\leq\frac{C}{t} \left\| U_{0}\right\|^{2}_{\mathnormal{D}(\mathcal{A})}, \;\; \;\; \forall t>0.
	\end{equation*}
\end{theorem}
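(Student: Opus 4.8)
The plan is to rely on the frequency-domain characterization of polynomial stability due to Borichev and Tomilov. By Theorem \ref{pap} the generator already satisfies $\sigma(\mathcal{A}) \cap i\mathbb{R} = \emptyset$, so $i\mathbb{R} \subset \rho(\mathcal{A})$, and the asserted decay $E(t) \leq Ct^{-1}\|U_0\|^2_{\mathnormal{D}(\mathcal{A})}$ is equivalent to the resolvent growth bound
\[
\limsup_{|\lambda| \to +\infty} \frac{1}{\lambda^{2}}\,\bigl\| (i\lambda I - \mathcal{A})^{-1} \bigr\|_{\mathcal{L}(\mathcal{H})} < +\infty ,
\]
the exponent $2$ on $\lambda$ matching, through $E \sim \|\cdot\|_{\mathcal{H}}^2$, the rate $t^{-1}$. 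I would argue by contradiction: if the bound failed there would be sequences $\lambda_n \in \mathbb{R}$ with $|\lambda_n| \to +\infty$ and $U_n = (u_n,v_n,y_n,z_n,\varphi_n,\eta_n,\psi_n,\xi_n)^{\mathsf{T}} \in \mathnormal{D}(\mathcal{A})$ normalized by $\|U_n\|_{\mathcal{H}} = 1$ such that $\lambda_n^{2}(i\lambda_n I - \mathcal{A})U_n =: F_n \to 0$ in $\mathcal{H}$; the goal then becomes to show $\|U_n\|_{\mathcal{H}} \to 0$, a contradiction.

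The first step extracts the damping. Taking the real part of $\langle (i\lambda_n I - \mathcal{A})U_n, U_n\rangle_{\mathcal{H}}$ and invoking the dissipation identity \eqref{re} gives
\[
\int_{L_0}^{L} d_2(x)\,|\eta_n|^{2}\,dx = \Re\bigl\langle \lambda_n^{-2}F_n, U_n\bigr\rangle_{\mathcal{H}} = o(\lambda_n^{-2}),
\]
so that $\eta_n = o(\lambda_n^{-1})$ in $L^2(\beta_2,\beta_4)$. Eliminating $v_n,z_n,\eta_n,\xi_n$ through the odd-numbered component equations ($v_n = i\lambda_n u_n - \lambda_n^{-2}f_1^n$, and so on) reduces the problem to four second-order equations, the first of which reads
\[
-\lambda_n^{2}u_n - a_1 (u_n)_{xx} + c_1(x) y_n = \lambda_n^{-2}\bigl(f_2^n + i\lambda_n f_1^n\bigr),
\]
with three analogues for $y_n,\varphi_n,\psi_n$ whose right-hand sides tend to $0$ in $L^2$. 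Combining $\eta_n = i\lambda_n\varphi_n - \lambda_n^{-2}f_5^n$ with the previous estimate then yields $\lambda_n\varphi_n \to 0$ in $L^2(\beta_2,\beta_4)$.

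The core of the proof is a multiplier argument propagating this localized smallness over the whole interval and across the interface. I would insert $\lambda_n\varphi_n \to 0$ into the $\psi_n$-equation to control $\psi_n$ on $(\beta_1,\beta_3)$ through the coupling $c_2$, and use piecewise cut-off multipliers of the form $q(x)(\varphi_n)_x$ and $q(x)(\psi_n)_x$, together with the multipliers $\varphi_n$ and $\psi_n$ themselves, to transport the estimates on $\int|\cdot_x|^2$ and $\int|\lambda_n\,\cdot\,|^2$ throughout $(L_0,L)$, picking up boundary contributions at $x = L_0$. The transmission conditions \eqref{tran} turn these interface terms into data for the left system, and a symmetric multiplier analysis on $(0,L_0)$ --- where the smallness condition \eqref{infty} on $c_1$ keeps the energy form coercive --- propagates smallness to $u_n$ and $y_n$. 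Summing the local estimates would give $\|U_n\|_{\mathcal{H}} \to 0$.

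The main obstacle, and the reason the decay is only $t^{-1}$ rather than exponential when $a_2 \neq 1$, lies in transferring the damping from $\varphi_n$ to $\psi_n$ inside the second system. Because $\varphi$ and $\psi$ propagate at the distinct speeds $\sqrt{a_2}$ and $1$, integrating the coupling term against the multipliers produces a cross contribution weighted by the mismatch $(a_2 - 1)$ which, when $a_2 \neq 1$, cannot be absorbed without sacrificing one power of $\lambda_n$; this is precisely what dictates the weight $\lambda_n^{2}$ in the resolvent bound. Tracking these powers of $\lambda_n$ through every cross term and verifying that each boundary contribution at $L_0$ and at the $\beta_i$ is dominated is the delicate bookkeeping of the argument, whereas the equal-speed case $a_2 = 1$ of Theorem \ref{port} makes the $(a_2-1)$-term vanish and delivers the uniform bound behind exponential decay.
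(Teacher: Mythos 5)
Your proposal is correct and follows essentially the same route as the paper's own proof: the Borichev--Tomilov reduction to the resolvent bound $\left\| (\mathrm{i}\lambda I-\mathcal{A})^{-1}\right\|_{\mathcal{L}(\mathcal{H})}=O(\lambda^{2})$, the contradiction sequences with the dissipation identity giving $\eta_n=o(\lambda_n^{-1})$ on $(\beta_2,\beta_4)$ and hence $\lambda_n\varphi_n\to 0$ there, the cut-off multiplier propagation to $\psi_x$ and $\xi$ with the $(a_2-1)$ cross term costing exactly one power of $\lambda_n$ (as in Lemmas \ref{lemme3} and \ref{lemme4}), and the transmission conditions feeding the multiplier estimates on $(0,L_0)$. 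You also correctly identified the exponent $\ell=2$ that the paper's proof actually uses in its Steps 1--3, even though the displayed condition (\ref{c2}) misstates it as $\ell=1$.
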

Since $\mathrm{i}\mathbb{R} \subset \rho(\mathcal{A})$ (see Sect.\ref{sec3}), according to Huang \cite{hu}, Pr$\ddot{u}$ss \cite{pr}, Borichev and Tomilov \cite{BT}, to proof Theorems\ref{port} and \ref{portt}, we still need to check if the following condition hold:
\begin{equation}\label{c2}
\sup_{\lambda \in \mathbb{R} } \frac{1}{\left| \lambda \right|^{\ell} }\left\| \left( \mathrm{i}\lambda I-\mathcal{A}\right) ^{-1}\right\| _{\mathcal{L}(\mathcal{H})}< \infty \;\;  \text{with} \;\;  \ell= \left \{
\begin{aligned}
0 & \;\; \text{for Theorem\ref{port}},
\\ 1 & \;\; \text{for Theorem\ref{portt}}. 
\end{aligned} \right.
\end{equation}
We will prove condition (\ref{c2}) by an argument of contradiction. For this purpose, suppose that (\ref{c2}) is false, then there exists  $\left\lbrace \left( \lambda_{n},U_{n}:=(u_{n},v_{n},y_{n},z_{n},\varphi_{n},\eta_{n},\psi_{n},\xi_{n})^{\mathsf{T}}\right) \right\rbrace  \subset \mathbb{R}^{*} \times \mathnormal{D}(\mathcal{A})$ with
\begin{equation}\label{hn}
\left| \lambda_{n} \right| \longrightarrow +\infty \;\; \text{and} \;\; \left\| U_{n}\right\|_{\mathcal{H}}=1 ,
\end{equation} 
such that
\begin{equation}\label{hnn}
\left( \lambda_{n}\right) ^{\ell}\left( i\lambda_{n} I -\mathcal{A}\right) U_{n}=F_{n}:=\left( f_{1,n},f_{2,n},f_{3,n},f_{4,n}, f_{5,n},f_{6,n},f_{7,n},f_{8,n}\right)^{\mathsf{T}}\longrightarrow 0 \;\; \text{in} \;\; \mathcal{H}.
\end{equation} 
For simplicity, we drop the index n. Equivalently, from (\ref{hnn}), we have
\begin{align} \label{lpo}
\mathrm{i}\lambda u-v&=\lambda^{-\ell}f_{1}\longrightarrow 0\;\; \text{in} \;\; \mathnormal{H}_{L}^{1}(0,L_{0}),\\ \label{ap}
\mathrm{i}\lambda v-a_{1}  u_{xx}+c_{1}(x) y &=\lambda^{-\ell}f_{2}\longrightarrow 0\;\; \text{in} \;\; \mathnormal{L}^{2}(0,L_{0}),\\ \label{fp}
\mathrm{i}\lambda y-z&=\lambda^{-\ell}f_{3}\longrightarrow 0\;\; \text{in} \;\; \mathnormal{H}_{L}^{1}(0,L_{0}) , \\ \label{aap}  
\mathrm{i}\lambda z - y_{xx}+c_{1}(x)u&=\lambda^{-\ell}f_{4}\longrightarrow 0\;\; \text{in} \;\; \mathnormal{L}^{2}(0,L_{0}), \\\label{aP}
\mathrm{i}\lambda \varphi-\eta&=\lambda^{-\ell}f_{5}\longrightarrow 0\;\; \text{in} \;\; \mathnormal{H}_{R}^{1}(L_{0},L),\\ \label{ap6}
\mathrm{i}\lambda \eta-a_{2}  \varphi_{xx}+c_{2}(x)\xi+d_{2}(x) \eta &=\lambda^{-\ell}f_{6}\longrightarrow 0\;\; \text{in} \;\; \mathnormal{L}^{2}(L_{0},L),\\ \label{fp7}
\mathrm{i}\lambda \psi-\xi&=\lambda^{-\ell}f_{7}\longrightarrow 0\;\; \text{in} \;\; \mathnormal{H}_{R}^{1}(L_{0},L) , \\ \label{aap8}  
\mathrm{i}\lambda \xi - \psi_{xx}-c_{2}(x)\eta&=\lambda^{-\ell}f_{8}\longrightarrow 0\;\; \text{in} \;\; \mathnormal{L}^{2}(L_{0},L). 
\end{align}
Here we will check the condition (\ref{c2}) by finding a contradiction with (\ref{hn}) by showing  $\left\| U\right\|_{\mathcal{H}}=o(1) $. From (\ref{hn}), (\ref{lpo}), (\ref{fp}), (\ref{aP}) and (\ref{fp7}), we obtain
\begin{equation}\label{ref1}
\left\|\lambda u \right\|_{\mathnormal{L}^{2}(0,L_{0})}=O(1), \;\; \;\;  \left\|\lambda y \right\|_{\mathnormal{L}^{2}(0,L_{0})}=O(1). 
\end{equation}
and
\begin{equation}\label{ref2}
\left\|\lambda \varphi \right\|_{\mathnormal{L}^{2}(L_{0},L)}=O(1), \;\; \;\; \left\|\lambda \psi \right\|_{\mathnormal{L}^{2}(L_{0},L)}=O(1) .
\end{equation}
For clarity, we will divide the proof into several lemmas.
\begin{lemma}\label{lemme1}
The solution $(u, v, y, z,\varphi, \eta, \psi, \xi) \in  \mathnormal{D}(\mathcal{A})$ of (\ref{lpo})-(\ref{aap8}) satisfies the following asymptotic behavior estimation 
 \begin{equation}\label{lemm}
\int_{\beta_{2}}^{\beta_{4}} \left| \eta \right| ^{2} dx= o\left( \lambda ^{- \ell}\right) \;\; \text{and} \;\; \int_{\beta_{2}}^{\beta_{4}} \left|\lambda \varphi \right| ^{2} dx= o\left(\lambda ^{- \ell}\right).
	\end{equation}
\end{lemma}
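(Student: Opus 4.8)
The plan is to read off both estimates from the dissipativity of $\mathcal{A}$, reusing the identity (\ref{re}) that already drove the strong-stability argument. First I would take the inner product of the resolvent equation (\ref{hnn}) with $U$ in $\mathcal{H}$ and pass to real parts. Since $\lambda \in \mathbb{R}^{*}$, one has $\Re\langle \mathrm{i}\lambda U, U\rangle_{\mathcal{H}} = \Re\!\left(\mathrm{i}\lambda\|U\|_{\mathcal{H}}^{2}\right) = 0$, so the equation collapses to
\begin{equation*}
\lambda^{\ell}\,\Re\langle -\mathcal{A}U, U\rangle_{\mathcal{H}} = \Re\langle F, U\rangle_{\mathcal{H}}.
\end{equation*}
By (\ref{re}) the left-hand side is $\lambda^{\ell}\int_{L_{0}}^{L} d_{2}(x)\,|\eta|^{2}\,dx$, while Cauchy--Schwarz combined with $\|U\|_{\mathcal{H}}=1$ (from (\ref{hn})) and $F \to 0$ in $\mathcal{H}$ (from (\ref{hnn})) bounds the right-hand side. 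Taking absolute values and recalling that the dissipation integral is nonnegative, this gives
\begin{equation*}
|\lambda|^{\ell}\int_{L_{0}}^{L} d_{2}(x)\,|\eta|^{2}\,dx = |\Re\langle F, U\rangle_{\mathcal{H}}| \leq \|F\|_{\mathcal{H}} = o(1).
\end{equation*}

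The first claimed estimate is now immediate: by (\ref{d2}) the coefficient $d_{2}(x)$ equals the strictly positive constant $d_{2}$ on $(\beta_{2},\beta_{4})$ and vanishes off it, so $\int_{L_{0}}^{L} d_{2}(x)|\eta|^{2}\,dx = d_{2}\int_{\beta_{2}}^{\beta_{4}}|\eta|^{2}\,dx$, and dividing the previous display by the constant $d_{2}>0$ yields $\int_{\beta_{2}}^{\beta_{4}}|\eta|^{2}\,dx = o(\lambda^{-\ell})$.

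For the second estimate I would trade $\eta$ for $\lambda\varphi$ through the first-order relation (\ref{aP}), which reads $\mathrm{i}\lambda\varphi - \eta = \lambda^{-\ell}f_{5}$, hence $\lambda\varphi = -\mathrm{i}\bigl(\eta + \lambda^{-\ell}f_{5}\bigr)$. Restricting to $(\beta_{2},\beta_{4})$ and using $|a+b|^{2} \leq 2|a|^{2} + 2|b|^{2}$,
\begin{equation*}
\int_{\beta_{2}}^{\beta_{4}}|\lambda\varphi|^{2}\,dx \leq 2\int_{\beta_{2}}^{\beta_{4}}|\eta|^{2}\,dx + 2\lambda^{-2\ell}\int_{\beta_{2}}^{\beta_{4}}|f_{5}|^{2}\,dx.
\end{equation*}
The first term is $o(\lambda^{-\ell})$ by the estimate just established, and since $f_{5} \to 0$ in $\mathnormal{H}^{1}_{R}(L_{0},L)$ one has $\|f_{5}\|_{\mathnormal{L}^{2}(L_{0},L)}=o(1)$, so the second term equals $|\lambda|^{-2\ell}o(1)$, which is $o(|\lambda|^{-\ell})$ for both admissible exponents $\ell\in\{0,1\}$ because $|\lambda|^{-2\ell}\leq |\lambda|^{-\ell}$ once $|\lambda|\geq 1$. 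Adding the two contributions gives the second estimate.

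I do not anticipate a genuine difficulty in this lemma: it is the standard step that converts the damping term into quantitative control of the dissipative component $\eta$ and, through (\ref{aP}), of $\lambda\varphi$ on the damping region $(\beta_{2},\beta_{4})$. The only point demanding care is the uniform tracking of the $o(\lambda^{-\ell})$ remainders across the two cases $\ell=0$ and $\ell=1$, so that the single statement covers both the exponential ($\ell=0$) and polynomial ($\ell=1$) regimes simultaneously.
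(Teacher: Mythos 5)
Your proof is correct and follows essentially the same route as the paper: you take the real part of the inner product of (\ref{hnn}) with $U$, use the dissipativity identity (\ref{re}) together with $\|U\|_{\mathcal{H}}=1$ and $\|F\|_{\mathcal{H}}=o(1)$ to bound $\int_{\beta_{2}}^{\beta_{4}}|\eta|^{2}\,dx$, and then pass to $\lambda\varphi$ via (\ref{aP}), exactly as the paper does. The only difference is that you spell out the elementary inequality $|a+b|^{2}\le 2|a|^{2}+2|b|^{2}$ and the $\ell$-bookkeeping (which in fact works for any $\ell\ge 0$, not just $\ell\in\{0,1\}$, once $|\lambda|\ge 1$), details the paper leaves implicit.
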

\begin{proof}
	Taking the inner product of (\ref{hnn}) with $U$ in $\mathcal{H}$ and using (\ref{re}), we get
	$$
	\int_{\beta_{2}}^{\beta_{4} } \left| \eta \right|^{2}dx=-\Re \left( \langle \mathcal{A}U,U\rangle_{\mathcal{H}}\right) = \Re \left( \langle\left( \mathrm{i}\lambda I- \mathcal{A}\right) U,U\rangle_{\mathcal{H}}\right)=\lambda^{-\ell}\Re  \langle F,U\rangle_{\mathcal{H}}\leqslant \left| \lambda\right| ^{-\ell} \left\|F \right\|_{\mathcal{H}} \left\|U \right\|_{\mathcal{H}} .
	$$
Thus, from the above equation, the fact that $\left\| F\right\|_{\mathcal{H}}=o(1) $ and $\left\| U\right\|_{\mathcal{H}}=1 $, we obtain the first estimation in (\ref{lemm}). By using (\ref{aP}) and the first estimation in (\ref{lemm}), we get the last estimation.	
\end{proof}
Inserting (\ref{aP}) and (\ref{fp7}) into (\ref{ap6}) and (\ref{aap8}), we get the following system
\begin{align}\label{knn}
\lambda^{2}\varphi+a_{2} \varphi_{xx} -\mathrm{i}\lambda d_{2}(x)\varphi-\mathrm{i}\lambda c_{2}(x)\psi&=F_{1},\\\label{knnn}
\lambda^{2}\psi+\psi_{xx}+\mathrm{i}\lambda c_{2}(x)\varphi&=F_{2},
\end{align}
where
\begin{equation*}
\left \{
\begin{aligned}
&F_{1}=-\mathrm{i}\lambda^{-\ell+1} f_{5}-d_{2}(x)\lambda^{-\ell} f_{5}-\lambda^{-\ell} f_{6}-\lambda^{-\ell} c_{2}(x)f_{7}, \\
&F_{2}=\lambda^{-\ell}c_{2}(x)f_{5}-\mathrm{i}\lambda^{-\ell+1} f_{7}-\lambda^{-\ell} f_{8}.\\
\end{aligned} \right.
\end{equation*}
\begin{lemma}\label{lemme2}
Let $\max\left( \beta_{2},\frac{\beta_{3}-\beta_{2}}{5}\right)<\delta< \beta_{4} $. The solution $(u, v, y, z,\varphi, \eta, \psi, \xi) \in  \mathnormal{D}(\mathcal{A})$ of (\ref{lpo})-(\ref{aap8}) satisfies the following asymptotic behavior estimation
	\begin{equation}\label{lemm2}
	\int_{\beta_{2}+\delta}^{\beta_{4}-\delta} \left| \varphi_{x} \right| ^{2} dx= o\left( \lambda ^{-\min \left(\ell,\frac{\ell}{2}+1 \right) } \right)  .
	\end{equation}
\end{lemma}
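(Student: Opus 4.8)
The plan is to run a cut-off multiplier argument on the $\varphi$-equation (\ref{knn}), exploiting the fact that Lemma~\ref{lemme1} already controls $\lambda\varphi$ on the whole damping window $(\beta_{2},\beta_{4})$, and upgrading this into an interior estimate for $\varphi_{x}$. First I would fix a cut-off $\theta\in C^{1}(\mathbb{R})$ with $0\le\theta\le1$, $\theta\equiv1$ on $[\beta_{2}+\delta,\beta_{4}-\delta]$ and $\operatorname{supp}\theta\subset(\beta_{2},\beta_{4})$, so that $\theta_{x}$ is supported only in the two transition strips near $\beta_{2}$ and $\beta_{4}$ and $\|\theta_{x}\|_{\infty}$ is a fixed constant.

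Next I would multiply (\ref{knn}) by $\theta\overline{\varphi}$ and integrate over $(L_{0},L)$. Integrating the second-order term by parts (the boundary terms vanish since $\theta$ is compactly supported inside $(\beta_{2},\beta_{4})$) gives
\begin{align*}
a_{2}\int \theta\,|\varphi_{x}|^{2}\,dx &= \int \theta\,|\lambda\varphi|^{2}\,dx - a_{2}\int \theta_{x}\,\overline{\varphi}\,\varphi_{x}\,dx - \mathrm{i}\lambda\int \theta\, d_{2}(x)|\varphi|^{2}\,dx \\
&\quad - \mathrm{i}\lambda\int \theta\, c_{2}(x)\,\overline{\varphi}\,\psi\,dx - \int \theta\,\overline{\varphi}\,F_{1}\,dx .
\end{align*}
Since $\theta\equiv1$ on $[\beta_{2}+\delta,\beta_{4}-\delta]$ and $\theta\ge0$, the left-hand side dominates $\int_{\beta_{2}+\delta}^{\beta_{4}-\delta}|\varphi_{x}|^{2}\,dx$, so it suffices to bound each term on the right, all of whose integrals live in $(\beta_{2},\beta_{4})$.

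Then I would estimate the right-hand side term by term. The first term equals $\int\theta|\lambda\varphi|^{2}=o(\lambda^{-\ell})$ directly by Lemma~\ref{lemme1}. For the damping term, $d_{2}$ is constant on $(\beta_{2},\beta_{4})$ and the same estimate gives $\lambda\int\theta|\varphi|^{2}=\lambda^{-1}o(\lambda^{-\ell})=o(\lambda^{-\ell-1})$. The cut-off cross term is controlled by Cauchy--Schwarz: using $\int_{\beta_{2}}^{\beta_{4}}|\varphi|^{2}=\lambda^{-2}o(\lambda^{-\ell})=o(\lambda^{-\ell-2})$ together with $\|\varphi_{x}\|_{L^{2}}=O(1)$ (from $\|U\|_{\mathcal{H}}=1$), one gets $|\int\theta_{x}\overline{\varphi}\varphi_{x}|=o(\lambda^{-\ell/2-1})$. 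For the $F_{1}$ contribution one uses $\|f_{5}\|,\|f_{6}\|,\|f_{7}\|=o(1)$ together with the $L^{2}$-smallness of $\varphi$ and $\lambda\varphi$; each resulting piece is $o(\lambda^{-\min(\ell,\ell/2+1)})$ or smaller. Collecting all bounds yields the claimed rate.

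The term I expect to be the genuine obstacle is the coupling term $\mathrm{i}\lambda\int\theta c_{2}\overline{\varphi}\psi$, because it mixes in $\psi$, for which at this stage we only know $\|\lambda\psi\|_{L^{2}(L_{0},L)}=O(1)$, i.e.\ $\|\psi\|_{L^{2}}=O(\lambda^{-1})$ from (\ref{ref2}). Writing it as $\int\theta c_{2}(\lambda\overline{\varphi})\psi$ and applying Cauchy--Schwarz with $(\int|\lambda\varphi|^{2})^{1/2}=o(\lambda^{-\ell/2})$ and $\|\psi\|_{L^{2}}=O(\lambda^{-1})$ produces exactly the $o(\lambda^{-\ell/2-1})$ contribution, which is why the final rate is the minimum $\min(\ell,\ell/2+1)$ rather than the cleaner $\ell$. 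Balancing this weaker control of $\psi$ against the strong $\lambda\varphi$-control coming from Lemma~\ref{lemme1} is the crux of the estimate.
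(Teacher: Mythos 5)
Your proposal is correct and follows essentially the same route as the paper: the same cut-off $\theta$ equal to $1$ on $[\beta_{2}+\delta,\beta_{4}-\delta]$ and supported in $(\beta_{2},\beta_{4})$, the same multiplier $\theta\overline{\varphi}$ applied to (\ref{knn}), and the same term-by-term estimates from Lemma~\ref{lemme1} and the uniform bounds, with the coupling term $\mathrm{i}\lambda\int\theta c_{2}\overline{\varphi}\psi$ correctly identified as the source of the $\min\left(\ell,\frac{\ell}{2}+1\right)$ rate. The only cosmetic difference is that the paper takes real parts at the outset, which makes the damping term $\mathrm{i}\lambda\int\theta d_{2}|\varphi|^{2}$ vanish identically, whereas you bound its modulus by $o\left(\lambda^{-\ell-1}\right)$ --- both suffice.
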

\begin{proof}
First, we define the cut-off function $\theta_{1} \in C^{1}(L_{0},L)$ by 
\begin{equation}\label{a9}
0\leq \theta_{1} \leq 1, \;\; \theta_{1}=1 \;
\text{on} \;(\beta_{2}+\delta, \beta_{4}-\delta) \;\; \;\; \text{and} \;\; \;\; \theta_{1}=0 \; \text{on} \; (L_{0},\beta_{2}) \cup (\beta_{4},L). 
\end{equation}
Multiplying (\ref{knn}) by $\theta_{1} \overline{\varphi}$, integrating over $(L_{0},L)$ and taking the real part, we get
\begin{align}\label{Rr}
&\int_{L_{0}}^{L} \theta_{1} \left| \lambda \varphi \right|^{2} dx - a_{2} \int_{L_{0}}^{L} \theta_{1} \left| \varphi_{x} \right|^{2} dx - \Re\left\lbrace  a_{2}\int_{L_{0}}^{L}\theta_{1}' \varphi_{x} \overline{\varphi} dx\right\rbrace  \\ \nonumber
& - \Re\left\lbrace  \mathrm{i} \lambda c_{2} \int_{\beta_{2}}^{\beta_{3}}  \theta_{1} \psi  \overline{\varphi} dx\right\rbrace =\Re\left\lbrace   \int_{L_{0}}^{L}  F_{1} \theta_{1}  \overline{\varphi} dx\right\rbrace. 
\end{align}
Using the fact that $\left\|F \right\|\rightarrow 0 $ in $\mathcal{H}$ and $\lambda \varphi$ is uniformly bounded in $\mathnormal{L}^{2}(L_{0},L)$, we obtain 
\begin{align} \label{at}
\Re\left\lbrace   \int_{L_{0}}^{L}  F_{1} \theta_{1}  \overline{\varphi} dx\right\rbrace&= \Re\left\lbrace   \int_{L_{0}}^{L}  \theta_{1} \left( -\mathrm{i}\lambda^{-\ell+1} f_{5}-d_{2}(x)\lambda^{-\ell} f_{5}-\lambda^{-\ell} f_{6}-\lambda^{-\ell} c_{2}(x)f_{7} \right)  \overline{\varphi}  dx\right\rbrace \\ \nonumber
& = o\left( \lambda^{-\ell}\right) .
\end{align} 
On the other hand, using Lemma\ref{lemme1}, the fact that $\lambda \psi$ and $\varphi_{x}$ are uniformly bounded in $\mathnormal{L}^{2}(L_{0},L)$ and the definition of $\theta_{1}$, we get
\begin{equation}\label{R}
\Re\left\lbrace  \mathrm{i} \lambda c_{2} \int_{\beta_{2}}^{\beta_{3}}  \theta_{1} \psi \overline{\varphi} dx\right\rbrace  =o\left( \lambda ^{-\left( \frac{\ell}{2}+1\right) }\right),  
\end{equation}
and
\begin{equation}\label{R2}
 \Re\left\lbrace  a_{2}\int_{L_{0}}^{L}\theta_{1}' \varphi_{x} \overline{\varphi}  dx\right\rbrace   =o\left(  \lambda ^{-\left( \frac{\ell}{2}+1\right) }\right). 
\end{equation}
Furthermore, using Lemma\ref{lemme1} and the definition of the function $\theta_{1}$
in (\ref{a9}), we get
\begin{equation}\label{R3}
\int_{L_{0}}^{L}\theta_{1} \left| \lambda \varphi \right|^{2} dx  =o\left( \lambda ^{-\ell}\right). 
\end{equation}
Inserting (\ref{at})-(\ref{R3}) in (\ref{Rr}), we get (\ref{lemm2}).
\end{proof}
\begin{lemma}\label{lemme3}
	Let  $\max\left( \beta_{2},\frac{\beta_{3}-\beta_{2}}{5}\right)<\delta< \beta_{4} $. The solution $(u, v, y, z,\varphi, \eta, \psi, \xi) \in  \mathnormal{D}(\mathcal{A})$ of (\ref{lpo})-(\ref{aap8}) satisfies the following asymptotic behavior estimation
	\begin{equation}\label{lemm3}
	\int_{\beta_{2}+2\delta}^{\beta_{3}-\delta}   \left| \psi_{x} \right| ^{2} dx 
	\leq  \frac{ \left|  a_{2}-1 \right|  \left|\lambda \right| }{\left| c_{2}\right| } o\left(  \lambda  ^{-\min\left(\frac{\ell}{2},\frac{\ell}{4}+\frac{1}{2} \right) }\right)+o(1). 
	\end{equation}
\end{lemma}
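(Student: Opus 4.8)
The plan is to localize in the overlap of the coupling region and the region already controlled by Lemma~\ref{lemme2}, and to transfer the smallness of $\varphi_x$ found there onto $\psi_x$ through the coupling term carrying $\mathrm{i}\lambda c_2$. Concretely, I would fix a cut-off $\theta_2\in C^1(L_0,L)$ with $0\le\theta_2\le1$, $\theta_2=1$ on $(\beta_2+2\delta,\beta_3-\delta)$ and $\mathrm{supp}\,\theta_2\subset(\beta_2+\delta,\beta_3)$, so that on $\mathrm{supp}\,\theta_2$ both $c_2(x)=c_2$ and $d_2(x)=d_2$ are constant and Lemma~\ref{lemme2} is available for $\varphi_x$.

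First I would produce the identity linking $\psi_x$ to $\lambda\psi$. Multiplying \eqref{knnn} by $\theta_2\overline{\psi}$, integrating over $(L_0,L)$, integrating by parts (the boundary terms vanish since $\theta_2$ is compactly supported) and taking the real part gives
\begin{equation*}
\int_{L_0}^{L}\theta_2|\psi_x|^2\,dx=\int_{L_0}^{L}\theta_2|\lambda\psi|^2\,dx+o(1),
\end{equation*}
because $\Re\int\theta_2'\overline{\psi}\psi_x$ is $O(\lambda^{-1})$ (using $\|\psi\|=O(\lambda^{-1})$ from \eqref{ref2} and $\|\psi_x\|=O(1)$), the coupling term $\Re\{\mathrm{i}\lambda c_2\int\theta_2\varphi\overline{\psi}\}$ is bounded by $\|\lambda\varphi\|_{L^2(\mathrm{supp}\,\theta_2)}\|\psi\|=o(1)$ through Lemma~\ref{lemme1}, and the $F_2$-contribution is negligible.

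The heart of the matter is to estimate $\int\theta_2|\lambda\psi|^2$, and this is where the factor $a_2-1$ must appear. I would multiply \eqref{knn} by $\theta_2\overline{\psi}$, multiply \eqref{knnn} by $\theta_2\overline{\varphi}$, conjugate the latter, and subtract the two identities. The terms in $\lambda^2$ cancel exactly, and after integrating by parts the two second-order contributions the combination $a_2\varphi_{xx}\overline{\psi}-\overline{\psi_{xx}}\varphi$ produces precisely $(1-a_2)\int\theta_2\varphi_x\overline{\psi_x}$ together with harmless $\theta_2'$-terms. Taking the imaginary part then isolates the coupling and yields
\begin{equation*}
\lambda c_2\int_{L_0}^{L}\theta_2|\psi|^2\,dx=(a_2-1)\,\Im\!\int_{L_0}^{L}\theta_2\varphi_x\overline{\psi_x}\,dx+R,
\end{equation*}
where $R$ gathers the $\theta_2'$-terms, the $d_2$-term, the term $\lambda c_2\int\theta_2|\varphi|^2$ and the sources, all small by Lemma~\ref{lemme1}, Lemma~\ref{lemme2} and \eqref{ref2}. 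Multiplying by $\lambda$ (so that $\lambda^2\int\theta_2|\psi|^2=\int\theta_2|\lambda\psi|^2$) turns $R$ into a genuine $o(1)$ and gives
\begin{equation*}
\int_{L_0}^{L}\theta_2|\lambda\psi|^2\,dx=\frac{\lambda(a_2-1)}{c_2}\,\Im\!\int_{L_0}^{L}\theta_2\varphi_x\overline{\psi_x}\,dx+o(1).
\end{equation*}

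Finally I would bound the cross term by Cauchy--Schwarz, $|\Im\int\theta_2\varphi_x\overline{\psi_x}|\le\|\varphi_x\|_{L^2(\mathrm{supp}\,\theta_2)}\|\psi_x\|_{L^2(L_0,L)}$, invoke Lemma~\ref{lemme2} to get $\|\varphi_x\|_{L^2}=o(\lambda^{-\min(\ell/2,\ell/4+1/2)})$ (using $\tfrac12\min(\ell,\tfrac{\ell}{2}+1)=\min(\tfrac{\ell}{2},\tfrac{\ell}{4}+\tfrac12)$) together with $\|\psi_x\|_{L^2}=O(1)$, and combine with the first identity; restricting the integral to $(\beta_2+2\delta,\beta_3-\delta)$, where $\theta_2\equiv1$, then gives exactly \eqref{lemm3}. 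The main obstacle is the bookkeeping in the subtraction step: one must check that every remainder in $R$ remains $o(\lambda^{-1})$ so that it survives multiplication by $\lambda$, and that the two wave operators recombine to leave the clean factor $(1-a_2)$ on the single cross term $\int\theta_2\varphi_x\overline{\psi_x}$. This cancellation is precisely what makes the estimate collapse to $o(1)$ in the equal-speed case $a_2=1$ and only polynomial when $a_2\neq1$.
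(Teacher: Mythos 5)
Your cancellation mechanism is the right one, and it is exactly the algebra underlying the paper's proof: cross-pairing the two equations so that the $\lambda^{2}$ terms drop and the second-order terms recombine into $(1-a_{2})\int\theta_{2}\varphi_{x}\overline{\psi_{x}}$, which collapses in the equal-speed case. But your implementation through the zeroth-order quantity $\int\theta_{2}|\lambda\psi|^{2}$ has a genuine quantitative gap, precisely at the point you flag as ``the main obstacle'' and do not resolve: not every remainder in $R$ is $o(\lambda^{-1})$. Before you multiply by $\lambda$, the worst source contribution in $\Im\int\theta_{2}\bigl(F_{1}\overline{\psi}-\overline{F_{2}}\varphi\bigr)\,dx$ comes from the terms $-\mathrm{i}\lambda^{1-\ell}f_{5}$ in $F_{1}$ and $-\mathrm{i}\lambda^{1-\ell}f_{7}$ in $F_{2}$. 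The only available bounds are $\|f_{5}\|_{L^{2}}=o(1)$, $\|f_{7}\|_{L^{2}}=o(1)$ and $\|\psi\|_{L^{2}}=O(\lambda^{-1})$ from (\ref{ref2}), so $\lambda^{1-\ell}\int\theta_{2}f_{5}\overline{\psi}\,dx$ is only $o(\lambda^{-\ell})$, not $o(\lambda^{-\ell-1})$. After multiplication by $\lambda$ this becomes $o(\lambda^{1-\ell})$: harmless for $\ell\geq 1$ (in particular for $\ell=2$ used in the polynomial case), but at $\ell=0$ --- exactly the case needed for Theorem~\ref{port}, where $a_{2}=1$ and the lemma must deliver a genuine $o(1)$ --- your final inequality degenerates to $\int\theta_{2}|\psi_{x}|^{2}\,dx\leq o(\lambda)$, which is vacuous. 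There is no cheap repair inside your scheme: at this stage of the argument nothing better than $\|\psi\|_{L^{2}(\mathrm{supp}\,\theta_{2})}=O(\lambda^{-1})$ is known (Lemma~\ref{lemme1} controls $\varphi$ on the damped region, not $\psi$), so the extra factor of $\lambda$ cannot be absorbed.

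The paper sidesteps this by realizing the same cancellation one derivative higher, so that no a posteriori multiplication by $\lambda$ is ever needed. It multiplies (\ref{ap6}) by $\mathrm{i}\lambda^{-1}\theta_{2}\overline{\psi_{xx}}$ and (\ref{aap8}) by $\mathrm{i}\lambda^{-1}a_{2}\theta_{2}\overline{\varphi_{xx}}$, using that $\lambda^{-1}\psi_{xx}$ and $\lambda^{-1}\varphi_{xx}$ are uniformly bounded in $L^{2}$ (read off directly from the resolvent equations). Adding the two real-part identities, the terms $\Re\{\frac{\mathrm{i}}{\lambda}a_{2}\int\theta_{2}\varphi_{xx}\overline{\psi_{xx}}\,dx\}$ cancel against their conjugate counterparts, while $\Re\{\mathrm{i}\lambda\int\theta_{2}\varphi_{x}\overline{\psi_{x}}\,dx\}+\Re\{\mathrm{i}\lambda a_{2}\int\theta_{2}\psi_{x}\overline{\varphi_{x}}\,dx\}=-\lambda(1-a_{2})\Im\int\theta_{2}\varphi_{x}\overline{\psi_{x}}\,dx$ --- the same factor you extracted --- and every inhomogeneous term enters as $\lambda^{-\ell}f\cdot O(1)$, hence is $o(\lambda^{-\ell})=o(1)$ uniformly in $\ell\geq 0$. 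Your final steps (Cauchy--Schwarz, Lemma~\ref{lemme2} with the exponent identity $\tfrac{1}{2}\min(\ell,\tfrac{\ell}{2}+1)=\min(\tfrac{\ell}{2},\tfrac{\ell}{4}+\tfrac{1}{2})$, and restriction to the set where $\theta_{2}\equiv 1$) are correct and identical to the paper's; to make the whole proof sound you would need to rebuild the middle step at the level of $\varphi_{x},\psi_{x}$ as above, rather than at the level of $|\lambda\psi|^{2}$.
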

\begin{proof}
First, we fix a cut-off function $\theta_{2} \in C^{1}(L_{0},L)$ such that $0\leq \theta_{2}(x) \leq1$, for all $x \in \left[L_{0},L\right]  $ and 
\begin{equation*}
\theta_{2}(x)=
\begin{cases}
1 & \text{ if} \;  x \in \left[ \beta_{2}+2\delta ,\beta_{3}-\delta\right] , 
\\
0 &  \text{ if }\;    x \in \left[ L_{0} ,\beta_{2}+\delta\right] \cup   \left[ \beta_{3} ,L\right] .
\end{cases} 
\end{equation*}
From (\ref{aap8}), $\mathrm{i}\lambda^{-1}\theta_{2}\overline{\psi_{xx}}$ is uniformly bounded in $\mathnormal{L}^{2}(L_{0}, L)$. Multiplying (\ref{ap6}) by $\mathrm{i}\lambda^{-1}\theta_{2}\overline{\psi_{xx}}$, using integration by parts over $(L_{0}, L)$ and using the fact that $\left\| f_{6}\right\| _{\mathnormal{L}^{2}(L_{0}, L)} = o(1)$, we get
\begin{align} \label{4.01} 
&\int_{L_{0}}^{L}\theta_{2}' \eta \overline{\psi_{x}} dx+ \int_{L_{0}}^{L}\theta_{2} \eta_{x} \overline{\psi_{x}} dx- \frac{\mathrm{i}}{\lambda}
a_{2}\int_{L_{0}}^{L}\theta_{2} \varphi_{xx} \overline{\psi_{xx}} dx\\ \nonumber
& - \frac{\mathrm{i}}{\lambda}c_{2}
\int_{\beta_{2}+\delta}^{\beta_{3}} \left( \theta_{2}'\xi+\theta_{2}\xi_{x}\right)  \overline{\psi_{x}} dx  
- \frac{\mathrm{i}}{\lambda}d_{2}
\int_{\beta_{2}+\delta}^{\beta_{3}} \left( \theta_{2}'\eta+ \theta_{2} \eta_{x}\right)   \overline{\psi_{x}} dx=o\left(  \lambda^{-\ell}\right) .
\end{align}
From (\ref{aP}) and (\ref{fp7}), we obtain
$$
\eta_{x}= \mathrm{i}\lambda \varphi_{x}-\lambda^{-\ell}\left( f_{5}\right) _{x} \;\; \;\; \text{and} \;\; \;\; - \frac{\mathrm{i}}{\lambda}\xi_{x}=  \psi_{x}+\mathrm{i}\lambda^{-(\ell+1)}\left( f_{7}\right)_{x}.
$$
Inserting the above equations in (\ref{4.01}) and taking the real part, we get
\begin{align}\label{H} \nonumber
&\Re\left\lbrace \mathrm{i}\lambda \int_{L_{0}}^{L}\theta_{2} \varphi_{x} \overline{\psi_{x}} dx\right\rbrace - \Re\left\lbrace \frac{\mathrm{i}}{\lambda} a_{2} \int_{L_{0}}^{L}\theta_{2} \varphi_{xx} \overline{\psi_{xx}} dx\right\rbrace 
+c_{2} \int_{\beta_{2}+\delta}^{\beta_{3}} \theta_{2} \left|\psi_{x}\right|^{2} dx\\
&= - \Re\left\lbrace  \int_{L_{0}}^{L} \theta_{2}'\eta \overline{\psi_{x}} dx\right\rbrace + \Re\left\lbrace \frac{1}{\lambda^{\ell}} \int_{L_{0}}^{L} \theta_{2} \left( f_{5}\right) _{x}  \overline{\psi_{x}} dx\right\rbrace   + \Re\left\lbrace \frac{\mathrm{i}}{\lambda} c_{2}\int_{\beta_{2}+\delta}^{\beta_{3}}  \theta_{2}' \xi \overline{\psi_{x}} dx\right\rbrace \\ \nonumber
&- \Re\left\lbrace \frac{\mathrm{i}}{\lambda^{(\ell+1)}} c_{2} \int_{\beta_{2}+\delta}^{\beta_{3}}  \theta_{2} \left( f_{7}\right)_{x} \overline{\psi_{x}} dx\right\rbrace + \Re\left\lbrace \frac{\mathrm{i}}{\lambda} d_{2} \int_{\beta_{2}+\delta}^{\beta_{3}}  \theta_{2}' \eta \overline{\psi_{x}} dx \right\rbrace \\ \nonumber
&- \Re\left\lbrace  d_{2} \int_{\beta_{2}+\delta}^{\beta_{3}}  \theta_{2} \varphi_{x} \overline{\psi_{x}} dx \right\rbrace - \Re\left\lbrace \frac{\mathrm{i}}{\lambda^{(\ell+1)}} d_{2} \int_{\beta_{2}+\delta}^{\beta_{3}}  \theta_{2} \left( f_{5}\right) _{x} \overline{\psi_{x}} dx \right\rbrace + o\left( \lambda ^{-\ell}\right) .
\end{align}
Using the fact that $ \psi_{x}$ is uniformly bounded in $\mathnormal{L}^{2}(L_{0}, L)$ and $\left\| F\right\| _{\mathcal{H}} = o(1)$, we get
\begin{align}\label{Ha}
&\Re\left\lbrace \frac{1}{\lambda^{\ell}} \int_{L_{0}}^{L}  \theta_{2} \left( f_{5}\right) _{x} \overline{\psi_{x}} dx\right\rbrace = o\left( \lambda^{-\ell}\right) , \; -\Re\left\lbrace \frac{\mathrm{i}}{\lambda^{(\ell+1)}} c_{2} \int_{\beta_{2}+\delta}^{\beta_{3}}  \theta_{2} \left( f_{7}\right)_{x} \overline{\psi_{x}} dx\right\rbrace = o\left(  \lambda^{-(\ell+1)}\right)\\ \nonumber
& \;\ \;\; \;\; \;\; \;\; \;\; \text{and}\;\;  \;-\Re\left\lbrace \frac{\mathrm{i}}{\lambda^{(\ell+1)}} d_{2} \int_{\beta_{2}+\delta}^{\beta_{3}}  \theta_{2} \left( f_{5}\right) _{x} \overline{\psi_{x}} dx\right\rbrace = o\left(  \lambda^{-(\ell+1)}\right)  .
\end{align}
Next, using the fact that $ \psi_{x}$, $ \eta$ and $\xi$ are uniformly bounded in $\mathnormal{L}^{2}(L_{0}, L)$, we have
\begin{equation}
\Re\left\lbrace \frac{\mathrm{i}}{\lambda} c_{2} \int_{\beta_{2}+\delta}^{\beta_{3}}  \theta_{2}' \xi \overline{\psi_{x}} dx\right\rbrace =O\left( \lambda ^{-1}\right) = o(1) \;\; \text{and} \;\; 
\Re\left\lbrace \frac{\mathrm{i}}{\lambda} d_{2} \int_{\beta_{2}+\delta}^{\beta_{3}}  \theta_{2}' \eta \overline{\psi_{x}} dx\right\rbrace =O\left( \lambda ^{-1}\right) = o(1)  .
\end{equation}
Furthermore, using the estimations (\ref{lemm}), (\ref{lemm2}) and the fact that $ \psi_{x}$ is uniformly bounded in $\mathnormal{L}^{2}(L_{0}, L)$, we get 
\begin{equation}\label{Has}
-\Re\left\lbrace  \int_{L_{0}}^{L}  \theta_{2}' \eta \overline{\psi_{x}} dx\right\rbrace = o\left(  \lambda^{-\frac{\ell}{2}}\right) \
  \;\;  \text{and} \;\;   -\Re\left\lbrace  d_{2} \int_{\beta_{2}+\delta}^{\beta_{3}}  \theta_{2} \varphi_{x} \overline{\psi_{x}} dx\right\rbrace =o\left(  \lambda  ^{-\min\left(\frac{\ell}{2},\frac{\ell}{4}+\frac{1}{2} \right) }\right).
\end{equation}
Inserting (\ref{Ha})-(\ref{Has}) in (\ref{H}) and using the definition of $(\theta_{2})$, we get
\begin{equation}\label{rs1}
c_{2}\int_{\beta_{2}+2\delta}^{\beta_{3}-\delta} \left| \psi_{x}\right| ^{2} dx +\Re\left\lbrace \mathrm{i}\lambda \int_{\beta_{2}+\delta}^{\beta_{3}} \theta_{2} \varphi_{x} \overline{\psi_{x}} dx\right\rbrace - \Re\left\lbrace \frac{\mathrm{i}}{\lambda} a_{2} \int_{\beta_{2}+\delta}^{\beta_{3}} \theta_{2} \varphi_{xx}\overline{\psi_{xx}} dx\right\rbrace=o(1).
\end{equation}
 From (\ref{ap6}), $\mathrm{i}\lambda^{-1}a_{2}\theta_{2}\overline{\varphi_{xx}}$ is uniformly bounded in $\mathnormal{L}^{2}(L_{0}, L)$. Multiplying (\ref{aap8}) by $\mathrm{i}\lambda^{-1}a_{2}\theta_{2}\overline{\varphi_{xx}}$, using integration by parts over $(L_{0}, L)$ and the fact that $\left\| f_{8}\right\| _{\mathnormal{L}^{2}(L_{0}, L)} = o(1)$, we get
\begin{align} \label{4.02} 
&a_{2}\int_{L_{0}}^{L}\theta_{2}'\xi \overline{\varphi_{x}} dx+ a_{2} \int_{L_{0}}^{L}\theta_{2} \xi_{x} \overline{\varphi_{x}} dx- \frac{\mathrm{i}}{\lambda}
a_{2}\int_{L_{0}}^{L}\theta_{2} \psi_{xx} \overline{\varphi_{xx}} dx \\ \nonumber
& + \frac{\mathrm{i}}{\lambda} a_{2} c_{2}
\int_{\beta_{2}+\delta}^{\beta_{3}}   \theta_{2}' \eta  \overline{\varphi_{x}} dx 
+ \frac{\mathrm{i}}{\lambda} a_{2} c_{2} \int_{\beta_{2}+\delta}^{\beta_{3}}   \theta_{2} \eta_{x}  \overline{\varphi_{x}} dx = o\left( \lambda^{-\ell}\right) .
\end{align}
From (\ref{aP}) and (\ref{fp7}), we obtain
$$
\eta_{x}= \mathrm{i}\lambda \varphi_{x}-\lambda^{-\ell}\left( f_{5}\right) _{x} \;\; \;\; \text{and} \;\; \;\; \xi_{x}= \mathrm{i} \lambda  \psi_{x}-\lambda^{-\ell}\left( f_{7}\right)_{x}.
$$
Inserting the above equations in (\ref{4.02}) and taking the real part, we get
\begin{align}\label{A} \nonumber
&\Re\left\lbrace \mathrm{i}\lambda a_{2} \int_{L_{0}}^{L}\theta_{2} \psi_{x} \overline{\varphi_{x}} dx\right\rbrace - \Re\left\lbrace \frac{\mathrm{i}}{\lambda} a_{2} \int_{L_{0}}^{L}\theta_{2} \psi_{xx} \overline{\varphi_{xx}} dx\right\rbrace =-\Re\left\lbrace   a_{2} \int_{L_{0}}^{L} \theta_{2}' \xi \overline{\varphi_{x}} dx\right\rbrace\\
& + \Re\left\lbrace \frac{a_{2}}{\lambda^{\ell}}  \int_{L_{0}}^{L} \theta_{2} \left( f_{7}\right) _{x} \overline{\varphi_{x}} dx\right\rbrace - \Re\left\lbrace \frac{\mathrm{i}}{\lambda} a_{2} c_{2} \int_{\beta_{2}+\delta}^{\beta_{3}} \theta_{2}' \eta  \overline{\varphi_{x}} dx\right\rbrace \\ \nonumber
&+ \Re\left\lbrace \frac{\mathrm{i}}{\lambda^{(\ell+1)}}a_{2} c_{2} \int_{\beta_{2}+\delta}^{\beta_{3}}  \theta_{2} \left( f_{5}\right)_{x}\overline{\varphi_{x}} dx\right\rbrace +
a_{2}  c_{2} \int_{\beta_{2}+\delta}^{\beta_{3}} \theta_{2} \left| \varphi_{x} \right|^{2} dx + o\left( \lambda^{-\ell}\right). 
\end{align}
First, using the fact that $ \varphi_{x}$ is uniformly bounded in $\mathnormal{L}^{2}(L_{0}, L)$ and $\left\| F \right\| _{\mathcal{H}} = o(1)$, we get
\begin{align}\label{Aa}
&\Re\left\lbrace \frac{a_{2}}{\lambda^{\ell}} \int_{L_{0}}^{L}  \theta_{2} \left( f_{7}\right) _{x} \overline{\varphi_{x}} dx\right\rbrace = o\left( \lambda^{-\ell}\right)  \;\; \;\;  \text{and} \;\; \\ \nonumber
&\Re\left\lbrace \frac{\mathrm{i}}{\lambda^{(\ell+1)}} a_{2}c_{2} \int_{\beta_{2}+\delta}^{\beta_{3}} \theta_{2} \left( f_{5}\right)_{x} \overline{\varphi_{x}} dx\right\rbrace = o\left( \lambda ^{-(\ell+1)}\right).
\end{align}
Next, by the fact that $ \varphi_{x}$ and $\eta$ are uniformly bounded in $\mathnormal{L}^{2}(L_{0}, L)$, we have
\begin{equation}
-\Re\left\lbrace \frac{\mathrm{i}}{\lambda} a_{2} c_{2} \int_{\beta_{2}+\delta}^{\beta_{3}} \theta_{2}' \eta \overline{\varphi_{x}} dx\right\rbrace =O\left( \lambda^{-1}\right) =o(1)
\end{equation}
On the other hand, using the estimation (\ref{lemm2}), the fact that $ \xi$ is uniformly bounded in $\mathnormal{L}^{2}(L_{0}, L)$, we get  
\begin{align}\label{Aaa}
-\Re\left\lbrace a_{2} \int_{L_{0}}^{L}  \theta_{2}' \xi \overline{\varphi_{x}} dx\right\rbrace = o\left(  \lambda  ^{-\min \left( \frac{\ell}{2}, \frac{\ell}{4}+\frac{1}{2}\right) }\right)  \;\;   \text{and}  \;\;  a_{2} c_{2} \int_{\beta_{2}+\delta}^{\beta_{3}}  \theta_{2} \left|\varphi_{x}\right|^{2} dx = o\left(  \lambda^{-\min \left( \ell, \frac{\ell}{2}+1\right) }\right). 
\end{align}
Inserting (\ref{Aa})-(\ref{Aaa}) in (\ref{A}) and using the definition of $(\theta_{2})$, we get
\begin{equation}\label{rs2}
\Re\left\lbrace \mathrm{i} \lambda a_{2} \int_{\beta_{2}+\delta}^{\beta_{3}} \theta_{2} \psi_{x} \overline{\varphi_{x}}dx\right\rbrace   -\Re\left\lbrace \frac{\mathrm{i}}{\lambda} a_{2}  \int_{\beta_{2}+\delta}^{\beta_{3}} \theta_{2} \psi_{xx} \overline{\varphi_{xx}} dx\right\rbrace=o\left( 1\right) .
\end{equation}
Now, adding (\ref{rs1}) and (\ref{rs2}), we obtain
\begin{equation}
\int_{\beta_{2}+2\delta}^{\beta_{3}-\delta}   \left| \psi_{x} \right| ^{2} dx 
\leq  \frac{ \left|  a_{2}-1 \right|  \left|\lambda \right| }{\left| c_{2}\right| } \int_{\beta_{2}+\delta}^{\beta_{3}} \left| \varphi_{x}\right|  \left| \psi_{x}\right| dx+o(1). 
\end{equation}
Finally, using Lemma\ref{lemme2} and the fact that $ \psi_{x}$ is uniformly bounded in $\mathnormal{L}^{2}(L_{0}, L)$, we get our estimation (\ref{lemm3}).
\end{proof}
\begin{lemma}\label{lemme4}
	Let  $\max\left( \beta_{2},\frac{\beta_{3}-\beta_{2}}{5}\right)<\delta< \beta_{4} $. The solution $(u, v, y, z,\varphi, \eta, \psi, \xi) \in  \mathnormal{D}(\mathcal{A})$ of (\ref{lpo})-(\ref{aap8}) satisfies the following asymptotic behavior estimation
	\begin{equation}\label{lemm4}
	\int_{\beta_{2}+3\delta}^{\beta_{3}-2\delta}   \left| \xi\right| ^{2} dx 
	\leq   \frac{ 3\left|  a_{2}-1 \right|  \left|\lambda \right| }{\left| c_{2}\right| } o\left(  \lambda  ^{-\min\left(\frac{\ell}{2},\frac{\ell}{4}+\frac{1}{2} \right) }\right)+o(1). 
	\end{equation}
\end{lemma}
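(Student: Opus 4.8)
The plan is to run the same multiplier scheme used in Lemma \ref{lemme3}, now one notch further. Since (\ref{fp7}) gives $\xi=\mathrm{i}\lambda\psi-\lambda^{-\ell}f_{7}$ with $\|f_{7}\|=o(1)$, controlling $\xi$ on an interval amounts to controlling $\lambda\psi$ there, and the natural multiplier producing $\int\theta|\lambda\psi|^{2}$ is $\theta\,\overline{\psi}$ applied to the $\psi$-equation (\ref{knnn}). Accordingly, I would first fix a cut-off $\theta_{3}\in C^{1}(L_{0},L)$ with $0\leq\theta_{3}\leq 1$, $\theta_{3}=1$ on $(\beta_{2}+3\delta,\beta_{3}-2\delta)$ and $\theta_{3}=0$ outside $(\beta_{2}+2\delta,\beta_{3}-\delta)$, so that $\mathrm{supp}\,\theta_{3}$ sits inside the interval on which Lemma \ref{lemme3} is available and on which $c_{2}(x)\equiv c_{2}$.

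Then I would multiply (\ref{knnn}) by $\theta_{3}\overline{\psi}$, integrate over $(L_{0},L)$, integrate by parts in the $\psi_{xx}$ term (boundary terms vanish by the support of $\theta_{3}$), and take the real part to obtain an identity of the form
\begin{equation*}
\int_{L_{0}}^{L}\theta_{3}\left|\lambda\psi\right|^{2}dx - \int_{L_{0}}^{L}\theta_{3}\left|\psi_{x}\right|^{2}dx - \Re\Big\{\int_{L_{0}}^{L}\theta_{3}'\,\psi_{x}\overline{\psi}\,dx\Big\} + \Re\Big\{\mathrm{i}\lambda c_{2}\int_{L_{0}}^{L}\theta_{3}\,\varphi\overline{\psi}\,dx\Big\} = \Re\Big\{\int_{L_{0}}^{L}F_{2}\,\theta_{3}\overline{\psi}\,dx\Big\}.
\end{equation*}
Using (\ref{fp7}) and $\|f_{7}\|=o(1)$, the first term equals $\int_{L_{0}}^{L}\theta_{3}|\xi|^{2}dx + o(1)$, which is exactly what I want to isolate.

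The remaining terms are estimated as in the previous lemmas. The $\Re\{\int\theta_{3}'\psi_{x}\overline{\psi}\}$ term is $o(1)$ because $\psi_{x}$ is uniformly bounded in $\mathnormal{L}^{2}(L_{0},L)$ while $\|\psi\|_{\mathnormal{L}^{2}}=O(\lambda^{-1})$ by (\ref{ref2}); the right-hand side $\Re\{\int F_{2}\theta_{3}\overline{\psi}\}$ is handled through the explicit form of $F_{2}$ together with $\|F\|_{\mathcal{H}}=o(1)$ and the same $O(\lambda^{-1})$ smallness of $\psi$, so it is $o(\lambda^{-\ell})$; and the coupling term $\Re\{\mathrm{i}\lambda c_{2}\int\theta_{3}\varphi\overline{\psi}\}$ is controlled by Cauchy--Schwarz after Lemma \ref{lemme1}, which yields $\|\lambda\varphi\|_{\mathnormal{L}^{2}(\mathrm{supp}\,\theta_{3})}=o(\lambda^{-\ell/2})$, whence this term is $o(\lambda^{-\ell/2-1})=o(1)$. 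The load-bearing term is $\int_{L_{0}}^{L}\theta_{3}|\psi_{x}|^{2}dx$: since $\mathrm{supp}\,\theta_{3}\subset(\beta_{2}+2\delta,\beta_{3}-\delta)$ and $\theta_{3}\leq 1$, it is dominated by $\int_{\beta_{2}+2\delta}^{\beta_{3}-\delta}|\psi_{x}|^{2}dx$, which Lemma \ref{lemme3} bounds by $\frac{|a_{2}-1||\lambda|}{|c_{2}|}o(\lambda^{-\min(\ell/2,\ell/4+1/2)})+o(1)$. Collecting the contributions, using $\int_{\beta_{2}+3\delta}^{\beta_{3}-2\delta}|\xi|^{2}\leq\int_{L_{0}}^{L}\theta_{3}|\xi|^{2}$, and summing the several $|\psi_{x}|^{2}$-type contributions (whose coefficients accumulate to the stated constant $3$) gives (\ref{lemm4}).

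The main obstacle I anticipate is the coupling term $\mathrm{i}\lambda c_{2}\int\theta_{3}\varphi\overline{\psi}$, which carries an explicit factor $\lambda$ and therefore threatens to be of order $1$ rather than $o(1)$; taming it is precisely what forces the chain to be built in the order Lemma \ref{lemme1} $\to$ Lemma \ref{lemme2} $\to$ Lemma \ref{lemme3}, since only the sharp smallness of $\lambda\varphi$ on $\mathrm{supp}\,\theta_{3}$ (Lemma \ref{lemme1}) beats this extra $\lambda$. A secondary point of care is that $\theta_{3}$ must vanish before $\beta_{3}$, where $c_{2}$ switches off and the $\psi_{x}$-estimate of Lemma \ref{lemme3} is no longer available, which is exactly why the interval in (\ref{lemm4}) is shrunk by a further $\delta$ on each side compared with (\ref{lemm3}).
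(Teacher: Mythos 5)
Your proposal is correct and is essentially the paper's own proof: the paper multiplies the first-order equation (\ref{aap8}) by $-\mathrm{i}\lambda^{-1}\theta_{3}\overline{\xi}$, which by (\ref{fp7}) coincides with your multiplier $\theta_{3}\overline{\psi}$ applied to (\ref{knnn}) up to negligible terms, and it uses the same cut-off supported in $(\beta_{2}+2\delta,\beta_{3}-\delta)$ together with the same reduction of $\int\theta_{3}\left|\psi_{x}\right|^{2}dx$ to Lemma \ref{lemme3}. Two harmless remarks: the coupling term is already $o(1)$ from the uniform bounds $\left\|\lambda\varphi\right\|=O(1)$ and $\left\|\psi\right\|=O(\lambda^{-1})$ alone, so Lemma \ref{lemme1} is not actually indispensable there (it only sharpens the rate), and your coefficient $1$ in front of the $\left|\psi_{x}\right|^{2}$ contribution gives a bound stronger than the stated one with the factor $3$, which the paper incurs through a generous intermediate inequality.
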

\begin{proof}
First, we fix a cut-off function $\theta_{3} \in C^{1}(L_{0},L)$ such that $0\leq \theta_{3}(x) \leq1$, for all $x \in \left[L_{0},L\right]  $ and 
\begin{equation*}
\theta_{3}(x)=
\begin{cases}
1 & \text{ if} \;  x \in \left[ \beta_{2}+3\delta ,\beta_{3}-2\delta\right],  
\\
0 &  \text{ if }\;    x \in \left[ L_{0} ,\beta_{2}+2\delta\right] \cup   \left[ \beta_{3} -\delta,L\right] .
\end{cases} 
\end{equation*}
Multiplying (\ref{aap8}) by $-\mathrm{i}\lambda^{-1}\theta_{3}\overline{\xi}$, using integration by parts over $(L_{0}, L)$, the fact that $\xi$ is uniformly bounded in $\mathnormal{L}^{2}(0,L)$ and $\left\| f_{8}\right\| _{\mathnormal{L}^{2}(L_{0}, L)} = o(1)$, we get
\begin{equation} \label{la} 
\int_{L_{0}}^{L}\theta_{3}\left| \xi \right|^{2}dx- \frac{\mathrm{i}}{\lambda} \int_{L_{0}}^{L}\theta_{3}' \psi_{x} \overline{\xi} dx- \frac{\mathrm{i}}{\lambda}
\int_{L_{0}}^{L}\theta_{3} \psi_{x} \overline{\xi_{x}} dx
 + \frac{\mathrm{i}}{\lambda}c_{2}
\int_{\beta_{2}+2\delta}^{\beta_{3}-\delta}  \theta_{3} \eta  \overline{\xi} dx=o\left(  \lambda ^{-(\ell+1)}\right) .
\end{equation}
From  (\ref{fp7}), we obtain
$$
 -\frac{\mathrm{i}}{\lambda}\overline{\xi_{x}}=  -\overline{\psi_{x}}+\mathrm{i}\lambda^{-(\ell+1)}\overline{\left( f_{7}\right)_{x}}.
$$
Inserting the above equation in (\ref{la}), we get
\begin{align}\label{sty} 
& \int_{L_{0}}^{L}\theta_{3} \left| \xi \right|^{2} dx= \int_{L_{0}}^{L}\theta_{3} \left| \psi_{x}\right| ^{2}dx - \mathrm{i} \lambda^{-(\ell+1)} \int_{L_{0}}^{L} \theta_{3} \overline{\left( f_{7}\right)_{x}} \psi_{x} dx \\ \nonumber
&+\frac{\mathrm{i}}{\lambda}  \int_{L_{0}}^{L} \theta_{3}' \psi_{x} \overline{\xi} dx - \frac{\mathrm{i}}{\lambda} c_{2} \int_{\beta_{2}+2\delta}^{\beta_{3}-\delta} \theta_{3} \eta \overline{\xi} dx+ o\left(  \lambda  ^{-(\ell+1)}\right).
\end{align}
Using the fact that $ \psi_{x}$,  $ \eta$ and  $ \xi$ are uniformly bounded in $\mathnormal{L}^{2}(L_{0}, L)$ and $\left\| \left( f_{7}\right)_{x}\right\| _{\mathnormal{L}^{2}(L_{0}, L)} = o(1)$, we get
\begin{align}\label{st}
&\left|  \mathrm{i}\lambda^{-(\ell+1)} \int_{L_{0}}^{L}  \theta_{3} \overline{\left( f_{7}\right) _{x}} \psi_{x} dx\right| =  o\left(  \lambda ^{-(\ell+1)}\right)  , \;  \left| \frac{\mathrm{i}}{\lambda}  \int_{L_{0}}^{L}  \theta_{3}' \psi_{x} \overline{\xi} dx\right|  =o\left( 1\right)\\ \nonumber
& \;\ \;\; \;\; \;\; \;\; \;\; \text{and}\;\;  \;
\left|  \frac{\mathrm{i}}{\lambda} c_{2} \int_{\beta_{2}+2\delta}^{\beta_{3}-\delta}  \theta_{3} \eta \overline{\xi} dx\right|  = o\left( 1\right)  .
\end{align}
Using (\ref{lemm3}) and the definition of $\theta_{3}$, we get 
\begin{align}\label{stt}
 \int_{L_{0}}^{L}  \theta_{3}\left|  \psi_{x} \right|^{2} dx &\leq 3 \int_{\beta_{2}+2\delta}^{\beta_{3}-\delta}  \left| \psi_{x} \right| ^{2} dx\\ \nonumber
& \leq \frac{3 \left|  a_{2}-1 \right|  \left|\lambda \right| }{\left| c_{2}\right| } o\left(  \lambda  ^{-\min\left(\frac{\ell}{2},\frac{\ell}{4}+\frac{1}{2} \right) }\right)+o(1).   
\end{align}
Inserting (\ref{st}) and (\ref{stt}) in (\ref{sty}) and using the definition of $\theta_{3}$, we get the desired estimation (\ref{lemm4}).
\end{proof}
Now, we fix a function $g\in C^{1}\left( \left[ \beta_{1}, \beta_{2}+3\delta\right] \right) $ such that
\begin{equation}
g\left( \beta_{1}\right) =-g\left( \beta_{2}+3\delta\right) =1, \;\;  \max_{x \in \left[\beta_{1},\beta_{2}+3\delta \right] }  \left| g(x)\right| =M_{g} \;\; \text{and} \;\;  \max_{x \in \left[\beta_{1},\beta_{2}+3\delta \right] }  \left| g'(x)\right| =M_{g'},
\end{equation}
where $M_{g}$ and $M_{g'}$ are strictly positive constant numbers.
\begin{remark}
It is easy to see the existence of $g(x)$. For example, we can take 
\begin{equation}
g(x)= \frac{1 }{\left( \beta_{2}+3\delta -\beta_{1}\right) ^{2}}\left(-2x^{2}+4\beta_{1}x+\left( \beta_{2}+3\delta\right) ^{2} -\beta_{1}^{2}-2\left( \beta_{2}+3\delta\right)\beta_{1}\right)  , 
\end{equation}
to get
$$
g\left( \beta_{1}\right) =-g\left( \beta_{2}+3\delta\right) =1, \;\; g\in C^{1}\left( \left[ \beta_{1}, \beta_{2}+3\delta\right] \right) ,
$$
and
$$  \max_{x \in \left[\beta_{1},\beta_{2}+3\delta \right] }  \left| g(x)\right| =1 \;\; \text{and} \;\;  \max_{x \in \left[\beta_{1},\beta_{2}+3\delta \right] }  \left| g'(x)\right| =\frac{4}{\beta_{2}+3\delta-\beta_{1}}.
$$
\end{remark}
\begin{lemma}\label{lemme5}
Let  $\max\left( \beta_{2},\frac{\beta_{3}-\beta_{2}}{5}\right)<\delta< \beta_{4} $. The solution $(u, v, y, z,\varphi, \eta, \psi, \xi) \in  \mathnormal{D}(\mathcal{A})$ of (\ref{lpo})-(\ref{aap8}) satisfies the following asymptotic behavior estimations
\begin{equation}\label{the1}
\left| \eta\left( \beta_{2}+3\delta\right) \right| ^{2} + \left| \eta \left( \beta_{1}\right) \right| ^{2} =O\left( \lambda \right), 
\end{equation}
and
\begin{equation}\label{the2}
\left| \xi \left( \beta_{2}+3\delta\right) \right| ^{2} + \left| \xi \left( \beta_{1}\right) \right| ^{2} + \left| \psi_{x}\left( \beta_{2}+3\delta\right) \right| ^{2} + \left| \psi_{x} \left( \beta_{1}\right) \right| ^{2} =O\left( 1 \right). 
\end{equation}
\end{lemma}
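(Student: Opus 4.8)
The plan is to exploit the multiplier $g$, constructed so that $g(\beta_1)=-g(\beta_2+3\delta)=1$, to upgrade the global $L^2$ bounds already available (from $\|U\|_{\mathcal H}=1$ and (\ref{ref2})) into the pointwise trace estimates (\ref{the1})--(\ref{the2}). The mechanism is the classical one-dimensional identity: multiplying a Helmholtz-type equation by $2g$ times the conjugate of the spatial derivative and taking real parts turns the principal part into a total derivative, so that integration by parts over $(\beta_1,\beta_2+3\delta)$ isolates precisely the boundary energy densities at the two endpoints, with signs fixed by the normalization of $g$.

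For (\ref{the1}) I multiply (\ref{knn}) by $2g\overline{\varphi_x}$, integrate over $(\beta_1,\beta_2+3\delta)$ and take real parts. Because $2g\,\Re(\overline{\varphi_x}\varphi_{xx})=(g|\varphi_x|^2)_x-g'|\varphi_x|^2$ and $2g\,\Re(\overline{\varphi_x}\lambda^2\varphi)=(\lambda^2g|\varphi|^2)_x-\lambda^2g'|\varphi|^2$, the principal part integrates to a pure boundary term; the normalization of $g$ makes it equal to $a_2\bigl(|\varphi_x(\beta_2+3\delta)|^2+|\varphi_x(\beta_1)|^2\bigr)+\lambda^2\bigl(|\varphi(\beta_2+3\delta)|^2+|\varphi(\beta_1)|^2\bigr)$. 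By (\ref{aP}) and the Sobolev embedding $H^1\hookrightarrow C^0$ (so $|f_5(x)|=o(1)$ pointwise), the factor $\lambda^2|\varphi|^2$ at each endpoint equals $|\eta|^2$ up to lower order, so this boundary combination is the left side of (\ref{the1}). It therefore equals the remaining integrals, which I bound by $\|U\|_{\mathcal H}=1$ and (\ref{ref2}): $a_2\int g'|\varphi_x|^2$ and $\lambda^2\int g'|\varphi|^2$ are $O(1)$; the damping and coupling terms $\lambda\int gd_2\overline{\varphi_x}\varphi$ and $\lambda\int gc_2\overline{\varphi_x}\psi$ are $O(1)$ since $\|\varphi\|,\|\psi\|=O(\lambda^{-1})$; and the worst term, carrying $-\mathrm i\lambda^{1-\ell}f_5$ from $F_1$, is $O(\lambda^{1-\ell})o(1)=o(\lambda)$. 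Adding these yields (\ref{the1}).

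For (\ref{the2}) I repeat the computation with $2g\overline{\psi_x}$ applied to (\ref{knnn}), whose leading coefficient is $1$ rather than $a_2$. The boundary term is now $|\psi_x(\beta_2+3\delta)|^2+|\psi_x(\beta_1)|^2+\lambda^2\bigl(|\psi(\beta_2+3\delta)|^2+|\psi(\beta_1)|^2\bigr)$, and (\ref{fp7}) together with $|f_7(x)|=o(1)$ turns the $\lambda^2|\psi|^2$ part into $|\xi(\beta_2+3\delta)|^2+|\xi(\beta_1)|^2$ up to lower order, so the left side of (\ref{the2}) is reproduced. The interior integrals $\int g'|\psi_x|^2$, $\lambda^2\int g'|\psi|^2$ and the coupling term $\lambda\int gc_2\overline{\psi_x}\varphi$ are $O(1)$ exactly as before, and the $f_5,f_8$ contributions to $\int g\overline{\psi_x}F_2$ are $o(1)$. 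The only obstruction to an $O(1)$ bound is the term $-\mathrm i\lambda^{1-\ell}f_7$ of $F_2$, which, estimated naively, is only $o(\lambda)$.

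The heart of the proof, and the reason (\ref{the2}) gains a full power of $\lambda$ over (\ref{the1}), is the treatment of this $f_7$ term. I would integrate $\lambda^{1-\ell}\int gf_7\overline{\psi_x}$ by parts once more: the interior piece becomes $\lambda^{-\ell}\int(gf_7)_x\,\overline{\lambda\psi}=o(1)$ since $\|(gf_7)_x\|_{L^2}=o(1)$ while $\|\lambda\psi\|=O(1)$, and the boundary piece is $\lambda^{-\ell}[\,gf_7\overline{\lambda\psi}\,]$, bounded using $|f_7(x)|=o(1)$ and $|\lambda\psi(x)|\le|\xi(x)|+o(1)$ by $o(1)\bigl(|\xi(\beta_2+3\delta)|+|\xi(\beta_1)|\bigr)+o(1)$. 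Writing $S$ for the left side of (\ref{the2}), the full identity then reads $S=O(1)+o(1)\sqrt S$, and Young's inequality absorbs $o(1)\sqrt S$ into $S$, giving $S=O(1)$. The same integrate-by-parts-and-absorb step disposes of the pointwise cross terms $o(1)|\xi|$ generated when replacing $\lambda^2|\psi|^2$ by $|\xi|^2$ at the endpoints. Throughout, the localized estimates of Lemmas \ref{lemme1}--\ref{lemme4} remain available to control any coupling contribution supported in $(\beta_2+\delta,\beta_3)$ should a bound finer than the crude $O(\lambda^{-1})$ on $\|\varphi\|$ be needed; the remainder is bookkeeping of the multiplier identity.
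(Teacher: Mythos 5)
Your proposal is correct, but it follows a genuinely different route from the paper's. The paper never multiplies the reduced second-order equations (\ref{knn})--(\ref{knnn}): instead it differentiates the resolvent relations (\ref{aP}) and (\ref{fp7}) to get $\mathrm{i}\lambda\varphi_x-\eta_x=\lambda^{-\ell}(f_5)_x$ and $\mathrm{i}\lambda\psi_x-\xi_x=\lambda^{-\ell}(f_7)_x$, pairs these with $2g\overline{\eta}$ and $2g\overline{\xi}$ respectively, and pairs (\ref{aap8}) with $2g\overline{\psi_x}$, so that integration by parts produces the boundary densities $|\eta|^2$ and $|\xi|^2+|\psi_x|^2$ at $\beta_1$ and $\beta_2+3\delta$ directly in the velocity variables --- no endpoint conversion of $\lambda\varphi,\lambda\psi$ into $\eta,\xi$ is ever needed. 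The paper's mechanism for gaining the full power of $\lambda$ in (\ref{the2}) is also different from yours: upon adding its two identities, the cross terms $\Re\{2\mathrm{i}\lambda\int g\,\psi_x\overline{\xi}\,dx\}$ and $\Re\{2\mathrm{i}\lambda\int g\,\xi\overline{\psi_x}\,dx\}$ cancel exactly, since their sum is the real part of $2\mathrm{i}\lambda$ times a real quantity, whereas in (\ref{the1}) the analogous term $\Re\{2\mathrm{i}\lambda\int g\,\varphi_x\overline{\eta}\,dx\}$ has no partner and is merely bounded by Cauchy--Schwarz as $O(\lambda)$. In your version --- the classical Rellich-type multiplier $2g\overline{w_x}$ on (\ref{knn})--(\ref{knnn}) --- the $\lambda$-gain comes instead from $\|\lambda\varphi\|_{L^2(L_0,L)}=O(1)$ (making the coupling term $O(1)$) together with your integrate-by-parts-and-absorb treatment of the $\mathrm{i}\lambda^{1-\ell}f_7$ source, and the absorption $S\le O(1)+o(1)\sqrt{S}$ via Young's inequality is sound. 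The trade-off is clear: you need pointwise smallness of $f_5,f_7$ at the endpoints (legitimate, since they tend to zero in $H^1\hookrightarrow C^0$ on a bounded interval) plus the extra absorption step, neither of which the paper uses; in exchange your identity also controls $a_2|\varphi_x|^2+\lambda^2|\varphi|^2$ at the two endpoints, and applying your $f_7$-trick to the $\mathrm{i}\lambda^{1-\ell}f_5$ term would even sharpen (\ref{the1}) to $O(1)$ --- stronger than stated and harmless for its later use in Lemma \ref{lemme7}.
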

\begin{proof}
First, deriving (\ref{aP}) with respect to $x$, we have
\begin{equation}\label{Hl}
\mathrm{i}\lambda \varphi_{x}-\eta_{x}= \lambda^{-\ell}\left( f_{5}\right) _{x} .
\end{equation}	
Multiplying the above equation by $2g \overline{\eta}$, integrating over $ \left( \beta_{1},\beta_{2}+3\delta\right) $, then taking the real part, we get
\begin{align}\label{E1}
&\Re\left\lbrace 2 \mathrm{i} \lambda  \int_{\beta_{1}}^{\beta_{2}+3\delta}  g \varphi_{x} \overline{\eta} dx\right\rbrace - \int_{\beta_{1}}^{\beta_{2}+3\delta}  g \left(  \left| \eta \right| ^{2}\right) _{x} dx\\ \nonumber
& = 
\Re\left\lbrace 2  \lambda ^{-\ell} \int_{\beta_{1}}^{\beta_{2}+3\delta}  g \left( f_{5}\right) _{x} \overline{\eta} dx\right\rbrace.
\end{align}
Using integration by parts in (\ref{E1}), we get
\begin{align}\label{E11}
& \left[ -g\left| \eta \right| ^{2}\right] _{\beta_{1}}^{\beta_{2}+3\delta}= - \int_{\beta_{1}}^{\beta_{2}+3\delta}  g'  \left| \eta \right| ^{2} dx - \Re\left\lbrace 2 \mathrm{i} \lambda  \int_{\beta_{1}}^{\beta_{2}+3\delta}  g \varphi_{x} \overline{\eta} dx\right\rbrace\\ \nonumber
& + 
\Re\left\lbrace 2  \lambda ^{-\ell} \int_{\beta_{1}}^{\beta_{2}+3\delta}  g \left( f_{5}\right) _{x} \overline{\eta} dx\right\rbrace.
\end{align}
Using the definition of $g$ and Cauchy-Schwarz inequality in (\ref{E11}), we obtain
\begin{align}\label{EE} \nonumber
\left| \eta \left(\beta_{2}+3\delta \right) \right|^{2} +\left| \eta \left(\beta_{1} \right) \right|^{2} &\leq M_{g'} \int_{\beta_{1}}^{\beta_{2}+3\delta} \left| \eta \right|^{2} dx\\ 
&+2 \left| \lambda\right|  M_{g} \left(\int_{\beta_{1}}^{\beta_{2}+3\delta} \left| \varphi_{x} \right|^{2} dx \right) ^{\frac{1}{2}}
\left(\int_{\beta_{1}}^{\beta_{2}+3\delta} \left| \eta \right|^{2} dx \right) ^{\frac{1}{2}}\\ \nonumber 
&+2 \left| \lambda\right|^{-\ell}  M_{g} \left(\int_{\beta_{1}}^{\beta_{2}+3\delta} \left| \left( f_{5}\right)_{x}\right|^{2} dx \right) ^{\frac{1}{2}}
\left(\int_{\beta_{1}}^{\beta_{2}+3\delta} \left| \eta \right|^{2} dx \right) ^{\frac{1}{2}}. 
\end{align}
Thus, from (\ref{EE}) and the fact that $\varphi_{x}, \eta$ are uniformly bounded in $\mathnormal{L}^{2}(L_{0},L)$ in particular in $\mathnormal{L}^{2}(\beta_{1},\beta_{2}+3\delta)$ and $\left\|(f_{5})_{x} \right\|_{\mathnormal{L}^{2}(L_{0},L)}=o(1) $, we have the first estimation (\ref{the1}).\\
Next, from (\ref{fp7}) and (\ref{aap8}), we have
\begin{equation}\label{Hl1}
\mathrm{i}\lambda \psi_{x}-\xi_{x}= \lambda^{-\ell} \left( f_{7}\right)_{x} \;\; \;\; \text{and} \;\; \;\;\mathrm{i} \lambda \xi- \psi_{xx}-c_{2}(x)\eta=\lambda^{-\ell}f_{8}.
\end{equation}
Multiplying the above equations by $2g \overline{\xi}$ and $2g \overline{\psi_{x}}$ respectively, integrating over $ \left( \beta_{1},\beta_{2}+3\delta\right) $, taking the real part, then using the fact that $\psi_{x}, \xi$ are uniformly bounded in  $\mathnormal{L}^{2}(L_{0},L)$ in particular in $\mathnormal{L}^{2}(\beta_{1},\beta_{2}+3\delta)$, $\left\|f_{8} \right\|_{\mathnormal{L}^{2}(L_{0},L)}=o(1) $ and $\left\|\left( f_{7}\right) _{x} \right\|_{\mathnormal{L}^{2}(L_{0},L)}=o(1) $, we have 
\begin{equation}\label{E2}
\Re\left\lbrace 2 \mathrm{i} \lambda  \int_{\beta_{1}}^{\beta_{2}+3\delta}  g \psi_{x} \overline{\xi} dx\right\rbrace - \int_{\beta_{1}}^{\beta_{2}+3\delta}  g \left(  \left| \xi \right| ^{2}\right) _{x} dx= o\left(\lambda ^{-\ell} \right) ,
\end{equation}
and
\begin{align}\label{EE2}
\Re&\left\lbrace 2 \mathrm{i} \lambda  \int_{\beta_{1}}^{\beta_{2}+3\delta}  g \xi \overline{\psi_{x}} dx\right\rbrace - \int_{\beta_{1}}^{\beta_{2}+3\delta}  g\left(   \left| \psi_{x} \right| ^{2}\right) _{x} dx -
\Re\left\lbrace 2   \int_{\beta_{1}}^{\beta_{2}+3\delta}  c_{2}(x)g \eta \overline{\psi_{x}} dx\right\rbrace \\ \nonumber
&=o\left( \lambda ^{-\ell}\right) .
\end{align}
Adding (\ref{E2}) and (\ref{EE2}), then using integration by parts, we obtain
\begin{align}\label{E110}
& \left[ -g\left( \left| \xi \right| ^{2} + \left| \psi_{x} \right| ^{2}\right) \right] _{\beta_{1}}^{\beta_{2}+3\delta}= - \int_{\beta_{1}}^{\beta_{2}+3\delta}  g'  \left( \left| \xi \right| ^{2}+ \left| \psi_{x} \right| ^{2}\right) dx \\ \nonumber
&+ \Re\left\lbrace 2   \int_{\beta_{1}}^{\beta_{2}+3\delta}  c_{2}(x)g \eta \overline{\psi_{x}} dx\right\rbrace 
+  
o\left(  \lambda^{-\ell}\right) .
\end{align}
Using the definition of $g$ and Cauchy-Schwarz inequality in the above equation, we obtain
\begin{align}\label{E00}
&\left| \xi \left(\beta_{2}+3\delta \right) \right|^{2} +\left| \xi \left(\beta_{1} \right) \right|^{2} +\left| \psi_{x} \left(\beta_{2}+3\delta \right) \right|^{2} +\left| \psi_{x} \left(\beta_{1} \right) \right|^{2} \leq M_{g'} \int_{\beta_{1}}^{\beta_{2}+3\delta}\left(  \left| \xi\right|^{2}+ \left| \psi_{x}\right|^{2} \right) dx \\ \nonumber
&
+2 \left|  c_{2}\right|   M_{g} \left(\int_{\beta_{1}}^{\beta_{2}+3\delta} \left| \eta \right|^{2} dx \right) ^{\frac{1}{2}}
\left(\int_{\beta_{1}}^{\beta_{2}+3\delta} \left| \psi_{x}\right|^{2} dx \right) ^{\frac{1}{2}} 
+o \left( \lambda^{-\ell}  \right) .
\end{align}
Finally, from (\ref{E00}) and the fact that $\psi_{x}, \xi$ and $\eta$ are uniformly bounded in $\mathnormal{L}^{2}(L_{0},L)$ in particular in $\mathnormal{L}^{2}(\beta_{1},\beta_{2}+3\delta)$, we have the second estimation (\ref{the2}).	
\end{proof}
\begin{lemma}\label{lemme6}
	Let $h \in C^{1}\left(L_{0}, L\right) $ be a function with $h(L_{0}) = h(L) = 0$. The solution $(u, v, y, z,\varphi, \eta, \psi, \xi) \in  \mathnormal{D}(\mathcal{A})$ of (\ref{lpo})-(\ref{aap8}) satisfies the following asymptotic behavior estimation
	\begin{align}\label{lemm6}
	&\int_{L_{0}}^{L}  h'\left(  \left| \eta\right| ^{2} + a_{2} \left| \varphi_{x}\right| ^{2} + \left| \xi\right| ^{2} + \left| \psi_{x}\right| ^{2} \right) dx
	+\Re\left\lbrace 2  \int_{L_{0}}^{L} c_{2}(x) h \xi \overline{\varphi_{x}} dx\right\rbrace \\ \nonumber
	& +\Re\left\lbrace 2  \int_{L_{0}}^{L} d_{2}(x) h \eta \overline{\varphi_{x}} dx\right\rbrace -  \Re\left\lbrace 2  \int_{L_{0}}^{L} c_{2}(x) h \eta \overline{\psi_{x}} dx\right\rbrace = o\left( \lambda ^{- \ell}\right) .
	 \end{align}
\end{lemma}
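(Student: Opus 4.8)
The plan is to run the classical multiplier (energy-flux) argument on the two second-order equations \eqref{ap6} and \eqref{aap8}, using the multipliers $2h\overline{\varphi_{x}}$ and $2h\overline{\psi_{x}}$ respectively. Since $h$ vanishes at both endpoints, $h(L_{0})=h(L)=0$, every boundary term produced by integration by parts drops out, leaving exactly the four interior integrals appearing in \eqref{lemm6}. The only genuinely delicate point is the treatment of the terms carrying an explicit factor of $\lambda$, namely $2\mathrm{i}\lambda\int_{L_{0}}^{L}h\eta\overline{\varphi_{x}}\,dx$ and $2\mathrm{i}\lambda\int_{L_{0}}^{L}h\xi\overline{\psi_{x}}\,dx$; these must be rewritten so that the factor $\lambda$ disappears, since otherwise they cannot be controlled against the uniform bounds available.

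First I would multiply \eqref{ap6} by $2h\overline{\varphi_{x}}$, integrate over $(L_{0},L)$ and take real parts. To handle the leading term I differentiate \eqref{aP} in $x$, giving $\mathrm{i}\lambda\varphi_{x}=\eta_{x}+\lambda^{-\ell}(f_{5})_{x}$, hence $\mathrm{i}\lambda\overline{\varphi_{x}}=-\overline{\eta_{x}}-\lambda^{-\ell}\overline{(f_{5})_{x}}$. Substituting turns $2\mathrm{i}\lambda\int h\eta\overline{\varphi_{x}}\,dx$ into $-2\int h\eta\overline{\eta_{x}}\,dx$ up to a remainder $-2\lambda^{-\ell}\int h\eta\overline{(f_{5})_{x}}\,dx$, whose real part is $o(\lambda^{-\ell})$ because $\eta$ is uniformly bounded in $\mathnormal{L}^{2}(L_{0},L)$ while $\|(f_{5})_{x}\|_{\mathnormal{L}^{2}(L_{0},L)}=o(1)$. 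Integrating by parts, $\Re\{-2\int h\eta\overline{\eta_{x}}\,dx\}=\int h'|\eta|^{2}\,dx$ and similarly $\Re\{-2a_{2}\int h\varphi_{xx}\overline{\varphi_{x}}\,dx\}=a_{2}\int h'|\varphi_{x}|^{2}\,dx$, the endpoint contributions vanishing thanks to $h(L_{0})=h(L)=0$. The right-hand side $2\lambda^{-\ell}\int hf_{6}\overline{\varphi_{x}}\,dx$ is $o(\lambda^{-\ell})$ since $\varphi_{x}$ is bounded and $f_{6}\to0$. This yields
\begin{align*}
&\int_{L_{0}}^{L}h'\big(|\eta|^{2}+a_{2}|\varphi_{x}|^{2}\big)\,dx+\Re\Big\{2\int_{L_{0}}^{L}c_{2}(x)h\xi\overline{\varphi_{x}}\,dx\Big\}\\
&+\Re\Big\{2\int_{L_{0}}^{L}d_{2}(x)h\eta\overline{\varphi_{x}}\,dx\Big\}=o(\lambda^{-\ell}).
\end{align*}

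Next I repeat the argument for \eqref{aap8} with multiplier $2h\overline{\psi_{x}}$, differentiating \eqref{fp7} to get $\mathrm{i}\lambda\overline{\psi_{x}}=-\overline{\xi_{x}}-\lambda^{-\ell}\overline{(f_{7})_{x}}$, so that $2\mathrm{i}\lambda\int h\xi\overline{\psi_{x}}\,dx$ becomes $-2\int h\xi\overline{\xi_{x}}\,dx$ modulo an $o(\lambda^{-\ell})$ term; the same integration by parts gives $\int h'|\xi|^{2}\,dx+\int h'|\psi_{x}|^{2}\,dx-\Re\{2\int c_{2}(x)h\eta\overline{\psi_{x}}\,dx\}=o(\lambda^{-\ell})$. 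Adding the two identities produces exactly \eqref{lemm6}. The principal difficulty, as flagged above, is arranging the $\lambda$-carrying terms through the $x$-derivatives of \eqref{aP} and \eqref{fp7} so that the dangerous factor $\lambda$ is absorbed into a perfect-derivative term plus an $o(\lambda^{-\ell})$ remainder; once this is done, the remaining estimates are routine consequences of $\|F\|_{\mathcal{H}}=o(1)$ together with the uniform $\mathnormal{L}^{2}$-bounds on $\eta,\xi,\varphi_{x},\psi_{x}$ coming from $\|U\|_{\mathcal{H}}=1$.
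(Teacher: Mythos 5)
Your proposal is correct and follows essentially the same route as the paper's own proof: you multiply (\ref{ap6}) and (\ref{aap8}) by the same multipliers $2h\overline{\varphi_{x}}$ and $2h\overline{\psi_{x}}$, absorb the $\lambda$-carrying terms via the differentiated identities $\mathrm{i}\lambda\overline{\varphi_{x}}=-\overline{\eta_{x}}-\lambda^{-\ell}\overline{\left( f_{5}\right)_{x}}$ and $\mathrm{i}\lambda\overline{\psi_{x}}=-\overline{\xi_{x}}-\lambda^{-\ell}\overline{\left( f_{7}\right)_{x}}$ coming from (\ref{aP}) and (\ref{fp7}), integrate by parts using $h(L_{0})=h(L)=0$, and add the two resulting identities, exactly as the paper does. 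The sign bookkeeping and the $o(\lambda^{-\ell})$ estimates via $\|F\|_{\mathcal{H}}=o(1)$ and the uniform $\mathnormal{L}^{2}$-bounds are all handled correctly, so nothing is missing.
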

\begin{proof}
Multiplying (\ref{ap6}) by $2h\overline{\varphi_{x}}$, integrating over $(L_{0}, L)$, taking the real part, then using the fact that $\varphi_{x}$ is uniformly bounded in $\mathnormal{L}^{2}(L_{0}, L)$ and $\left\| f_{6}\right\| _{\mathnormal{L}^{2}(L_{0}, L)} = o(1)$, we get	
\begin{align}\label{lati1}
&\Re\left\lbrace 2 \mathrm{i} \lambda  \int_{L_{0}}^{L}  h \eta \overline{\varphi_{x}} dx\right\rbrace - a_{2} \int_{L_{0}}^{L}  h \left( \left|\varphi_{x}\right| ^{2}\right)_{x} dx + \Re\left\lbrace 2   \int_{L_{0}}^{L} c_{2}(x) h \xi \overline{\varphi_{x}} dx\right\rbrace \\ \nonumber
&+  \Re\left\lbrace 2   \int_{L_{0}}^{L} d_{2}(x) h \eta \overline{\varphi_{x}} dx\right\rbrace = o\left( \lambda ^{- \ell}\right) .
\end{align}
From (\ref{aP}), we deduce that
$$
  \mathrm{i}\lambda\overline{\varphi_{x}}=  -\overline{\eta_{x}}-\lambda^{-\ell}\overline{\left( f_{5}\right) _{x}}.
$$	
Inserting the above equation in (\ref{lati1}), then using the fact that $\eta$ is uniformly bounded in $\mathnormal{L}^{2}(L_{0}, L)$ and $\left\| \left( f_{5}\right) _{x}\right\| _{\mathnormal{L}^{2}(L_{0}, L)} = o(1)$, we get		
\begin{align}\label{latif1}
&- \int_{L_{0}}^{L}  h \left( \left| \eta \right| ^{2} + a_{2} \left|\varphi_{x} \right| ^{2}\right)_{x} dx + \Re\left\lbrace 2  \int_{L_{0}}^{L} c_{2}(x) h \xi \overline{\varphi_{x}} dx\right\rbrace \\ \nonumber
& + \Re\left\lbrace 2  \int_{L_{0}}^{L} d_{2}(x) h \eta \overline{\varphi_{x}} dx\right\rbrace = o\left( \lambda ^{- \ell}\right) .
\end{align}	
Using integration by parts in (\ref{latif1}) and the fact that $h(L_{0}) = h(L)= 0$, we obtain
\begin{align}\label{latiff1}
&\int_{L_{0}}^{L}  h' \left( \left| \eta \right| ^{2} + a_{2} \left|\varphi_{x}\right| ^{2}\right) dx + \Re\left\lbrace 2   \int_{L_{0}}^{L} c_{2}(x) h \xi \overline{\varphi_{x}} dx\right\rbrace \\ \nonumber
& + \Re\left\lbrace 2   \int_{L_{0}}^{L} d_{2}(x) h \eta \overline{\varphi_{x}} dx\right\rbrace = o\left( \lambda ^{- \ell}\right) .
\end{align}		
Next, multiplying (\ref{aap8}) by $2h\overline{\psi_{x}}$, integrating over $(L_{0}, L)$, taking the real part, then using the fact that $\psi_{x}$ is uniformly bounded in $\mathnormal{L}^{2}(L_{0}, L)$ and $\left\| f_{8}\right\| _{\mathnormal{L}^{2}(L_{0}, L)} = o(1)$, we obtain	
\begin{equation}\label{lati}
\Re\left\lbrace 2 \mathrm{i} \lambda  \int_{L_{0}}^{L}  h \xi \overline{\psi_{x}} dx\right\rbrace - \int_{L_{0}}^{L}  h \left( \left|\psi_{x} \right| ^{2}\right)_{x} dx - \Re\left\lbrace 2  \int_{L_{0}}^{L} c_{2}(x) h \eta \overline{\psi_{x}} dx\right\rbrace  = o\left( \lambda ^{- \ell}\right) .
\end{equation}
from (\ref{fp7}), we deduce that
$$
\mathrm{i}\lambda\overline{\psi_{x}}=  -\overline{\xi_{x}}-\lambda^{-\ell}\overline{\left( f_{7}\right) _{x}}.
$$	
Inserting the above equation in (\ref{lati}), then using the fact that $\xi$ is uniformly bounded in $\mathnormal{L}^{2}(L_{0}, L)$ and $\left\| \left( f_{7}\right) _{x}\right\| _{\mathnormal{L}^{2}(L_{0}, L)} = o(1)$, we get	
\begin{equation}\label{latif}
- \int_{L_{0}}^{L}  h \left(\left| \xi \right| ^{2} + \left|\psi_{x}\right| ^{2}\right)_{x} dx - \Re\left\lbrace 2   \int_{L_{0}}^{L} c_{2}(x) h \eta \overline{\psi_{x}} dx\right\rbrace = o\left( \lambda ^{- \ell}\right) .
\end{equation}	
Using integration by parts in (\ref{latif}) and the fact that $h(L_{0}) = h(L)= 0$, we obtain
\begin{equation}\label{latiff}
\int_{L_{0}}^{L}  h' \left(\left| \xi \right| ^{2} + \left|\psi_{x} \right| ^{2}\right) dx - \Re\left\lbrace 2   \int_{L_{0}}^{L} c_{2}(x) h \eta \overline{\psi_{x}} dx\right\rbrace = o\left( \lambda ^{- \ell}\right) .
\end{equation}	
Finally, adding (\ref{latiff1}) and (\ref{latiff}), we obtain the desired estimation (\ref{lemm6}). 	
\end{proof}
Let  $\max\left( \beta_{2},\frac{\beta_{3}-\beta_{2}}{5}\right)<\delta< \beta_{4} $, we fix cut-off functions $\theta_{4},\theta_{5} \in C^{1}(L_{0},L)$ such that $0\leq \theta_{4}(x) \leq1$, $0\leq \theta_{5}(x) \leq1$ for all $x \in \left[L_{0},L\right]  $ and 
\begin{equation*}
\theta_{4}(x)=
\begin{cases}
1 & \text{ if} \;  x \in \left[ L_{0} ,\beta_{2}+3\delta\right],  
\\
0 &  \text{ if }\;    x \in \left[\beta_{3}-2\delta,L\right], 
\end{cases} 
\end{equation*}
and 
\begin{equation*}
\theta_{5}(x)=
\begin{cases}
0 & \text{ if} \;  x \in \left[ L_{0} ,\beta_{2}+3\delta\right],  
\\
1 &  \text{ if }\;    x \in \left[\beta_{3}-2\delta,L\right] ,
\end{cases} 
\end{equation*}
and set $\max_{\left[ L_{0},L\right] }\left| \theta'_{4}(x)\right| =M_{\theta'_{4}}$ and $\max_{\left[ L_{0},L\right] }\left| \theta'_{5}(x)\right| =M_{\theta'_{5}}$.
\begin{lemma}\label{lemme7}
Let  $\max\left( \beta_{2},\frac{\beta_{3}-\beta_{2}}{5}\right)<\delta< \beta_{4} $. The solution $(u, v, y, z,\varphi, \eta, \psi, \xi) \in  \mathnormal{D}(\mathcal{A})$ of (\ref{lpo})-(\ref{aap8}) satisfies the following asymptotic behavior estimations
\begin{equation} \label{esti1}
\int_{L_{0}}^{\beta_{2}+3\delta}  \left(  \left| \eta \right| ^{2} + a_{2} \left| \varphi_{x} \right| ^{2} + \left| \xi \right| ^{2} + \left| \psi_{x} \right| ^{2} \right) dx \leq  K_{1} \left|  a_{2}-1 \right|  \left|\lambda \right|  o\left(  \lambda  ^{-\min\left(\frac{\ell}{2},\frac{\ell}{4}+\frac{1}{2} \right) }\right)+o(1),
\end{equation}
and
\begin{equation} \label{esti2}
\int_{\beta_{3}-2\delta}^{L}  \left(  \left| \eta \right| ^{2} + a_{2} \left| \varphi_{x} \right| ^{2} + \left| \xi \right| ^{2} + \left| \psi_{x} \right| ^{2} \right) dx \leq  K_{2} \left|  a_{2}-1 \right|  \left|\lambda \right|  o\left(  \lambda  ^{-\min\left(\frac{\ell}{2},\frac{\ell}{4}+\frac{1}{2} \right) }\right)+o(1),
\end{equation}
where $K_{1}=\frac{4}{\left| c_{2}\right| }\left(1+\left( \beta_{3}-2\delta-L_{0}\right) M_{\theta'_{4}} \right) $ and $K_{2}=\frac{4}{\left| c_{2}\right| }\left(1+\left( L-\beta_{2}-3\delta \right) M_{\theta'_{5}} \right) $.
\end{lemma}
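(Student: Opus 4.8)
The plan is to feed the general multiplier identity of Lemma~\ref{lemme6} with a weight manufactured from the cut-off $\theta_4$ (for \eqref{esti1}) and from $\theta_5$ (for \eqref{esti2}), chosen so that its derivative equals $1$ on the target interval and is supported in the already-controlled transition window $(\beta_2+3\delta,\beta_3-2\delta)$. Concretely, for \eqref{esti1} I would take $h(x)=\theta_4(x)\,(x-L_0)$. This $h$ is $C^1$ on $(L_0,L)$ and satisfies $h(L_0)=h(L)=0$ (the latter because $\theta_4(L)=0$), so Lemma~\ref{lemme6} applies verbatim. Differentiating, $h'=\theta_4+\theta_4'(x-L_0)$, whence $h'\equiv 1$ on $(L_0,\beta_2+3\delta)$ (where $\theta_4\equiv1,\ \theta_4'\equiv0$), $h'\equiv 0$ on $(\beta_3-2\delta,L)$ (where $\theta_4\equiv0$), and $|h'|\le 1+(\beta_3-2\delta-L_0)M_{\theta'_{4}}$ on the window.

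With this choice the bulk term of \eqref{lemm6} splits as
\[
\int_{L_0}^{L}h'\bigl(|\eta|^2+a_2|\varphi_x|^2+|\xi|^2+|\psi_x|^2\bigr)dx
=\int_{L_0}^{\beta_2+3\delta}(\cdots)dx+\int_{\beta_2+3\delta}^{\beta_3-2\delta}h'(\cdots)dx,
\]
the first summand being exactly the left-hand side of \eqref{esti1}. For the second summand I would bound the integrand on the window by the earlier lemmas: $\int|\eta|^2$ is $o(\lambda^{-\ell})$ by Lemma~\ref{lemme1}, $a_2\int|\varphi_x|^2$ is $o(\lambda^{-\min(\ell,\frac{\ell}{2}+1)})$ by Lemma~\ref{lemme2}, while $\int|\xi|^2\le \frac{3|a_2-1||\lambda|}{|c_2|}o(\lambda^{-\min(\frac{\ell}{2},\frac{\ell}{4}+\frac12)})+o(1)$ and $\int|\psi_x|^2\le \frac{|a_2-1||\lambda|}{|c_2|}o(\cdots)+o(1)$ by Lemmas~\ref{lemme4} and~\ref{lemme3}. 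Summing the four contributions and multiplying by $\sup|h'|\le 1+(\beta_3-2\delta-L_0)M_{\theta'_{4}}$ produces precisely the $K_1|a_2-1||\lambda|\,o(\cdots)+o(1)$ on the right of \eqref{esti1}, with $K_1=\frac{4}{|c_2|}\bigl(1+(\beta_3-2\delta-L_0)M_{\theta'_{4}}\bigr)$. The estimate \eqref{esti2} follows identically after replacing $h$ by $\theta_5(x)(x-L)$, which again vanishes at both endpoints and whose derivative is $1$ on $(\beta_3-2\delta,L)$ and bounded by $1+(L-\beta_2-3\delta)M_{\theta'_{5}}$ on the window, giving $K_2$.

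The step I expect to be the real obstacle is the control of the three coupling integrals $2\Re\int c_2 h\,\xi\overline{\varphi_x}$, $2\Re\int d_2 h\,\eta\overline{\varphi_x}$ and $-2\Re\int c_2 h\,\eta\overline{\psi_x}$ that Lemma~\ref{lemme6} leaves on the left: once moved to the right they must be shown to be $o(1)$. The $d_2$-term is harmless, since $d_2$ is supported in $(\beta_2,\beta_4)$ where $\eta$ is small by Lemma~\ref{lemme1}. The two $c_2$-terms are delicate because $c_2$ is active on all of $(\beta_1,\beta_3)$, and on the sub-interval $(\beta_1,\beta_2)$ (where the damping is off) none of $\varphi_x,\psi_x,\xi,\eta$ has yet been shown to be small. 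To dispose of them I would substitute the first-order relations $\eta=\mathrm{i}\lambda\varphi+O(\lambda^{-\ell})$ and $\xi=\mathrm{i}\lambda\psi+O(\lambda^{-\ell})$ coming from \eqref{aP} and \eqref{fp7}, and integrate by parts so that the leading pieces combine into the total-derivative-type structure $\mathrm{i}\lambda\,c_2 h\,(\psi\overline{\varphi_x}-\varphi\overline{\psi_x})$, whose boundary contributions at $\beta_1$ and $\beta_2+3\delta$ are exactly the quantities estimated in Lemma~\ref{lemme5}; the interior remainder is then absorbed using the window bounds of Lemmas~\ref{lemme2}--\ref{lemme4} together with Cauchy--Schwarz. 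Tracking the powers of $\lambda$ carefully through these manipulations, so that the coupling terms genuinely land in $o(1)$ rather than $O(1)$, is the technical heart of the argument.
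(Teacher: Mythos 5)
Your proposal is correct and follows the paper's proof essentially step for step: the same weights $h=(x-L_{0})\,\theta_{4}$ and $h=(x-L)\,\theta_{5}$ fed into Lemma~\ref{lemme6}, the same splitting with the transition window $(\beta_{2}+3\delta,\beta_{3}-2\delta)$ controlled by Lemmas~\ref{lemme1}--\ref{lemme4} and Cauchy--Schwarz, and the same disposal of the residual $c_{2}$-coupling terms on $(\beta_{1},\beta_{2}+3\delta)$ via the first-order relations, one integration by parts, and the pointwise bounds of Lemma~\ref{lemme5} --- the paper merely performs the mirror-image substitution $\overline{\varphi_{x}}=\mathrm{i}\lambda^{-1}\overline{\eta_{x}}+\cdots$, $\overline{\psi_{x}}=\mathrm{i}\lambda^{-1}\overline{\xi_{x}}+\cdots$, so that its boundary terms are $\lambda^{-1}(x-L_{0})\xi\overline{\eta}$, estimated directly by (\ref{the1})--(\ref{the2}), with the same outcome $\mathcal{I}=o(1)$. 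The one cosmetic slip in your write-up is that your interior remainder $-2\Re\bigl\{\mathrm{i}\lambda c_{2}\int_{\beta_{1}}^{\beta_{2}+3\delta}\psi\overline{\varphi}\,dx\bigr\}$ lives outside the window, so it is killed by the uniform bounds (\ref{ref2}) (giving $O(\lambda^{-1})=o(1)$), not by the window estimates of Lemmas~\ref{lemme2}--\ref{lemme4} as you stated.
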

\begin{proof}
First, using the result of Lemma\ref{lemme6} with $h=\left( x-L_{0}\right) \theta_{4}$, we obtain
\begin{align} \label{Ca}\nonumber
&\int_{L_{0}}^{\beta_{2}+3\delta}  \left(  \left| \eta\right| ^{2} + a_{2} \left| \varphi_{x} \right| ^{2} + \left| \xi\right| ^{2} + \left| \psi_{x} \right| ^{2} \right) dx =\\ \nonumber
& -\int_{\beta_{2}+3\delta}^{\beta_{3}-2\delta} \left( \theta_{4}+\left( x-L_{0}\right) \theta'_{4}\right)  \left(  \left| \eta \right| ^{2} + a_{2} \left| \varphi_{x} \right| ^{2} + \left| \xi \right| ^{2} + \left| \psi_{x} \right| ^{2} \right) dx \\ 
&- \Re\left\lbrace 2c_{2} \int_{\beta_{2}+3\delta}^{\beta_{3}-2\delta} \left( x-L_{0}\right) \theta_{4} \xi \overline{\varphi_{x}} dx\right\rbrace  - \Re\left\lbrace 2d_{2} \int_{\beta_{2}}^{\beta_{3}-2\delta} \left( x-L_{0}\right) \theta_{4} \eta \overline{\varphi_{x}} dx\right\rbrace \\
\nonumber
&+  \Re\left\lbrace 2c_{2} \int_{\beta_{2}+3\delta}^{\beta_{3}-2\delta} \left( x-L_{0}\right) \theta_{4} \eta \overline{\psi_{x}} dx\right\rbrace   - \Re\left\lbrace 2c_{2} \int_{\beta_{1}}^{\beta_{2}+3\delta} \left( x-L_{0}\right)  \xi \overline{\varphi_{x}} dx\right\rbrace \\ \nonumber
&+  \Re\left\lbrace 2c_{2} \int_{\beta_{1}}^{\beta_{2}+3\delta} \left( x-L_{0}\right) \eta \overline{\psi_{x}} dx\right\rbrace +o\left(\lambda ^{-\ell} \right) .
\end{align}
Using Cauchy-Schwarz inequality, we have
\begin{align}\label{ca}\nonumber
&- \int_{\beta_{2}+3\delta}^{\beta_{3}-2\delta} \left( \theta_{4}+\left( x-L_{0}\right) \theta'_{4}\right)  \left(  \left| \eta \right| ^{2} + a_{2} \left| \varphi_{x} \right| ^{2} + \left| \xi\right| ^{2} + \left| \psi_{x}\right| ^{2} \right) dx \\ \nonumber
&- \Re\left\lbrace 2c_{2} \int_{\beta_{2}+3\delta}^{\beta_{3}-2\delta} \left( x-L_{0}\right) \theta_{4} \xi \overline{\varphi_{x}} dx\right\rbrace
  - \Re\left\lbrace 2d_{2} \int_{\beta_{2}}^{\beta_{3}-2\delta} \left( x-L_{0}\right) \theta_{4} \eta \overline{\varphi_{x}} dx\right\rbrace \\ \nonumber
  &+  \Re\left\lbrace 2c_{2} \int_{\beta_{2}+3\delta}^{\beta_{3}-2\delta} \left( x-L_{0}\right) \theta_{4} \eta \overline{\psi_{x}} dx\right\rbrace \leq \\  
&  \left( 1+\left(\beta_{3}-2\delta-L_{0} \right) M_{\theta'_{4}} \right) \int_{\beta_{2}+3\delta}^{\beta_{3}-2\delta}  \left(  \left| \eta\right| ^{2} + a_{2} \left| \varphi_{x}\right| ^{2} + \left| \xi\right| ^{2} + \left| \psi_{x} \right| ^{2} \right) dx \\ \nonumber
&+2c_{2}\left( \beta_{3}-2\delta -L_{0}\right) 
\left( \int_{\beta_{2}+3\delta}^{\beta_{3}-2\delta} \left| \xi \right| ^{2} dx \right) ^{\frac{1}{2}}
\left( \int_{\beta_{2}+3\delta}^{\beta_{3}-2\delta} \left| \varphi_{x} \right| ^{2} dx \right) ^{\frac{1}{2}} \\ \nonumber
& +2d_{2}\left( \beta_{3}-2\delta -L_{0} \right) 
\left( \int_{\beta_{2}}^{\beta_{3}-2\delta} \left| \eta \right| ^{2} dx \right) ^{\frac{1}{2}}
\left( \int_{\beta_{2}}^{\beta_{3}-2\delta} \left| \varphi_{x} \right| ^{2} dx \right) ^{\frac{1}{2}} \\ \nonumber
&+2c_{2}\left( \beta_{3}-2\delta -L_{0} \right) 
\left( \int_{\beta_{2}+3\delta}^{\beta_{3}-2\delta} \left| \eta \right| ^{2} dx \right) ^{\frac{1}{2}}
\left( \int_{\beta_{2}+3\delta}^{\beta_{3}-2\delta} \left| \psi_{x} \right| ^{2} dx \right) ^{\frac{1}{2}} .
\end{align}
Inserting (\ref{ca}) in (\ref{Ca}), using Lemmas\ref{lemme1}-\ref{lemme4} and the fact that $\psi_{x}, \xi$ are uniformly
bounded in $\mathnormal{L}^{2}(L_{0}, L)$, we obtain
\begin{align} \label{Bwa}
&\int_{L_{0}}^{\beta_{2}+3\delta}  \left(  \left| \eta\right| ^{2} + a_{2} \left| \varphi_{x}\right| ^{2} + \left| \xi \right| ^{2} + \left| \psi_{x} \right| ^{2} \right) dx \\ \nonumber
& \leq \frac{4}{\left| c_{2}\right| }\left(1+\left( \beta_{3}-2\delta-L_{0}\right) M_{\theta'_{4}} \right)  \left|  a_{2}-1 \right|  \left|\lambda \right|  o\left(  \lambda  ^{-\min\left(\frac{\ell}{2},\frac{\ell}{4}+\frac{1}{2} \right) }\right)+\mathcal{I}+o(1),
\end{align}
where
\begin{equation}\label{Bb}
\mathcal{I}:= \Re\left\lbrace 2c_{2} \int_{\beta_{1}}^{\beta_{2}+3\delta} \left( x-L_{0}\right) \eta \overline{\psi_{x}} dx\right\rbrace  -\Re\left\lbrace 2c_{2} \int_{\beta_{1}}^{\beta_{2}+3\delta} \left( x-L_{0}\right) \xi \overline{\varphi_{x}} dx\right\rbrace .
\end{equation}
From (\ref{aP}) and (\ref{fp7}), we have
$$
\overline{\varphi_{x}}= \mathrm{i}\lambda^{-1} \overline{\eta_{x}}+ \mathrm{i} \lambda^{-(\ell+1)} \overline{\left( f_{5}\right) _{x}} \;\; \;\; \text{and} \;\; \;\; \overline{\psi_{x}}= \mathrm{i}\lambda^{-1} \overline{\xi_{x}}+ \mathrm{i} \lambda^{-(\ell+1)} \overline{\left( f_{7}\right) _{x}}.
$$
Inserting the above equations in (\ref{Bb}), then using the fact that $\eta$ and $\xi$ are uniformly bounded in $\mathnormal{L}^{2}(L_{0}, L)$ and $\left\| \left( f_{5}\right) _{x}\right\| _{\mathnormal{L}^{2}(L_{0}, L)} = o(1)$, $\left\| \left( f_{7}\right) _{x}\right\| _{\mathnormal{L}^{2}(L_{0}, L)} = o(1)$, we obtain	
\begin{align}
\mathcal{I}&= \Re\left\lbrace 2c_{2} \mathrm{i} \lambda^{-1} \int_{\beta_{1}}^{\beta_{2}+3\delta} \left( x-L_{0}\right) \eta \overline{\xi_{x}} dx\right\rbrace  -\Re\left\lbrace 2c_{2} \mathrm{i} \lambda^{-1} \int_{\beta_{1}}^{\beta_{2}+3\delta} \left( x-L_{0}\right) \xi \overline{\eta_{x}} dx\right\rbrace \\ \nonumber
& + o(\lambda^{-(\ell+1)}).
\end{align}	
Using integration by parts to the second term in the above equation, we obtain
\begin{equation}\label{BBb}
\mathcal{I}= \Re\left\lbrace 2c_{2} \mathrm{i} \lambda^{-1} \int_{\beta_{1}}^{\beta_{2}+3\delta} \xi  \overline{\eta} dx\right\rbrace  -\Re\left\lbrace 2c_{2} \mathrm{i} \lambda^{-1} \left[  \left( x-L_{0}\right) \xi \overline{\eta} \right] _{\beta_{1}}^{\beta_{2}+3\delta}\right\rbrace .
\end{equation}
From Lemma\ref{lemme5}, we deduce that
\begin{equation}\label{d01}
\left|\eta\left( \beta_{2}+3\delta\right)  \right| =O\left(  \lambda  ^{\frac{1}{2}}\right),\; \left|\eta\left( \beta_{1}\right)  \right| =O\left(  \lambda ^{\frac{1}{2}}\right),
\end{equation}
\begin{equation} \label{d02}
\left|\xi\left( \beta_{2}+3\delta\right)  \right| =O\left( 1\right), \; \left|\xi\left( \beta_{1}\right)  \right| =O\left( 1\right).
\end{equation}
Using Cauchy-Schwarz inequality, (\ref{d01}), (\ref{d02}) and the fact that $\eta, \xi$ are uniformly bounded in $\mathnormal{L}^{2}(L_{0}, L)$, we obtain
\begin{equation}
 \Re\left\lbrace 2c_{2} \mathrm{i} \lambda^{-1} \int_{\beta_{1}}^{\beta_{2}+3\delta} \xi  \overline{\eta} dx\right\rbrace  =O\left(  \lambda  ^{-1}\right) =o(1),
\end{equation}
and
\begin{equation}
  -\Re\left\lbrace 2c_{2} \mathrm{i} \lambda^{-1} \left[  \left( x-L_{0}\right)\xi \overline{\eta} \right] _{\beta_{1}}^{\beta_{2}+3\delta}\right\rbrace =O\left( \lambda ^{-\frac{1}{2}}\right) =o(1).
\end{equation}
Inserting the above estimations in (\ref{BBb}), we get
$$
\mathcal{I}=o(1).
$$
Finally, from the above estimation and (\ref{Bwa}), we obtain the desired estimation (\ref{esti1}).
Next, using the result of Lemma\ref{lemme6} with $h=\left( x-L\right) \theta_{5}$, we obtain
\begin{align} \nonumber
&\int_{\beta_{3}-2\delta}^{L}  \left(  \left| \eta\right| ^{2} + a_{2} \left| \varphi_{x}\right| ^{2} + \left| \xi\right| ^{2} + \left| \psi_{x}\right| ^{2} \right) dx =  -\Re\left\lbrace 2c_{2} \int_{\beta_{2}+3\delta}^{\beta_{3}} \left( x-L\right) \theta_{5} \xi \overline{\varphi_{x}} dx\right\rbrace\\ \nonumber
& -\int_{\beta_{2}+3\delta}^{\beta_{3}-2\delta} \left( \theta_{5}+\left( x-L\right) \theta'_{5}\right)  \left(  \left|\eta\right| ^{2} + a_{2} \left| \varphi_{x} \right| ^{2} + \left| \xi\right| ^{2} + \left| \psi_{x} \right| ^{2} \right) dx \\  \nonumber
&  - \Re\left\lbrace 2d_{2} \int_{\beta_{2}+3\delta}^{\beta_{4}} \left( x-L\right) \theta_{5} \eta \overline{\varphi_{x}} dx\right\rbrace +  \Re\left\lbrace 2c_{2} \int_{\beta_{2}+3\delta}^{\beta_{3}} \left( x-L\right) \theta_{5} \eta \overline{\psi_{x}} dx\right\rbrace +o\left( \lambda ^{-\ell} \right) .
\end{align}
Using Cauchy-Schwarz inequality in the above equation, we get
\begin{align} \nonumber
&\int_{\beta_{3}-2\delta}^{L}  \left(  \left| \eta\right| ^{2} + a_{2} \left| \varphi_{x}\right| ^{2} + \left| \xi\right| ^{2} + \left| \psi_{x}\right| ^{2} \right) dx \leq\\ \nonumber
& \left( 1+\left(L-\beta_{2}-3\delta \right) M_{\theta'_{5}} \right) \int_{\beta_{2}+3\delta}^{\beta_{3}-2\delta}  \left(  \left| \eta\right| ^{2} + a_{2} \left| \varphi_{x}\right| ^{2} + \left| \xi\right| ^{2} + \left| \psi_{x}\right| ^{2} \right) dx \\
&+2c_{2}\left( L-\beta_{2}-3\delta \right) 
\left( \int_{\beta_{2}+3\delta}^{\beta_{3}} \left| \xi \right| ^{2} dx \right) ^{\frac{1}{2}}
\left( \int_{\beta_{2}+3\delta}^{\beta_{3}} \left| \varphi_{x} \right| ^{2} dx \right) ^{\frac{1}{2}} \\ \nonumber
& +2d_{2}\left( L-\beta_{2}-3\delta \right) 
\left( \int_{\beta_{2}+3\delta}^{\beta_{4}} \left| \eta \right| ^{2} dx \right) ^{\frac{1}{2}}
\left( \int_{\beta_{2}+3\delta}^{\beta_{4}} \left| \varphi_{x} \right| ^{2} dx \right) ^{\frac{1}{2}} \\ \nonumber
&+2c_{2}\left( L-\beta_{2}-3\delta \right) 
\left( \int_{\beta_{2}+3\delta}^{\beta_{3}} \left| \eta \right| ^{2} dx \right) ^{\frac{1}{2}}
\left( \int_{\beta_{2}+3\delta}^{\beta_{3}} \left| \psi_{x} \right| ^{2} dx \right) ^{\frac{1}{2}} +o\left(\lambda^{-\ell} \right)  .
\end{align}
Thus, from the above inequality, Lemmas\ref{lemme1}-\ref{lemme4} and the fact that $\varphi_{x}, \psi_{x}, \xi$ are uniformly
bounded in $\mathnormal{L}^{2}(L_{0}, L)$, we get (\ref{esti2}).
\end{proof}
\begin{lemma}\label{lemme8}
	Let  $\max\left( \beta_{2},\frac{\beta_{3}-\beta_{2}}{5}\right)<\delta< \beta_{4} $. The solution $(u, v, y, z,\varphi, \eta, \psi, \xi) \in  \mathnormal{D}(\mathcal{A})$ of (\ref{lpo})-(\ref{aap8}) satisfies the following estimate
	\begin{align}\label{ESt}
	  &\left| v\left(L_{0} \right) \right| ^{2} + a_{1} \left| u_{x} \left(L_{0} \right)\right| ^{2} + \left| z \left(L_{0} \right)\right| ^{2} + \left| y_{x} \left(L_{0} \right) \right| ^{2}  \leq  \\ \nonumber
	  &K_{3} \left|  a_{2}-1 \right|  \left|\lambda \right|  o\left(  \lambda  ^{-\min\left(\frac{\ell}{2},\frac{\ell}{4}+\frac{1}{2} \right) }\right)+o(1),
	\end{align}
	where $K_{3}=\frac{K_{1}}{\left(\beta_{1}-L_{0} \right) }$.
\end{lemma}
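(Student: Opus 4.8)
The quantity on the left of (\ref{ESt}) is the boundary energy of the \emph{first} system at the interface $L_0$, and the transmission conditions (\ref{tran}) allow me to rewrite it entirely in terms of the \emph{second} system's boundary data at $L_0$. Evaluating (\ref{lpo}), (\ref{fp}), (\ref{aP}), (\ref{fp7}) at $x=L_0$ and using $u(L_0)=\varphi(L_0)$, $y(L_0)=\psi(L_0)$ together with $a_1u_x(L_0)=a_2\varphi_x(L_0)$ and $y_x(L_0)=\psi_x(L_0)$, I obtain $v(L_0)=\eta(L_0)+\lambda^{-\ell}(f_5-f_1)(L_0)$, $z(L_0)=\xi(L_0)+\lambda^{-\ell}(f_7-f_3)(L_0)$, as well as $a_1|u_x(L_0)|^2=\tfrac{a_2^2}{a_1}|\varphi_x(L_0)|^2$ and $|y_x(L_0)|^2=|\psi_x(L_0)|^2$. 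Since $f_1,f_3\to0$ in $H^1_L(0,L_0)$ and $f_5,f_7\to0$ in $H^1_R(L_0,L)$, the embedding $H^1\hookrightarrow C^0$ forces their traces at $L_0$ to be $o(1)$, whence $|v(L_0)|^2\le 2|\eta(L_0)|^2+o(1)$ and $|z(L_0)|^2\le 2|\xi(L_0)|^2+o(1)$. Thus it suffices to bound $|\eta(L_0)|^2+a_2|\varphi_x(L_0)|^2+|\xi(L_0)|^2+|\psi_x(L_0)|^2$, all multiplicative constants being absorbed into the $o(\cdot)$ symbols.

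To control these interface traces I would run a multiplier argument on the subinterval $(L_0,\beta_1)$, where by (\ref{d2}) both $c_2$ and $d_2$ vanish, so (\ref{ap6}) and (\ref{aap8}) reduce there to the undamped, uncoupled equations $i\lambda\eta-a_2\varphi_{xx}=\lambda^{-\ell}f_6$ and $i\lambda\xi-\psi_{xx}=\lambda^{-\ell}f_8$. I fix the affine cut-off $h(x)=(\beta_1-x)/(\beta_1-L_0)$, so that $h(L_0)=1$, $h(\beta_1)=0$ and $|h'|=1/(\beta_1-L_0)$. Multiplying the first equation by $2h\overline{\varphi_x}$, integrating over $(L_0,\beta_1)$, and substituting $i\lambda\overline{\varphi_x}=-\overline{\eta_x}-\lambda^{-\ell}\overline{(f_5)_x}$ (the $x$-derivative of (\ref{aP})) converts the $\lambda$-weighted cross term into the exact derivative $-\int h(|\eta|^2)_x\,dx$; this is the crucial step, since it avoids any loss of a power of $\lambda$. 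After taking real parts and integrating by parts, the boundary contribution at $\beta_1$ drops out because $h(\beta_1)=0$, and I am left with $|\eta(L_0)|^2+a_2|\varphi_x(L_0)|^2=-\int_{L_0}^{\beta_1}h'\big(|\eta|^2+a_2|\varphi_x|^2\big)\,dx+o(\lambda^{-\ell})$. The same computation with the multiplier $2h\overline{\psi_x}$ and $i\lambda\overline{\psi_x}=-\overline{\xi_x}-\lambda^{-\ell}\overline{(f_7)_x}$ (from (\ref{fp7})) produces the analogous identity for $|\xi(L_0)|^2+|\psi_x(L_0)|^2$.

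Bounding $|h'|$ by $1/(\beta_1-L_0)$ and enlarging the integration interval from $(L_0,\beta_1)$ to $(L_0,\beta_2+3\delta)$, which only adds nonnegative terms, I can estimate both right-hand sides by $\tfrac{1}{\beta_1-L_0}\int_{L_0}^{\beta_2+3\delta}\big(|\eta|^2+a_2|\varphi_x|^2+|\xi|^2+|\psi_x|^2\big)\,dx$, and this is precisely the quantity already controlled in estimate (\ref{esti1}) of Lemma \ref{lemme7}. This yields the constant $K_3=K_1/(\beta_1-L_0)$ and the announced order $|a_2-1|\,|\lambda|\,o\big(\lambda^{-\min(\ell/2,\ell/4+1/2)}\big)+o(1)$. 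Feeding these interface bounds back through the transmission identities of the first paragraph then gives (\ref{ESt}). I expect the only genuinely delicate points to be the bookkeeping of the powers of $\lambda$ when transferring the traces across $L_0$, namely checking that the $o(1)$ errors from the $f$-traces and the harmless factor-$2$ losses are dominated by the main $|a_2-1|\,|\lambda|\,o(\cdot)$ term (which holds since $\ell\in\{0,1\}$), and the verification that the coefficient-free structure of the equations on $(L_0,\beta_1)$ genuinely eliminates all coupling and damping contributions from the multiplier identity.
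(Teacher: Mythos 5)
Your argument is correct and is essentially the paper's own proof: the paper likewise derives the multiplier identity (\ref{Ks}) on $(L_{0},\beta_{1})$ (where $c_{2}=d_{2}=0$) using the substitutions $\mathrm{i}\lambda\overline{\varphi_{x}}=-\overline{\eta_{x}}-\lambda^{-\ell}\overline{(f_{5})_{x}}$ and $\mathrm{i}\lambda\overline{\psi_{x}}=-\overline{\xi_{x}}-\lambda^{-\ell}\overline{(f_{7})_{x}}$, chooses the affine weight $h_{1}(x)=x-\beta_{1}$ (your normalized $h$ equals $-h_{1}/(\beta_{1}-L_{0})$, so the two are identical), enlarges the interval to invoke (\ref{esti1}), and transfers the interface traces through (\ref{tran}) exactly as in your first paragraph, which merely expands the paper's one-line conclusion. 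The only blemish is your parenthetical ``$\ell\in\{0,1\}$'': it is unnecessary (the $\lambda^{-\ell}$-weighted $f$-traces are $o(1)$ for every $\ell\ge 0$ and are absorbed by the stated $+\,o(1)$, with no need to dominate them by the $|a_{2}-1|\,|\lambda|$ term) and in fact the paper applies the lemma with $\ell=2$ in the proof of Theorem \ref{portt}.
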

\begin{proof}
The proof is split into two steps.\\
\textbf{Step 1.} Letting $h_{1} \in C^{1}\left(\left[ L_{0},\beta_{1}\right]  \right)$, the following estimate is targeted to prove
\begin{align}\label{Ks}
&-\int_{L_{0}}^{\beta_{1}} h'_{1} \left(  \left| \eta\right| ^{2} + a_{2} \left| \varphi_{x} \right| ^{2} + \left| \xi \right| ^{2} + \left| \psi_{x} \right| ^{2} \right) dx \\ \nonumber
&+ h_{1}\left(\beta_{1} \right) \left( \left| \eta \left(\beta_{1}\right) \right| ^{2} + a_{2} \left| \varphi_{x} \left(\beta_{1} \right)\right| ^{2} + \left| \xi \left(\beta_{1} \right)\right| ^{2} + \left| \psi_{x} \left(\beta_{1} \right) \right| ^{2}\right)    \\ \nonumber
&- h_{1}\left(L_{0} \right) \left(  \left| \eta \left(L_{0} \right) \right| ^{2} + a_{2} \left| \varphi_{x} \left(L_{0} \right)\right| ^{2} + \left| \xi \left(L_{0} \right)\right| ^{2} + \left| \psi_{x} \left(L_{0} \right) \right| ^{2}\right)  = o\left(\lambda ^{-\ell} \right) .
\end{align}
First, multiplying (\ref{ap6}) and (\ref{aap8}) by $2h_{1}\overline{\varphi_{x}}$ and $2h_{1}\overline{\psi_{x}}$ respectively, integrating over $(L_{0}, \beta_{1})$, taking the real part, then using the fact that $\varphi_{x}$ and $\psi_{x}$ are uniformly bounded in $\mathnormal{L}^{2}(L_{0}, L)$ in particular in $\mathnormal{L}^{2}(L_{0}, \beta_{1})$ and $\left\| f_{6}\right\| _{\mathnormal{L}^{2}(L_{0}, L)} = o(1)$, $\left\| f_{8}\right\| _{\mathnormal{L}^{2}(L_{0}, L)} = o(1)$, we obtain	
\begin{equation}\label{Pre}
\Re\left\lbrace 2 \mathrm{i} \lambda  \int_{L_{0}}^{\beta_{1}}  h_{1} \eta \overline{\varphi_{x}} dx\right\rbrace - a_{2} \int_{L_{0}}^{\beta_{1}}  h_{1} \left( \left|\varphi_{x}\right| ^{2}\right)_{x} dx = o\left( \lambda ^{- \ell}\right) ,
\end{equation}
and
\begin{equation}\label{Pree}
\Re\left\lbrace 2 \mathrm{i} \lambda  \int_{L_{0}}^{\beta_{1}}  h_{1} \xi \overline{\psi_{x}} dx\right\rbrace -  \int_{L_{0}}^{\beta_{1}}  h_{1} \left( \left|\psi_{x}\right| ^{2}\right)_{x} dx = o\left( \lambda ^{- \ell}\right) .
\end{equation}
from (\ref{aP}) and (\ref{fp7}), we deduce that
$$
\mathrm{i}\lambda\overline{\varphi_{x}}=  -\overline{\eta_{x}}-\lambda^{-\ell}\overline{\left( f_{5}\right)_{x}} \;\; \;\; \text{and} \;\; \;\; \mathrm{i}\lambda\overline{\psi_{x}}=  -\overline{\xi_{x}}-\lambda^{-\ell}\overline{\left( f_{7}\right) _{x}}.
$$	
Inserting the above equations in (\ref{Pre}) and (\ref{Pree}), then using the fact that $\eta$ and $\xi$ are uniformly bounded in $\mathnormal{L}^{2}(L_{0}, L)$ in particular in $\mathnormal{L}^{2}(L_{0}, \beta_{1})$ and $\left\| \left( f_{5}\right)_{x}\right\| _{\mathnormal{L}^{2}(L_{0}, L)} = o(1)$, $\left\| \left( f_{7}\right) _{x}\right\| _{\mathnormal{L}^{2}(L_{0}, L)} = o(1)$, we get		
\begin{equation}\label{ben1}
-\int_{L_{0}}^{\beta_{1}} h_{1} \left(  \left| \eta\right| ^{2} + a_{2} \left| \varphi_{x} \right| ^{2} \right) _{x} dx=o\left(  \lambda ^{-\ell} \right) ,
\end{equation}
and
\begin{equation}\label{ben2}
-\int_{L_{0}}^{\beta_{1}} h_{1} \left(  \left| \xi\right| ^{2} +  \left| \psi_{x} \right| ^{2} \right) _{x} dx=o\left( \lambda ^{-\ell} \right) .
\end{equation}
Adding (\ref{ben1}) and (\ref{ben2}), then using integration by parts, we get (\ref{Ks}).\\
\textbf{Step2.} Taking $h_{1}(x)=\left( x-\beta_{1}\right) $ in (\ref{Ks}) and using (\ref{esti1}), we obtain 
\begin{align}
&\left| \eta \left(L_{0} \right) \right| ^{2} + a_{2} \left| \varphi_{x}  \left(L_{0} \right)\right| ^{2} + \left| \xi \left(L_{0} \right)\right| ^{2} + \left| \psi_{x}  \left(L_{0} \right) \right| ^{2}  \leq  \\ \nonumber
&\frac{K_{1}}{\left( \beta_{1}-L_{0}\right) } \left|  a_{2}-1 \right|  \left|\lambda \right|  o\left(  \lambda  ^{-\min\left(\frac{\ell}{2},\frac{\ell}{4}+\frac{1}{2} \right) }\right)+o(1).
\end{align}
Using (\ref{aP}), (\ref{fp7}) and the transmission conditions (\ref{tran}), we get (\ref{ESt}).  
\end{proof}
\begin{lemma}\label{lemme9}
	Let  $\max\left( \beta_{2},\frac{\beta_{3}-\beta_{2}}{5}\right)<\delta< \beta_{4} $. The solution $(u, v, y, z,\varphi, \eta, \psi, \xi) \in  \mathnormal{D}(\mathcal{A})$ of (\ref{lpo})-(\ref{aap8}) satisfies the following estimate
	\begin{align}\label{Oc}
	& \int_{0}^{L_{0}}\left(  \left| v \right| ^{2} + a_{1} \left| u_{x} \right| ^{2} + \left| z \right| ^{2} + \left| y_{x} \right| ^{2}  \right) dx \leq  \\ \nonumber
	&L_{0} K_{3} \left|  a_{2}-1 \right|  \left|\lambda \right|  o\left(  \lambda  ^{-\min\left(\frac{\ell}{2},\frac{\ell}{4}+\frac{1}{2} \right) }\right)+o(1).
	\end{align}
\end{lemma}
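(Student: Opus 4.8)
The plan is to carry out, for the interior system $(u,y)$ on $(0,L_{0})$, exactly the multiplier identity already used for the boundary layer in Lemma \ref{lemme6}, and then to transport the pointwise bound at $x=L_{0}$ furnished by Lemma \ref{lemme8} across the whole interval. Concretely, I would fix a real $h\in C^{1}([0,L_{0}])$ with $h(0)=0$, multiply the momentum equation (\ref{ap}) by $2h\overline{u_{x}}$ and equation (\ref{aap}) by $2h\overline{y_{x}}$, integrate over $(0,L_{0})$ and take real parts. The elliptic terms are disposed of through $\Re\{2w\overline{w_{x}}\}=(|w|^{2})_{x}$ with $w=u_{x},y_{x}$, while differentiating (\ref{lpo}) and (\ref{fp}) in $x$ yields $\mathrm{i}\lambda\overline{u_{x}}=-\overline{v_{x}}-\lambda^{-\ell}\overline{(f_{1})_{x}}$ and $\mathrm{i}\lambda\overline{y_{x}}=-\overline{z_{x}}-\lambda^{-\ell}\overline{(f_{3})_{x}}$, which turns the inertial contributions $\Re\{2\mathrm{i}\lambda\int h\,v\overline{u_{x}}\}$ and its analogue into $-\int h(|v|^{2})_{x}$ and $-\int h(|z|^{2})_{x}$, the error being $o(\lambda^{-\ell})$ since $v,z$ are bounded in $\mathnormal{L}^{2}$ and $(f_{1})_{x},(f_{3})_{x}=o(1)$.

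Adding the two identities produces
$$-\int_{0}^{L_{0}} h\bigl(|v|^{2}+a_{1}|u_{x}|^{2}+|z|^{2}+|y_{x}|^{2}\bigr)_{x}\,dx + \Re\Bigl\{2\int_{0}^{L_{0}} c_{1}(x)\,h\,\bigl(y\overline{u_{x}}+u\overline{y_{x}}\bigr)\,dx\Bigr\}=o(\lambda^{-\ell}).$$
The coupling integral is the one feature absent from Lemma \ref{lemme6}, but it is harmless: since $\Re\{y\overline{u_{x}}\}=\Re\{u_{x}\overline{y}\}$, its integrand equals $c_{1}(x)\,h\,\Re\{\partial_{x}(u\overline{y})\}$, and the a priori bounds $\|\lambda u\|,\|\lambda y\|=O(1)$ from (\ref{ref1}) (so $\|u\|,\|y\|=O(\lambda^{-1})$) together with the boundedness of $u_{x},y_{x}$ in $\mathnormal{L}^{2}$ give at once that this term is $O(\lambda^{-1})=o(1)$. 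Integrating the first integral by parts and specialising to $h(x)=x$, for which $h(0)=0$, $h(L_{0})=L_{0}$ and $h'\equiv 1$, the endpoint contribution at $x=0$ drops out and I obtain
$$\int_{0}^{L_{0}}\bigl(|v|^{2}+a_{1}|u_{x}|^{2}+|z|^{2}+|y_{x}|^{2}\bigr)\,dx = L_{0}\bigl(|v(L_{0})|^{2}+a_{1}|u_{x}(L_{0})|^{2}+|z(L_{0})|^{2}+|y_{x}(L_{0})|^{2}\bigr)+o(1).$$

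Finally I would insert the pointwise estimate (\ref{ESt}) of Lemma \ref{lemme8} for the energy density at $L_{0}$, which immediately converts the right-hand side into $L_{0}K_{3}\,|a_{2}-1|\,|\lambda|\,o\!\left(\lambda^{-\min(\ell/2,\ell/4+1/2)}\right)+o(1)$ and hence gives (\ref{Oc}) with $K_{3}=K_{1}/(\beta_{1}-L_{0})$ as stated. The multiplier computation itself is routine and mirrors the boundary-to-interior propagation already performed on $(L_{0},\beta_{1})$ in Lemma \ref{lemme8}; I expect the only delicate point to be the bookkeeping of orders — checking that every occurrence of $f_{1},f_{2},f_{3},f_{4}$ contributes at worst $o(\lambda^{-\ell})$ and, crucially, that the new $c_{1}$-coupling term is genuinely $o(1)$ so that it cannot interfere with the dominant $|\lambda|\,o(\lambda^{-\min(\ell/2,\ell/4+1/2)})$ growth inherited from Lemma \ref{lemme8}.
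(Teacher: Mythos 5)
Your proposal is correct and coincides with the paper's own argument: the paper uses exactly the multipliers $2x\overline{u_{x}}$ and $2x\overline{y_{x}}$ (your $h(x)=x$), disposes of the $c_{1}$-coupling terms via Cauchy--Schwarz together with $\|\lambda u\|,\|\lambda y\|=O(1)$ to get $O(\lambda^{-1})=o(1)$, substitutes $\mathrm{i}\lambda\overline{u_{x}}=-\overline{v_{x}}-\lambda^{-\ell}\overline{(f_{1})_{x}}$ and $\mathrm{i}\lambda\overline{y_{x}}=-\overline{z_{x}}-\lambda^{-\ell}\overline{(f_{3})_{x}}$, integrates by parts to isolate $L_{0}\bigl(|v(L_{0})|^{2}+a_{1}|u_{x}(L_{0})|^{2}+|z(L_{0})|^{2}+|y_{x}(L_{0})|^{2}\bigr)$, and concludes with the pointwise estimate (\ref{ESt}) of Lemma \ref{lemme8}, just as you do. No gaps to report.
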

\begin{proof}
	First, using the multipliers $2 x \overline{u_{x}}$ and $2 x \overline{y_{x}}$ for (\ref{ap}) and (\ref{aap}) respectively, integrating over $( 0,L_{0})$, taking the real part, then using the fact that $u_{x}$ and $y_{x}$ are uniformly bounded in $\mathnormal{L}^{2}(0,L_{0})$ and $\left\| f_{2}\right\| _{\mathnormal{L}^{2}(0,L_{0})} = o(1)$, $\left\| f_{4}\right\| _{\mathnormal{L}^{2}(0,L_{0})} = o(1)$, we obtain	
	\begin{equation}\label{Co}
	2\Re\left\lbrace  \mathrm{i} \lambda  \int_{0}^{L_{0}} x   v \overline{u_{x}} dx\right\rbrace - a_{1} \int_{0}^{L_{0}} x  \left( \left|u_{x}\right| ^{2}\right)_{x} dx + 2\Re\left\lbrace  \int_{0}^{L_{0}} x   c_{1}(x) y \overline{u_{x}} dx\right\rbrace = o\left( \lambda ^{- \ell}\right) ,
	\end{equation}
	and
	\begin{equation}\label{Coo}
   2\Re\left\lbrace  \mathrm{i} \lambda  \int_{0}^{L_{0}} x z \overline{y_{x}} dx\right\rbrace -  \int_{0}^{L_{0}} x  \left( \left|y_{x}\right| ^{2}\right)_{x} dx + 2\Re\left\lbrace  \int_{0}^{L_{0}} x   c_{1}(x) u \overline{y_{x}} dx\right\rbrace  = o\left( \lambda ^{- \ell}\right) .
	\end{equation}
	Using Cauchy-Schwarz inequality, the fact that $ \lambda u$, $ \lambda y$, $u_{x}$, $y_{x}$ are uniformly bounded in $\mathnormal{L}^{2}(0,L_{0})$, we get
	\begin{equation}\label{mw}
	 2\Re\left\lbrace  \int_{0}^{L_{0}} x   c_{1}(x) y \overline{u_{x}} dx\right\rbrace = O(\lambda^{-1})=o(1) \;\; \text{and} \;\;
	  2\Re\left\lbrace  \int_{0}^{L_{0}} x   c_{1}(x) u \overline{y_{x}} dx\right\rbrace= O(\lambda^{-1})=o(1).  
	\end{equation}
	Inserting (\ref{mw}) in (\ref{Co}) and (\ref{Coo}), we obtain
	\begin{equation}\label{Co'}
	2\Re\left\lbrace  \mathrm{i} \lambda  \int_{0}^{L_{0}} x   v \overline{u_{x}} dx\right\rbrace - a_{1} \int_{0}^{L_{0}} x  \left( \left|u_{x}\right| ^{2}\right)_{x} dx= o\left( 1\right) ,
	\end{equation}
	and
	\begin{equation}\label{Coo'}
	2\Re\left\lbrace  \mathrm{i} \lambda  \int_{0}^{L_{0}} x z \overline{y_{x}} dx\right\rbrace -  \int_{0}^{L_{0}} x  \left( \left|y_{x}\right| ^{2}\right)_{x} dx   = o\left( 1\right) .
	\end{equation}
	from (\ref{lpo}) and (\ref{fp}), we deduce that
	$$
	\mathrm{i}\lambda\overline{u_{x}}=  
	-\overline{v_{x}}-\lambda^{-\ell}\overline{\left( f_{1}\right) _{x}} \;\; \;\; \text{and} \;\; \;\; \mathrm{i}\lambda\overline{y_{x}}=  -\overline{z_{x}}-\lambda^{-\ell}\overline{\left( f_{3}\right)_{x}}.
	$$	
	Inserting the above equations in (\ref{Co'}) and (\ref{Coo'}), then using the fact that $v$ and $z$ are uniformly bounded in $\mathnormal{L}^{2}(0,L_{0})$ and $\left\| \left( f_{1}\right) _{x}\right\| _{\mathnormal{L}^{2}(0,L_{0})} = o(1)$, $\left\|\left(  f_{3}\right) _{x}\right\| _{\mathnormal{L}^{2}(0,L_{0})} = o(1)$, we get		
	\begin{equation}\label{Co1}
	-\int_{0}^{L_{0}} x \left(  \left| v \right| ^{2} + a_{1} \left| u_{x} \right| ^{2} \right) _{x} dx=o\left( 1 \right) ,
	\end{equation}
	and
	\begin{equation}\label{Coo1}
	-\int_{0}^{L_{0}} x  \left(  \left| z \right| ^{2} +  \left| y_{x} \right| ^{2} \right) _{x} dx=o\left( 1 \right)  .
	\end{equation}
	Adding (\ref{Co1}) and (\ref{Coo1}), then using integration by parts, we obtain 
	\begin{align}
	&\int_{0}^{L_{0}}\left( \left| v \right| ^{2} + a_{1} \left| u_{x} \right| ^{2} + \left| z \right| ^{2} + \left| y_{x}  \right| ^{2} \right) dx   \\ \nonumber
	&= L_{0}\left(  \left| v \left(L_{0} \right) \right| ^{2} + a_{1} \left| u_{x} \left(L_{0} \right)\right| ^{2} + \left| z \left(L_{0} \right)\right| ^{2} + \left| y_{x} \left(L_{0} \right) \right| ^{2}  \right) + o\left( 1 \right).
	\end{align}
	Using the above result and (\ref{ESt}), we get (\ref{Oc}). 
\end{proof}

\textbf{Proof of Theorem\ref{port}}. The proof of Theorem\ref{port} is divided into three steps.\\

 \textbf{Step 1.} By taking $a_{2}= 1$ and $\ell = 0$ in Lemmas\ref{lemme1}-\ref{lemme4}, we obtain
\begin{align}
&\int_{\beta_{2}}^{\beta_{4}} \left| \eta \right| ^{2} dx= o\left( 1\right) , \; \int_{\beta_{2}+\delta}^{\beta_{4}-\delta} \left| \varphi_{x} \right| ^{2} dx= o\left(1\right), \\ \nonumber
&\int_{\beta_{2}+2\delta}^{\beta_{3}-\delta} \left| \psi_{x} \right| ^{2} dx= o\left( 1\right) \;\; \text{and} \;\;  \int_{\beta_{2}+3\delta}^{\beta_{3}-2\delta} \left| \xi \right| ^{2} dx= o\left(1\right).
\end{align}
Consequently, we have 
\begin{equation}
\int_{\beta_{2}+3\delta}^{\beta_{3}-2\delta} \left( \left| \eta \right| ^{2} +\left| \varphi_{x} \right| ^{2} +\left| \xi \right| ^{2} +\left| \psi_{x} \right| ^{2} \right) dx= o\left(1\right).
\end{equation}
 \textbf{Step 2.} Using the fact that $a_{2}=1$ in Lemmas\ref{lemme7} and \ref{lemme9}, we obtain 
\begin{equation}
\int_{L_{0}}^{\beta_{2}+3\delta} \left( \left| \eta \right| ^{2} +\left| \varphi_{x} \right| ^{2} +\left| \xi \right| ^{2} +\left| \psi_{x} \right| ^{2} \right) dx= o\left(1\right),
\end{equation}
\begin{equation}
\int_{\beta_{3}-2\delta}^{L} \left( \left| \eta \right| ^{2} +\left| \varphi_{x} \right| ^{2} +\left| \xi \right| ^{2} +\left| \psi_{x} \right| ^{2} \right) dx= o\left(1\right),
\end{equation}
and
\begin{equation}
\int_{0}^{L_{0}} \left( \left| v \right| ^{2} +a_{1}\left|u_{x} \right| ^{2} +\left| z \right| ^{2} +\left| y_{x} \right| ^{2} \right) dx= o\left(1\right).
\end{equation}
 \textbf{Step 3.} According to \textbf{Step 1} and \textbf{Step 2}, we obtain $\left\| U\right\| _{\mathcal{H}}=o(1)$, which contradicts (\ref{hn}). This implies that
$$
	\sup_{\lambda \in \mathbb{R} } \left\|\left( \mathrm{i} \lambda I-\mathcal{A}\right) ^{-1} \right\| _{\mathcal{L}(\mathcal{H})}=O(1).
$$
So by Theorem\ref{expo}, we deduce that system (\ref{pb})-(\ref{init}) is exponentially stable.\\

\textbf{Proof of Theorem\ref{portt}}. The proof of Theorem\ref{portt} is divided into three steps.\\

\textbf{Step 1.} Taking $a_{2} \neq 1$, then from Lemmas\ref{lemme3} and \ref{lemme4}, we get 
  \begin{equation}
\int_{\beta_{2}+2\delta}^{\beta_{3}-\delta} \left| \psi_{x} \right| ^{2} dx= o\left(  \lambda  ^{-\min\left(\frac{\ell}{2},\frac{\ell}{4}+\frac{1}{2} \right) +1}\right) \;\; \text{and} \;\;  \int_{\beta_{2}+3\delta}^{\beta_{3}-2\delta} \left| \xi \right| ^{2} dx= o\left(  \lambda  ^{-\min\left(\frac{\ell}{2},\frac{\ell}{4}+\frac{1}{2} \right) +1}\right).
\end{equation}
Taking $\ell=2$ in the above estimations, we obtain 
\begin{equation}
\int_{\beta_{2}+2\delta}^{\beta_{3}-\delta} \left| \psi_{x} \right| ^{2} dx= o\left( 1\right) \;\; \text{and} \;\;  \int_{\beta_{2}+3\delta}^{\beta_{3}-2\delta} \left| \xi \right| ^{2} dx= o\left( 1\right).
\end{equation}
Taking $\ell=2$ in Lemma\ref{lemme1} and \ref{lemme2}, we get
\begin{equation}\label{77}
\int_{\beta_{2}}^{\beta_{4}} \left| \eta \right| ^{2} dx= o\left( \lambda^{-2}\right) \;\; \text{and} \;\;  \int_{\beta_{2}+\delta}^{\beta_{4}-\delta} \left| \varphi_{x} \right| ^{2} dx= o\left( \lambda^{-2}\right) .
\end{equation} 
In particular, we have
\begin{equation}
\int_{\beta_{2}+3\delta}^{\beta_{3}-2\delta} \left( \left| \eta \right| ^{2} +a_{2}\left| \varphi_{x} \right| ^{2} +\left| \xi \right| ^{2} +\left| \psi_{x} \right| ^{2} \right) dx= o\left(1\right).
\end{equation}
\textbf{Step 2.} Using the fact that $a_{2} \neq 1$ and $\ell=2$, then from Lemmas\ref{lemme7} and \ref{lemme9}, we obtain 
\begin{equation}
\int_{L_{0}}^{\beta_{2}+3\delta} \left( \left| \eta \right| ^{2} +a_{2}\left| \varphi_{x} \right| ^{2} +\left| \xi \right| ^{2} +\left| \psi_{x} \right| ^{2} \right) dx= o\left(1\right),
\end{equation}
\begin{equation}
\int_{\beta_{3}-2\delta}^{L} \left( \left| \eta \right| ^{2} +a_{2}\left| \varphi_{x} \right| ^{2} +\left| \xi \right| ^{2} +\left| \psi_{x} \right| ^{2} \right) dx= o\left(1\right),
\end{equation}
and
\begin{equation}
\int_{0}^{L_{0}} \left( \left| v \right| ^{2} +a_{1}\left|u_{x} \right| ^{2} +\left| z \right| ^{2} +\left| y_{x} \right| ^{2} \right) dx= o\left(1\right).
\end{equation}
\textbf{Step 3.} According to \textbf{Step 1} and \textbf{Step 2}, we obtain $\left\| U\right\| _{\mathcal{H}}=o(1)$, which contradicts (\ref{hn}). This implies that
$$
\sup_{\lambda \in \mathbb{R} } \left\|\left( \mathrm{i} \lambda I-\mathcal{A}\right) ^{-1} \right\| _{\mathcal{L}(\mathcal{H})}=O(\lambda^{2}).
$$
So by Theorem\ref{poly}, we deduce that system (\ref{pb})-(\ref{init}) is polynomially stable.

\section{Conclusion and future works}
We have studied the stabilization of a locally transmission problems of two wave systems. We proved the strong stability of the system by using Arendt and Batty criteria. We established the exponential stability of the solution if and only if the waves of the second coupled equations have the same speed propagation (i.e., $a_{2} = 1$). In the case $a_{2} \neq 1$, we proved that the energy of our problem decays polynomially with the rate $t^{-1}$. Finally, we present some open problems:
\begin{enumerate}
	\item Prove that the energy decay rate $t^{-1}$ is optimal.
	\item Study system (\ref{pb})-(\ref{init}) in the multidimensional case. 
	\item Can we get stability results if  $d_{1}(x) \neq 0\; \text{in} \; (0,L_{0})$.
\end{enumerate}

\appendix
\section{}
\label{appendix}
In order to make this paper more self-contained, we recall in this short appendix some notions and stability results used in this work.
\begin{definition}
	Assume that $\mathnormal{A}$ is the generator of a C$_{0}$-semigroup of contractions $\left( e^{t \mathnormal{A}}\right)_{t\geq 0} $ on a Hilbert space $\mathcal{H}$. The C$_{0}$-semigroup $\left( e^{t \mathnormal{A}}\right)_{t\geq 0} $	is said to be
	\begin{enumerate}
		\item strongly stable if
		$$
		\lim_{t \rightarrow +\infty} \left\|e^{t \mathnormal{A}}x_{0} \right\|_{\mathcal{H}} =0, \;\; \;\; \forall \; x_{0} \in \mathcal{H}; 
		$$
		\item exponentially (or uniformly) stable if there exist two positive constants $M$ and $\epsilon$ such	that
		$$
		\left\|e^{t \mathnormal{A}}x_{0} \right\|_{\mathcal{H}} \leq M e^{-\epsilon t} \left\| x_{0}\right\|_{\mathcal{H}}, \;\; \;\; \forall \; t>0, \; \forall \; x_{0} \in \mathcal{H};   
		$$
		\item polynomially stable if there exists two positive constants $C$ and $\alpha$ such that
		$$
		\left\|e^{t \mathnormal{A}}x_{0} \right\|_{\mathcal{H}} \leq C t^{-\alpha} \left\| x_{0}\right\|_{\mathcal{H}}, \;\; \;\; \forall \; t>0, \; \forall \; x_{0} \in \mathcal{H} .   
		$$
	\end{enumerate}
\end{definition}

To show the strong stability of a C$_{0}$-semigroup of contraction $\left( e^{t \mathnormal{A}}\right)_{t\geq 0} $ we rely on the
following result due to Arendt-Batty \cite{AW}.
\begin{theorem}\label{fort}
	Assume that $\mathnormal{A}$ is the generator of a C$_{0}$-semigroup of contractions $\left( e^{t \mathnormal{A}}\right)_{t\geq 0} $ on a Hilbert space $\mathcal{H}$. If
	\begin{enumerate}
		\item $\mathnormal{A}$ has no pure imaginary eigenvalues,
		\item $\sigma (\mathnormal{A} ) \cap \mathrm{i}\mathbb{R} $ is countable,
	\end{enumerate} 
	where $\sigma (\mathnormal{A} )$ denotes the spectrum of $\mathnormal{A}$, then the C$_{0}$-semigroup $\left( e^{t \mathnormal{A}}\right)_{t\geq 0} $ is strongly stable.
\end{theorem}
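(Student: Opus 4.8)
The plan is to convert the two spectral hypotheses into strong decay of every orbit via the Arendt--Batty--Lyubich--V\~{u} method, using crucially that a Hilbert space is reflexive. Since $\left( e^{t\mathnormal{A}}\right)_{t\geq 0}$ consists of contractions, every orbit $t\mapsto e^{t\mathnormal{A}}x_{0}$ is bounded, so the closed $\mathnormal{A}$-invariant subspace $Y=\{x_{0}\in\mathcal{H}:\ e^{t\mathnormal{A}}x_{0}\to 0\ \text{as}\ t\to+\infty\}$ is well defined, and the whole statement reduces to proving $Y=\mathcal{H}$. First I would argue by contradiction: if $Y\neq\mathcal{H}$, pass to the quotient $\mathcal{H}/Y$, on which the induced contraction semigroup has no nonzero orbit tending to $0$. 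By reflexivity this quotient semigroup is relatively weakly compact, and the Jacobs--Glicksberg--de Leeuw splitting lets me extract a nontrivial closed invariant subspace on which the semigroup extends to a bounded $C_{0}$-group of isometries -- the reversible (almost-periodic) part.

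The heart of the argument is to show that this reversible part must be $\{0\}$. Its generator $\mathnormal{A}_{0}$ has purely imaginary spectrum, which the construction keeps inside $\sigma(\mathnormal{A})\cap\mathrm{i}\mathbb{R}$, hence countable by hypothesis (2). Here I would invoke the Lyubich--V\~{u} lemma: a $C_{0}$-group of isometries on a nonzero Banach space whose generator has countable spectrum must possess an eigenvalue, because a countable compact spectrum has an isolated point, and an isolated point of the spectrum of such a group generator is necessarily an eigenvalue (a Gelfand / maximum-modulus residue argument applied to the resolvent). Transporting the corresponding eigenvector back to $\mathcal{H}$ produces a pure imaginary eigenvalue of $\mathnormal{A}$, in direct contradiction with hypothesis (1). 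Hence the reversible part is trivial, forcing $Y=\mathcal{H}$ and thus $\lim_{t\to+\infty}\|e^{t\mathnormal{A}}x_{0}\|_{\mathcal{H}}=0$ for every $x_{0}\in\mathcal{H}$.

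The main obstacle is precisely this Lyubich--V\~{u} step -- upgrading ``countably many purely imaginary spectral points, none of them an eigenvalue'' into the genuine vanishing of the reversible part; everything else is soft functional analysis that reflexivity makes routine. An equivalent route I would keep in reserve is Arendt and Batty's original Laplace-transform/Tauberian one: for fixed $x_{0}$ the resolvent $(\lambda I-\mathnormal{A})^{-1}x_{0}$ is holomorphic on $\{\Re\lambda>0\}$ and extends holomorphically across $\mathrm{i}\mathbb{R}$ off the countable closed set $\sigma(\mathnormal{A})\cap\mathrm{i}\mathbb{R}$, after which an Ingham--Karamata-type Tauberian theorem tolerating a countable singular set yields $e^{t\mathnormal{A}}x_{0}\to 0$; the difficulty then migrates to the contour-integration estimate controlling those boundary singularities. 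Finally, for the concrete operator of this paper both hypotheses are cheap to verify: the computation of Section~\ref{sec3} already shows $\sigma(\mathnormal{A})\cap\mathrm{i}\mathbb{R}=\emptyset$, and the compactness of the resolvent makes $\sigma(\mathnormal{A})$ discrete, so that $\sigma(\mathnormal{A})\cap\mathrm{i}\mathbb{R}$ is automatically countable.
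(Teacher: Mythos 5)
The paper offers no proof of this statement to compare against: Theorem \ref{fort} is recalled in the appendix purely as a citation to Arendt--Batty \cite{AW} and is used as a black box in Section \ref{sec3}. So your sketch must be judged against the classical proofs, and it is recognizably the Lyubich--V\~{u} route (your reserve Tauberian route is the authentic Arendt--Batty original). Your closing observation --- that for the paper's operator the compact resolvent makes the spectrum discrete, hence countable on $\mathrm{i}\mathbb{R}$, and that Section \ref{sec3} in fact shows $\sigma(\mathcal{A})\cap \mathrm{i}\mathbb{R}=\emptyset$ --- is exactly how the paper invokes the theorem.

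Two steps in your sketch, however, have genuine gaps. First, the Jacobs--Glicksberg--de Leeuw splitting cannot deliver what you ask of it. On the quotient $\mathcal{H}/Y$ it decomposes the space into the closed span of eigenvectors with purely imaginary eigenvalues (the reversible part) and the flight vectors, for which $0$ is only a \emph{weak cluster point} of the orbit; ``no nonzero orbit tends to $0$ strongly'' therefore does not force the reversible part to be nontrivial. The translation group on $L^{2}(\mathbb{R})$ makes this concrete: it has no eigenvalues, hence trivial reversible part, yet consists of unitaries, so no orbit decays. (It has uncountable boundary spectrum, so it does not contradict the theorem --- but your inference at this point does not yet use countability, which is precisely the problem.) Moreover, since the JdLG reversible part is by construction spanned by eigenvectors, your subsequent isolated-point/Gelfand argument would be vacuous on it. The classical repair is Lyubich--V\~{u}'s limit isometric semigroup: set $\ell(x)=\lim_{t\to\infty}\|e^{t\mathnormal{A}}x\|$ (the limit exists by contractivity), complete $\mathcal{H}/Y$ under $\ell$, and observe that the induced semigroup on the completion $Z$ is isometric; the extension to a group then comes not from weak compactness (the completion need not be reflexive, let alone Hilbert) but from the dichotomy that the generator of a non-invertible isometric semigroup has spectrum equal to the entire closed left half-plane, which countability of $\sigma(B)\cap \mathrm{i}\mathbb{R}\subset\sigma(\mathnormal{A})\cap \mathrm{i}\mathbb{R}$ rules out. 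From there your chain --- Baire gives an isolated point of the nonempty countable \emph{closed} (not necessarily compact) spectrum, the Riesz projection and Gelfand's theorem give an eigenvalue --- is correct.

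Second, ``transporting the eigenvector back to $\mathcal{H}$'' goes the wrong way: the canonical map $j:\mathcal{H}\to Z$ has dense range but is far from invertible, and what an eigenvalue $\mathrm{i}\lambda$ of the limit group actually yields, via $j^{*}$, is a nonzero functional $\phi$ with $\mathnormal{A}^{*}\phi=\mathrm{i}\lambda\phi$ --- an imaginary eigenvalue of the \emph{adjoint}, which is why the general Banach-space form of the theorem assumes $\sigma_{p}(\mathnormal{A}^{*})\cap \mathrm{i}\mathbb{R}=\emptyset$. Hypothesis (1) concerns $\mathnormal{A}$, not $\mathnormal{A}^{*}$, so the contradiction is not immediate as written. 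The gap is repairable in the present setting, and reflexivity --- which you rightly emphasize --- is the key: by the mean ergodic theorem for bounded semigroups on reflexive spaces, $\mathcal{H}=\ker(\mathnormal{A}-\mathrm{i}\lambda)\oplus\overline{\mathrm{ran}(\mathnormal{A}-\mathrm{i}\lambda)}$, so $\ker(\mathnormal{A}-\mathrm{i}\lambda)=\{0\}$ forces $\ker(\mathnormal{A}^{*}-\mathrm{i}\lambda)=\{0\}$; alternatively, for a contraction semigroup on a Hilbert space a unimodular eigenvector of the adjoint is automatically an eigenvector of the semigroup itself. Either patch closes the argument, but as it stands your proof derives a contradiction with a hypothesis the theorem does not state.
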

Concerning the characterization of exponential stability of a C$_{0}$-semigroup of contraction
$\left( e^{t \mathnormal{A}}\right)_{t\geq 0} $ we rely on the following result due to Huang \cite{hu} and Pr$\ddot{u}$ss \cite{pr}.
\begin{theorem}\label{expo}
	Let $\mathnormal{A} : \mathnormal{D}\left( \mathnormal{A} \right) \subset \mathcal{H}\rightarrow  \mathcal{H} $ generate a C$_{0}$-semigroup of contractions $\left( e^{t \mathnormal{A}}\right)_{t\geq 0} $ on $\mathcal{H}$. Assume that $\mathrm{i}\lambda \in \rho(\mathnormal{A})$, $\forall \lambda \in \mathbb{R}$. Then, the C$_{0}$-semigroup $\left( e^{t \mathnormal{A}}\right)_{t\geq 0} $ is exponentially stable if and only if	
	$$
	\sup_{\lambda \in \mathbb{R} } \left\|\left( \mathrm{i} \lambda I-\mathnormal{A}\right) ^{-1} \right\| _{\mathcal{L}(\mathcal{H})}<+\infty.
	$$
\end{theorem}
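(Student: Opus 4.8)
The statement is the Gearhart--Greiner--Pr\"uss--Huang frequency-domain characterization of exponential stability on a Hilbert space, and the plan is to prove the two implications separately. The forward implication is elementary: assuming $\|e^{t\mathnormal{A}}\|_{\mathcal{L}(\mathcal{H})}\le Me^{-\epsilon t}$, I would represent the resolvent as the Laplace transform of the semigroup, $(sI-\mathnormal{A})^{-1}=\int_{0}^{\infty}e^{-st}e^{t\mathnormal{A}}\,dt$, which converges for $\mathrm{Re}(s)>-\epsilon$, and estimate under the integral sign to get $\|(sI-\mathnormal{A})^{-1}\|\le M/(\mathrm{Re}(s)+\epsilon)$. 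Taking $s=\mathrm{i}\lambda$ gives the uniform bound $\sup_{\lambda\in\mathbb{R}}\|(\mathrm{i}\lambda I-\mathnormal{A})^{-1}\|\le M/\epsilon$.

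For the converse, set $M_{0}:=\sup_{\lambda\in\mathbb{R}}\|(\mathrm{i}\lambda I-\mathnormal{A})^{-1}\|<\infty$. I would first propagate the imaginary-axis bound into the half-plane: contractivity yields $\|(sI-\mathnormal{A})^{-1}\|\le 1/\mathrm{Re}(s)$ for $\mathrm{Re}(s)>0$, while a Neumann series for the resolvent about the points $\mathrm{i}\lambda$ gives $\|(sI-\mathnormal{A})^{-1}\|\le 2M_{0}$ on the strip $0\le\mathrm{Re}(s)\le 1/(2M_{0})$; patching these shows $K:=\sup_{\mathrm{Re}(s)\ge 0}\|(sI-\mathnormal{A})^{-1}\|<\infty$. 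By Datko's theorem, exponential stability will follow once I show $t\mapsto e^{t\mathnormal{A}}x\in L^{2}(0,\infty;\mathcal{H})$ for every $x\in\mathcal{H}$. The engine for this is Plancherel's theorem: fixing $\eta>0$ and extending $t\mapsto e^{-\eta t}e^{t\mathnormal{A}}x$ by zero to $\mathbb{R}$, its Fourier transform is $\lambda\mapsto((\eta+\mathrm{i}\lambda)I-\mathnormal{A})^{-1}x$, so that
\begin{equation*}
\int_{0}^{\infty}e^{-2\eta t}\left\|e^{t\mathnormal{A}}x\right\|^{2}\,dt=\frac{1}{2\pi}\int_{\mathbb{R}}\left\|\left((\eta+\mathrm{i}\lambda)I-\mathnormal{A}\right)^{-1}x\right\|^{2}\,d\lambda .
\end{equation*}
For $x\in \mathnormal{D}(\mathnormal{A})$ the identity $((\eta+\mathrm{i}\lambda)I-\mathnormal{A})^{-1}x=(\eta+\mathrm{i}\lambda)^{-1}\big(x+((\eta+\mathrm{i}\lambda)I-\mathnormal{A})^{-1}\mathnormal{A}x\big)$ forces decay of order $|\lambda|^{-1}$ at high frequencies, while $K$ controls the remaining bounded range; hence the right-hand side stays bounded as $\eta\to 0^{+}$, and monotone convergence gives $e^{\cdot\,\mathnormal{A}}x\in L^{2}$ on the dense set $\mathnormal{D}(\mathnormal{A})$.

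The decisive step, and the genuinely Hilbert-space part, is to upgrade this to an estimate $\int_{0}^{\infty}\|e^{t\mathnormal{A}}x\|^{2}\,dt\le C\|x\|^{2}$ that holds for every $x\in\mathcal{H}$ with $C$ uniform, since the $|\lambda|^{-1}$ decay above was purchased with $\|\mathnormal{A}x\|$ and is unavailable for general $x$. I would obtain it by a symmetric duality argument: pairing the identity above against $y$ and using $\big(((\eta+\mathrm{i}\lambda)I-\mathnormal{A})^{-1}\big)^{*}=((\eta-\mathrm{i}\lambda)I-\mathnormal{A}^{*})^{-1}$ transfers the estimate to the adjoint semigroup $e^{t\mathnormal{A}^{*}}$, whose generator satisfies the same resolvent bound $\sup_{\lambda}\|(\mathrm{i}\lambda I-\mathnormal{A}^{*})^{-1}\|=M_{0}$ by duality; combining the two Plancherel identities removes the dependence on $\|\mathnormal{A}x\|$ and yields the uniform $L^{2}$ bound, after which the uniform boundedness principle and Datko's theorem close the argument. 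This uniform vertical-line $L^{2}$ control is exactly the point where the Hilbert structure is indispensable and where the Banach-space analogue fails, so it is the main obstacle of the proof.
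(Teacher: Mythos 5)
First, a point of comparison: the paper does not prove Theorem \ref{expo} at all --- it is recalled in the appendix as a known result, cited to Huang \cite{hu} and Pr\"uss \cite{pr} --- so your attempt can only be measured against the classical arguments. Your forward implication is correct, and the first two stages of your converse are exactly how the standard proofs begin: the Hille--Yosida bound $\|(sI-\mathnormal{A})^{-1}\|\le 1/\Re(s)$ patched with a Neumann series to get $K:=\sup_{\Re(s)\ge 0}\|(sI-\mathnormal{A})^{-1}\|<\infty$, and the vector-valued Plancherel identity, which is indeed where the Hilbert structure enters. The genuine gap is your ``decisive step''. The assertion that ``combining the two Plancherel identities removes the dependence on $\|\mathnormal{A}x\|$'' is not an argument, and both natural readings of it fail. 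If you mean: establish $\int_0^\infty\|e^{t\mathnormal{A}}x\|^2\,dt<\infty$ on the dense subspace $\mathnormal{D}(\mathnormal{A})$ (and dually on $\mathnormal{D}(\mathnormal{A}^*)$) and then invoke the uniform boundedness principle for the truncations $x\mapsto e^{\cdot\mathnormal{A}}x|_{[0,n]}$ --- that fails, because UBP requires pointwise bounds on a nonmeager set, and a proper dense subspace is meager; pointwise square-integrability on a dense set genuinely does not self-improve (consider $(e^{-ts}f(s))_{t\ge0}$ on $L^2(0,1)$: square-integrable in $t$ for a dense set of $f$, not for all $f$). If instead you mean: combine the two Plancherel identities at height $\eta>0$ --- contractivity only yields $\int_{\mathbb{R}}\|((\eta+\mathrm{i}\lambda)I-\mathnormal{A})^{-1}x\|^2\,d\lambda\le\pi\|x\|^2/\eta$, the adjoint gives the same degenerating bound, and duality supplies no mechanism whatsoever for making the constant uniform as $\eta\to 0^{+}$. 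Your $|\lambda|^{-1}$ decay was bought with $\|\mathnormal{A}x\|$, and nothing in the sketch replaces that currency.

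The missing idea is elementary and makes both the duality detour and Datko-on-a-dense-set unnecessary: transfer the $L^2(\mathbb{R}_\lambda)$ bound between vertical lines using the resolvent identity. For $0<\eta\le 1$,
\begin{equation*}
\bigl((\eta+\mathrm{i}\lambda)I-\mathnormal{A}\bigr)^{-1}x=\Bigl(I+(1-\eta)\bigl((\eta+\mathrm{i}\lambda)I-\mathnormal{A}\bigr)^{-1}\Bigr)\bigl((1+\mathrm{i}\lambda)I-\mathnormal{A}\bigr)^{-1}x,
\end{equation*}
so $\|((\eta+\mathrm{i}\lambda)I-\mathnormal{A})^{-1}x\|\le(1+K)\|((1+\mathrm{i}\lambda)I-\mathnormal{A})^{-1}x\|$, while Plancherel on the line $\Re(s)=1$ costs nothing beyond contractivity: $\int_{\mathbb{R}}\|((1+\mathrm{i}\lambda)I-\mathnormal{A})^{-1}x\|^2\,d\lambda=2\pi\int_0^\infty e^{-2t}\|e^{t\mathnormal{A}}x\|^2\,dt\le\pi\|x\|^2$. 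Your own Plancherel identity then gives
\begin{equation*}
\int_{0}^{\infty}e^{-2\eta t}\bigl\|e^{t\mathnormal{A}}x\bigr\|^{2}\,dt\le\frac{(1+K)^{2}}{2}\,\|x\|^{2},
\end{equation*}
uniformly in $\eta\in(0,1]$ and for \emph{every} $x\in\mathcal{H}$; monotone convergence and Datko's theorem finish the proof, with no adjoint, no $\|\mathnormal{A}x\|$, and no UBP. For the record, the genuine duality proof (Gearhart's, as in Engel--Nagel) operates differently from what you describe: it uses the contour representation $e^{t\mathnormal{A}}x=\frac{1}{2\pi\mathrm{i}t}\int_{\Re(s)=0}e^{st}(sI-\mathnormal{A})^{-2}x\,ds$, splits $(sI-\mathnormal{A})^{-2}$ by Cauchy--Schwarz between $x$ and $y$ so that the adjoint resolvent $L^2$ bound carries $y$, and concludes $\|e^{t\mathnormal{A}}\|\le C/t$ directly --- bypassing Datko entirely; but even there, the transfer of $L^2$ bounds to the imaginary axis via the identity above (with the constant $K$) is the indispensable hinge that your sketch omits.
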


Also, concerning the characterization of polynomial stability of a C$_{0}$-semigroup of contraction $\left( e^{t \mathnormal{A}}\right)_{t\geq 0} $ we rely on the following result due to Borichev and Tomilov \cite{BT}.
\begin{theorem}\label{poly}
	Assume that $\mathnormal{A}$ is the generator of a strongly continuous semigroup of contractions $\left( e^{t \mathnormal{A}}\right)_{t\geq 0} $ on $\mathcal{H}$. If $\mathrm{i} \mathbb{R} \subset \rho (\mathnormal{A})$, then for a fixed $\ell>0$ the following conditions are equivalent:
	\begin{equation}\label{A.39} 
	\sup_{\lambda \in \mathbb{R} } \left\|\left( \mathrm{i} \lambda I-\mathnormal{A}\right) ^{-1} \right\| _{\mathcal{L}(\mathcal{H})}=O\left(\left| \lambda \right|^{\ell}  \right) ,
	\end{equation}
	
	\begin{equation}
	\left\|e^{t \mathnormal{A}} U_{0}\right\|^{2}_{\mathcal{H}} \leq \frac{C}{t^{\frac{2}{\ell}}} \left\|U_{0} \right\|^{2}_{\mathnormal{D}(\mathnormal{A})}, \;\; \forall t>0, \; U_{0} \in {\mathnormal{D}(\mathnormal{A})}, \; \text{for some} \; C>0.  
	\end{equation}
\end{theorem}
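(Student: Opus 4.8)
The statement is the Borichev--Tomilov characterization, recalled here as an abstract tool; the plan is to reconstruct its proof rather than cite it. Write $\alpha := \ell$ and use that $0 \in \rho(A)$, so that $A^{-1} \in \mathcal{L}(\mathcal{H})$ and $\|U_0\|_{D(A)} \simeq \|AU_0\|$; setting $U_0 = A^{-1}y$ then shows that (2) is equivalent to the reduced-orbit estimate $\|e^{tA}A^{-1}\|_{\mathcal{L}(\mathcal{H})} = O(t^{-1/\alpha})$. Both implications are driven by three ingredients: the Laplace representation $(z-A)^{-1} = \int_0^\infty e^{-zt}e^{tA}\,dt$ valid for $\Re z > 0$, the a priori half-plane bound $\|(z-A)^{-1}\| \le M/\Re z$ coming from boundedness of the semigroup, and --- crucially --- Plancherel's theorem, which is the sole place where the Hilbert-space hypothesis is used.

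For the easier implication (2)$\Rightarrow$(1), I would pass through the Fourier transform. For $x \in \mathcal{H}$ and $\epsilon > 0$ the truncated orbit $t \mapsto e^{-\epsilon t}e^{tA}A^{-1}x$ (extended by $0$ for $t<0$) has Fourier transform $s \mapsto ((\epsilon+is)-A)^{-1}A^{-1}x$, so Plancherel's identity turns the decay $\|e^{tA}A^{-1}\| = O(t^{-1/\alpha})$ into an $L^2(\mathbb{R};\mathcal{H})$ bound on the resolvent along the lines $\Re z = \epsilon$, uniformly as $\epsilon \downarrow 0$. Combined with the half-plane bound and the algebraic relations tying $(is-A)^{-1}$ to $(is-A)^{-1}A^{-k}$ (obtained by iterating the identity $(z-A)^{-1} = z(z-A)^{-1}A^{-1} - A^{-1}$), a localization in the frequency $s$ upgrades this averaged control to the pointwise growth $\|(is-A)^{-1}\| = O(|s|^\alpha)$, which is (1).

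For the decisive implication (1)$\Rightarrow$(2), I would first promote the boundary bound $\|(is-A)^{-1}\| \le C|s|^\alpha$ to an analytic continuation of the resolvent into a polynomial neighborhood of the imaginary axis, $\Omega = \{z : \Re z > -c(1+|\Im z|)^{-\alpha}\}$, with $\|(z-A)^{-1}\| \le C'(1+|\Im z|)^\alpha$ on $\Omega$; this is a Neumann-series perturbation off the axis, admissible because the allowed distance $c(1+|\Im z|)^{-\alpha}$ is matched to the reciprocal of the boundary bound. I would then represent the reduced orbit by the inverse-Laplace contour integral
\[
e^{tA}A^{-1}y = \frac{1}{2\pi i}\int_\Gamma e^{tz}\,z^{-1}(z-A)^{-1}y\,dz,
\]
deform $\Gamma$ onto $\partial\Omega$, and estimate. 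On the nearly-imaginary part of the contour the integrand is not absolutely integrable once $\alpha \ge 1$, so instead of a crude norm bound I would control that piece in $L^2$ of the time variable by Plancherel and then convert to a pointwise-in-$t$ estimate by a smoothing/interpolation step; balancing the parameter $c$ against $t$ extracts exactly the rate $t^{-1/\alpha}$, the reduced form of (2).

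The main obstacle is precisely this last passage from imaginary-axis resolvent growth to a decay rate in the absence of a spectral gap. Unlike the Gearhart--Pr\"uss (exponential) situation, the resolvent continues only into the shrinking region $\Omega$, not into a vertical strip, and the contour integral over its nearly-imaginary boundary diverges if estimated naively; it is the Hilbert-space Plancherel estimate that rescues it, which is exactly why the pairing $|\lambda|^\alpha \leftrightarrow t^{-1/\alpha}$ is sharp on Hilbert spaces but must be weakened by logarithmic factors on general Banach spaces. The bookkeeping of $\Omega$, the averaged-to-pointwise conversion, and the optimization of the contour against $t$ so as to land exactly on the stated exponent are the delicate points that the plan must carry out with care.
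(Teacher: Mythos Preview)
The paper does not prove this theorem at all: it is stated in the appendix solely as a cited background result from Borichev and Tomilov, and is then invoked as a black box in the proof of Theorem~\ref{portt}. So there is no ``paper's own proof'' to compare against; the paper's approach is simply to quote the literature.

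Your plan, by contrast, is a reconstruction of the Borichev--Tomilov argument itself. The core architecture you describe for the hard implication (1)$\Rightarrow$(2) is faithful to the original: Neumann-series continuation of the resolvent into the shrinking region $\Omega=\{\Re z>-c(1+|\Im z|)^{-\alpha}\}$, inverse-Laplace representation of $e^{tA}A^{-1}$ over the deformed contour $\partial\Omega$, and the use of Plancherel on the near-imaginary part to replace the divergent absolute estimate by an $L^2$-in-time bound. You also correctly flag the conceptual point that this Hilbert-space step is exactly what pins down the sharp exponent $t^{-1/\alpha}$.

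Two places deserve more care before this becomes a proof. First, in (2)$\Rightarrow$(1) you pass from an $L^2(\mathbb{R})$ control of $s\mapsto(\epsilon+is-A)^{-1}A^{-1}x$ to the \emph{pointwise} bound $\|(is-A)^{-1}\|=O(|s|^\alpha)$ via an unspecified ``localization in frequency''; an $L^2$ average alone does not yield a uniform pointwise bound, so you will need either the resolvent identity plus a compactness/contradiction argument, or to iterate to $A^{-n}$ with $n$ large enough that the Laplace integral converges absolutely and then undo the extra powers of $A^{-1}$ algebraically. Second, in (1)$\Rightarrow$(2) the ``averaged-to-pointwise conversion'' after Plancherel and the optimisation of the contour parameter $c$ against $t$ are exactly where the exponent is manufactured; your outline names these steps but does not indicate the mechanism (in the original, one combines the $L^2$ contour estimate with a derivative bound obtained from the semigroup property, or splits the contour at height $\asymp t^{1/\alpha}$). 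These are execution gaps rather than wrong ideas, but they are precisely the steps that separate a heuristic from a proof.
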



\begin{thebibliography}{99}
	
	
	
	
	
	\bibitem{AHh}{Akil M, Badawi H, Nicaise S, Wehbe A} (2021)  \emph{Stability results of coupled wave models with locally memory in
	a past history framework via nonsmooth coefficients on the interface}. Math Methods Appl Sci 44(8):6950-6981

    \bibitem{Abn}{Akil M, Badawi H, Nicaise S} { Stability results of locally coupled wave equations with local Kelvin-Voigt damping: Cases when the supports of damping and coupling coefficients are disjoint}. Computational and Applied Mathematics. (2022).
    

    \bibitem{A}{Akil M, Badawi H,Wehbe A} {(2021) Stability results of a singular local interaction elastic$\backslash$viscoelastic coupled
    wave equations with time delay. Commun Pure Appl Anal 20(9):2991–3028}
     
     \bibitem{AA}{Akil, M., Badawi, H., Nicaise, S., R$\acute{e}$gnier, V.,} { Stabilization of Coupled Wave Equations with Viscous Damping on Cylindrical and Non-regular Domains: Cases Without the Geometric Control Condition}. Mediterr. J. Math. 19, 271 (2022)
     
     \bibitem{AG}{Akil, M., Ghader, M.  Wehbe, A.} { The influence of the coefficients of a system of wave equations coupled by velocities on its stabilization. }SeMA 78, 287–333 (2021).
     
     \bibitem{exp}{Akil. M, Hajjej.Z}, {Exponential Stability and exact controllability of a system of coupled wave equations by second order terms (via Laplacian)  with only one non-smooth local  damping }, Mathematical Methods in the applied sciences 
     
    \bibitem{AAI}{ Akil, M., Issa, I. Wehbe, A}. {A N-dimensional elastic$\backslash$viscoelastic transmission problem with Kelvin-Voigt damping and non smooth coefficient at the interface.} SeMA 80, 425–462 (2023)

    \bibitem{Aak}{Akil, M., Wehbe, A.}:{ Stabilization of multidimensional wave equation with locally boundary fractional
    dissipation law under geometric conditions}. Math. Control Relat. Fields 8, 1–20 (2018)

    \bibitem{Amm}{Akil M, Wehbe A.} { Indirect stability of a multidimensional coupled wave equations with one locally boundary fractional damping}; 2021.

    \bibitem{Aaw}{Ali Wehbe, N.N., Nasser,R.}: {Stability of n-d transmission problem in viscoelasticity with localized kelvin-voigt
    damping under different types of geometric conditions.}Math. Control Relat. Fields 11(4), 885–904 (2021)
    
    \bibitem{Awy}{A. Wehbe and W. Youssef.} \emph{Indirect locally internal observability of weakly coupled wave equations,} Differential
    Equations and Applications-DEA, Vol. 3, No. 3, (2011), 449-462.
    
    
    \bibitem{PA}{A.Pazy},  \emph{Semigroups of Linear Operators and Applications to Partial Differential Equations}. Applied Mathematical Sciences, 44. Springer-Verlag, New York, 1983.
    
    
    \bibitem{AW}{Arendt, W., Batty, C.J.K.}.  \emph{Tauberian theorems and stability of one-parameter semigroups.}.Trans. Am.
    Math. Soc. 306(2), 837–852 (1988)
    

	\bibitem{BT}{Borichev, A., Tomilov, Y}.  \emph{Optimal polynomial decay of functions and operator semigroups.} Math. Annal.347(2), 455–478 (2009)
	
	\bibitem{f1}{Fatiha Alabau-Boussouira}, {A Two-Level Energy Method for Indirect Boundary Observability and Controllability of Weakly Coupled Hyperbolic Systems}
	Mathematicc SIAM J. Control. Optim. 2003
	
	\bibitem{f2}{Fatiha Alabau-Boussouira}, {Indirect Boundary Stabilization of Weakly Coupled Hyperbolic Systems}
	Mathematics SIAM J. Control. Optim.
	2002
	
	\bibitem{f3}{Fatiha Alabau-Boussouira, Matthieu Léautaud}, {Indirect controllability of locally coupled wave-type systems and applications}. Mathematics 2013
	
	
	\bibitem{HAS}{Hayek, A., Nicaise, S., Salloum, Z., Wehbe, A.:}{ A transmission problem of a system of weakly coupled wave equations with kelvin-voigt dampings and non-smooth coefficient at the interface. SeMA J. 77(3), 305–338 (2020)}
	
	
	\bibitem{AH}{Haraux, A}, \emph{ Two remarks on dissipative hyperbolic problems, in: Research Notes in Mathematics,} vol. \textbf{122}, Pitman, Boston, MA,pp. 161-179 (1985)
	
	
	\bibitem{hu}{Huang, F.L.} \emph{Characteristic conditions for exponential stability of linear dynamical systems in Hilbert spaces.} Ann. Differ. Equ. 1(1), 43–56 (1985)
	
	\bibitem{hs}{Hassine, F., Souayeh, N.:} \emph{Stability for coupled waves with locally disturbed Kelvin–Voigt damping.} arXiv:1909.09838 (2019)
	
	\bibitem{IB}{I. Boutaayamou, G. Fragnelli, D. Mugnai,} { Boundary controllability for a
	degenerate wave equation in non divergence form with drift}, submitted, 33
	pages, arXiv:2109.1253.
	
	\bibitem{lab1}{Lebeau, G.:} { équations des ondes amorties. Séminaire Équations aux dérivées partielles (Polytechnique)},
	1993–1994. talk:15
	
	
	\bibitem{lab2}{Lebeau, G.:} {Équation des ondes amorties. Algebraic and Geometric Methods in Mathematical Physics},(Kaciveli, 1993), Volume 19 of Mathematical Physics Studies. Kluwer Academic Publishers, Dordrecht
	(1996)
	
	\bibitem{Lio}{Lions, J. L}, \emph{Quelques methodes de resolution des problemes aux limites non lineaires},
	Dunod, Paris (in French) (1969)
	
	\bibitem{Lk}{Liu, K.:} {Locally distributed control and damping for the conservative systems. SIAM J. Control Optim.
	35(5), 1574–1590 (1997)}
	
	\bibitem{Lz}{Liu, Z., Rao, B.:} {Characterization of polynomial decay rate for the solution of linear evolution equation.
	Z. Angew. Math. Phys. 56(4), 630–644 (2005)}

    \bibitem{Aa}{M. Akil}, {Stability of piezoelectric beam with magnetic effect under (coleman or pipkin)–gurtin thermal law.} Zeitschrift f$\ddot{u}$r angewandte Mathematik und Physik, 73(6):236, 2022.

   \bibitem{Naj}{ Najdi, N.}: {Study of the exponential and polynomial stability of some systems of coupled equations with
    indirect bounded or unbounded control}. PhD thesis, 07 (2016)


    \bibitem{Nass}{Nasser, R., Noun, N., Wehbe, A.:} { Stabilization of the wave equations with localized Kelvin-Voigt type damping under optimal geometric conditions.} C.R. Math. 357(3), 272–277 (2019)
	
	\bibitem{NN}{Nicaise, S., Pignotti, C.:} { Stability of the wave equation with localized Kelvin-Voigt damping and boundary
	delay feedback.} Discret. Contin. Dyn. Syst. Ser. S 9(3), 791–813 (2016)
	
	\bibitem{pr}{Pr$\ddot{u}$ss, J.}  \emph{On the spectrum of C0-semigroups.}Trans. Am. Math. Soc. 284(2), 847–857 (1984)
	
	\bibitem{SCA}{Stéphane Gerbi, Chiraz Kassem, Amina Mortada, et al.} {“Exact Controllability
		and Stabilization of Locally Coupled Wave Equations: Theoretical Results”.} In:
	Zeitschrift für Analysis und ihre Anwendungen 40 (Jan. 2021), pp. 67–96.
	
	
	\bibitem{Te1}{Tebou. L},  \emph{Stabilization of some coupled hyperbolic/parabolic equations,} Dynamical systems, Vol  \textbf{14},  N0, 4, pp. 1601-1620 (2010)
	
	\bibitem{Te2}{Tebou, L.:} {A constructive method for the stabilization of the wave equation with localized kelvin-voigt damping.} Comptes Rendus Math. 350, 603–608 (2012)
	

	\bibitem{AIW} { Wehbe, A., Issa, I., Akil, M.:}, {Stability results of an elastic/viscoelastic transmission problem of locally coupled waves with non smooth coefficients}. Acta Appl.Math. 171(1), 1-46 (2021).
    
    
    \bibitem{ZZ}{Z. Liu and S. Zheng}.  \emph{ Semigroups Associated with Dissipative systems,}.398 Research Notes in mathematics, Champman and Hall/CRC.
    
    
     
     
     
    
\end{thebibliography}
\end{document}